\newwrite\@unused
\def\typeout#1{{\let\protect\string\immediate\write\@unused{#1}}}
\def\@nnil{\@nil}
\def\@empty{}
\def\@psdonoop#1\@@#2#3{}
\def\@psdo#1:=#2\do#3{\edef\@psdotmp{#2}\ifx\@psdotmp\@empty \else
    \expandafter\@psdoloop#2,\@nil,\@nil\@@#1{#3}\fi}
\def\@psdoloop#1,#2,#3\@@#4#5{\def#4{#1}\ifx #4\@nnil \else
       #5\def#4{#2}\ifx #4\@nnil \else#5\@ipsdoloop #3\@@#4{#5}\fi\fi}
\def\@ipsdoloop#1,#2\@@#3#4{\def#3{#1}\ifx #3\@nnil 
       \let\@nextwhile=\@psdonoop \else
      #4\relax\let\@nextwhile=\@ipsdoloop\fi\@nextwhile#2\@@#3{#4}}
\def\@tpsdo#1:=#2\do#3{\xdef\@psdotmp{#2}\ifx\@psdotmp\@empty \else
    \@tpsdoloop#2\@nil\@nil\@@#1{#3}\fi}
\def\@tpsdoloop#1#2\@@#3#4{\def#3{#1}\ifx #3\@nnil 
       \let\@nextwhile=\@psdonoop \else
      #4\relax\let\@nextwhile=\@tpsdoloop\fi\@nextwhile#2\@@#3{#4}}
\def\psdraft{
	\def\@psdraft{0}
}
\def\psfull{
	\def\@psdraft{100}
}
\newif\if@prologfile
\newif\if@postlogfile
\newif\if@noisy
\def\pssilent{
	\@noisyfalse
}
\def\psnoisy{
	\@noisytrue
}
\newif\if@bbllx
\newif\if@bblly
\newif\if@bburx
\newif\if@bbury
\newif\if@height
\newif\if@width
\newif\if@rheight
\newif\if@rwidth
\newif\if@clip
\newif\if@verbose
\def\@p@@sclip#1{\@cliptrue}
\def\@p@@sfile#1{
		   \def\@p@sfile{#1}
}
\def\@p@@sfigure#1{\def\@p@sfile{#1}}
\def\@p@@sbbllx#1{
		\@bbllxtrue
		\dimen100=#1
		\edef\@p@sbbllx{\number\dimen100}
}
\def\@p@@sbblly#1{
		\@bbllytrue
		\dimen100=#1
		\edef\@p@sbblly{\number\dimen100}
}
\def\@p@@sbburx#1{
		\@bburxtrue
		\dimen100=#1
		\edef\@p@sbburx{\number\dimen100}
}
\def\@p@@sbbury#1{
		\@bburytrue
		\dimen100=#1
		\edef\@p@sbbury{\number\dimen100}
}
\def\@p@@sheight#1{
		\@heighttrue
		\dimen100=#1
   		\edef\@p@sheight{\number\dimen100}
}
\def\@p@@swidth#1{
		\@widthtrue
		\dimen100=#1
		\edef\@p@swidth{\number\dimen100}
}
\def\@p@@srheight#1{
		\@rheighttrue
		\dimen100=#1
		\edef\@p@srheight{\number\dimen100}
}
\def\@p@@srwidth#1{
		\@rwidthtrue
		\dimen100=#1
		\edef\@p@srwidth{\number\dimen100}
}
\def\@p@@ssilent#1{ 
		\@verbosefalse
}
\def\@p@@sprolog#1{\@prologfiletrue\def\@prologfileval{#1}}
\def\@p@@spostlog#1{\@postlogfiletrue\def\@postlogfileval{#1}}
\def\@cs@name#1{\csname #1\endcsname}
\def\@setparms#1=#2,{\@cs@name{@p@@s#1}{#2}}
\def\ps@init@parms{
		\@bbllxfalse \@bbllyfalse
		\@bburxfalse \@bburyfalse
		\@heightfalse \@widthfalse
		\@rheightfalse \@rwidthfalse
		\def\@p@sbbllx{}\def\@p@sbblly{}
		\def\@p@sbburx{}\def\@p@sbbury{}
		\def\@p@sheight{}\def\@p@swidth{}
		\def\@p@srheight{}\def\@p@srwidth{}
		\def\@p@sfile{}
		\def\@p@scost{10}
		\def\@sc{}
		\@prologfilefalse
		\@postlogfilefalse
		\@clipfalse
		\if@noisy
			\@verbosetrue
		\else
			\@verbosefalse
		\fi
}
\def\parse@ps@parms#1{
	 	\@psdo\@psfiga:=#1\do
		   {\expandafter\@setparms\@psfiga,}}
\newif\ifno@bb
\newif\ifnot@eof
\newread\ps@stream
\def\bb@missing{
	\if@verbose{
		\typeout{psfig: searching \@p@sfile \space  for bounding box}
	}\fi
	\openin\ps@stream=\@p@sfile
	\no@bbtrue
	\not@eoftrue
	\catcode`\%=12
	\loop
		\read\ps@stream to \line@in
		\global\toks200=\expandafter{\line@in}
		\ifeof\ps@stream \not@eoffalse \fi
		\@bbtest{\toks200}
		\if@bbmatch\not@eoffalse\expandafter\bb@cull\the\toks200\fi
	\ifnot@eof \repeat
	\catcode`\%=14
}	
\newif\if@bbmatch
\def\@bbtest#1{\expandafter\@a@\the#1
\long\def\@a@#1
\long\def\bb@cull#1 #2 #3 #4 #5 {
	\dimen100=#2 bp\edef\@p@sbbllx{\number\dimen100}
	\dimen100=#3 bp\edef\@p@sbblly{\number\dimen100}
	\dimen100=#4 bp\edef\@p@sbburx{\number\dimen100}
	\dimen100=#5 bp\edef\@p@sbbury{\number\dimen100}
	\no@bbfalse
}
\catcode`\%=14
\def\compute@bb{
		\no@bbfalse
		\if@bbllx \else \no@bbtrue \fi
		\if@bblly \else \no@bbtrue \fi
		\if@bburx \else \no@bbtrue \fi
		\if@bbury \else \no@bbtrue \fi
		\ifno@bb \bb@missing \fi
		\ifno@bb \typeout{FATAL ERROR: no bb supplied or found}
			\no-bb-error
		\fi
		\count203=\@p@sbburx
		\count204=\@p@sbbury
		\advance\count203 by -\@p@sbbllx
		\advance\count204 by -\@p@sbblly
		\edef\@bbw{\number\count203}
		\edef\@bbh{\number\count204}
}
%
%
\def\in@hundreds#1#2#3{\count240=#2 \count241=#3
		     \count100=\count240	
		     \divide\count100 by \count241
		     \count101=\count100
		     \multiply\count101 by \count241
		     \advance\count240 by -\count101
		     \multiply\count240 by 10
		     \count101=\count240	
		     \divide\count101 by \count241
		     \count102=\count101
		     \multiply\count102 by \count241
		     \advance\count240 by -\count102
		     \multiply\count240 by 10
		     \count102=\count240	
		     \divide\count102 by \count241
		     \count200=#1\count205=0
		     \count201=\count200
			\multiply\count201 by \count100
		 	\advance\count205 by \count201
		     \count201=\count200
			\divide\count201 by 10
			\multiply\count201 by \count101
			\advance\count205 by \count201
		     \count201=\count200
			\divide\count201 by 100
			\multiply\count201 by \count102
			\advance\count205 by \count201
		     \edef\@result{\number\count205}
}
\def\compute@wfromh{
		\in@hundreds{\@p@sheight}{\@bbw}{\@bbh}
		\edef\@p@swidth{\@result}
}
\def\compute@hfromw{
		\in@hundreds{\@p@swidth}{\@bbh}{\@bbw}
		\edef\@p@sheight{\@result}
}
\def\compute@handw{
		\if@height 
			\if@width
			\else
				\compute@wfromh
			\fi
		\else 
			\if@width
				\compute@hfromw
			\else
				\edef\@p@sheight{\@bbh}
				\edef\@p@swidth{\@bbw}
			\fi
		\fi
}
\def\compute@resv{
		\if@rheight \else \edef\@p@srheight{\@p@sheight} \fi
		\if@rwidth \else \edef\@p@srwidth{\@p@swidth} \fi
}
%
\def\compute@sizes{
	\compute@bb
	\compute@handw
	\compute@resv
}
%
%
\def\psfig#1{\vbox {
	%
	\ps@init@parms
	\parse@ps@parms{#1}
	\compute@sizes
	\ifnum\@p@scost<\@psdraft{
		\if@verbose{
			\typeout{psfig: including \@p@sfile \space }
		}\fi
		\special{ps::[begin] 	\@p@swidth \space \@p@sheight \space
				\@p@sbbllx \space \@p@sbblly \space
				\@p@sbburx \space \@p@sbbury \space
				startTexFig \space }
		\if@clip{
			\if@verbose{
				\typeout{(clip)}
			}\fi
			\special{ps:: doclip \space }
		}\fi
		\if@prologfile
		    \special{ps: plotfile \@prologfileval \space } \fi
		\special{ps: plotfile \@p@sfile \space }
		\if@postlogfile
		    \special{ps: plotfile \@postlogfileval \space } \fi
		\special{ps::[end] endTexFig \space }
		\vbox to \@p@srheight true sp{
			\hbox to \@p@srwidth true sp{
				\hss
			}
		\vss
		}
	}\else{
		\vbox to \@p@srheight true sp{
		\vss
			\hbox to \@p@srwidth true sp{
				\hss
				\if@verbose{
					\@p@sfile
				}\fi
				\hss
			}
		\vss
		}
	}\fi
}}
\catcode`\@=12\relax



\sloppy


\begin{document}

\newtheorem{cor}{Corollary}[section]
\newtheorem{theorem}[cor]{Theorem}
\newtheorem{prop}[cor]{Proposition}
\newtheorem{lemma}[cor]{Lemma}
\theoremstyle{definition}
\newtheorem{defi}[cor]{Definition}
\theoremstyle{remark}
\newtheorem{remark}[cor]{Remark}
\newtheorem{example}[cor]{Example}

\newcommand{\cD}{{\mathcal D}}
\newcommand{\FF}{{\mathcal F}}
\newcommand{\cH}{{\mathcal H}}
\newcommand{\cL}{{\mathcal L}}
\newcommand{\cM}{{\mathcal M}}
\newcommand{\cT}{{\mathcal T}}
\newcommand{\cML}{{\mathcal M\mathcal L}}
\newcommand{\cGH}{{\mathcal G\mathcal H}}
\newcommand{\C}{{\mathbb C}}
\newcommand{\D}{{\mathbb D}}
\newcommand{\N}{{\mathbb N}}
\newcommand{\R}{{\mathbb R}}
\newcommand{\Z}{{\mathbb Z}}
\newcommand{\Kt}{\tilde{K}}
\newcommand{\Mt}{\tilde{M}}
\newcommand{\dr}{{\partial}}
\newcommand{\tr}{\mbox{tr}}
\newcommand{\hol}{\mbox{Hol}}
\newcommand{\isom}{\mbox{Isom}}
\newcommand{\isomz}{\mbox{Isom}_{0,+}}
\newcommand{\vect}{\mbox{Vect}}
\newcommand{\kappab}{\overline{\kappa}}
\newcommand{\pib}{\overline{\pi}}
\newcommand{\Sigmab}{\overline{\Sigma}}
\newcommand{\gd}{\dot{g}}
\newcommand{\diff}{\mbox{Diff}}
\newcommand{\dev}{\mbox{dev}}
\newcommand{\devb}{\overline{\mbox{dev}}}
\newcommand{\devt}{\tilde{\mbox{dev}}}
\newcommand{\vol}{\mbox{Vol}}
\newcommand{\hess}{\mbox{Hess}}
\newcommand{\db}{\overline{\partial}}
\newcommand{\gammab}{\overline{\gamma}}
\newcommand{\Sigmat}{\tilde{\Sigma}}
\newcommand{\mut}{\tilde{\mu}}
\newcommand{\phit}{\tilde{\phi}}

\newcommand{\cunc}{{\mathcal C}^\infty_c}
\newcommand{\cun}{{\mathcal C}^\infty}
\newcommand{\dd}{d_D}
\newcommand{\dmin}{d_{\mathrm{min}}}
\newcommand{\dmax}{d_{\mathrm{max}}}
\newcommand{\Dom}{\mathrm{Dom}}
\newcommand{\dn}{d_\nabla}
\newcommand{\ded}{\delta_D}
\newcommand{\delmin}{\delta_{\mathrm{min}}}
\newcommand{\delmax}{\delta_{\mathrm{max}}}
\newcommand{\hmin}{H_{\mathrm{min}}}
\newcommand{\maxi}{\mathrm{max}}
\newcommand{\oL}{\overline{L}}
\newcommand{\oP}{{\overline{P}}}
\newcommand{\Ran}{\mathrm{Ran}}
\newcommand{\tgamma}{\tilde{\gamma}}
\newcommand{\cotan}{\mbox{cotan}}
\newcommand{\lambdat}{\tilde\lambda}
\newcommand{\St}{\tilde S}

\newcommand{\II}{I\hspace{-0.1cm}I}
\newcommand{\III}{I\hspace{-0.1cm}I\hspace{-0.1cm}I}
\newcommand{\HSt}{\tilde{\operatorname{HS}}}
\newcommand{\note}[1]{\marginpar{\tiny #1}}

\newcommand{\op}{\operatorname}

\newcommand{\AdS}{\operatorname{AdS}}
\newcommand{\uAdS}{\widetilde{\operatorname{AdS}}}
\newcommand{\dS}{\operatorname{dS}}
\newcommand{\HH}{\mathbb H}
\newcommand{\PP}{\mathbb P}
\newcommand{\RR}{\mathbb R}
\newcommand{\uRP}{\widetilde{\R\PP}^1}
\newcommand{\rp}{\R\PP}
\newcommand{\HS}{\operatorname{HS}}
\newcommand{\SO}{\operatorname{SO}}
\newcommand{\cF}{\operatorname{\mathcal{F}}}
\newcommand{\kD}{\mathfrak{D}}

\title[Collisions of particles]{Collisions of particles in locally AdS spacetimes I\\
Local description and global examples}

\author[]{Thierry Barbot}
\address{Laboratoire d'analyse non lin\'eaire et g\'eom\'etrie\\
Universit\'e d'Avignon et des pays de Vaucluse\\
33, rue Louis Pasteur\\
F-84 018 AVIGNON
}
\email{thierry.barbot@univ-avignon.fr}
\author[]{Francesco Bonsante}
\address{Dipartimento di Matematica dell'Universit\`a degli Studi di Pavia,
via Ferrata 1, 27100 Pavia (ITALY)}
\email{francesco.bonsante@unipv.it}
\author[]{Jean-Marc Schlenker}
\address{Institut de Math\'ematiques de Toulouse, 
UMR CNRS 5219 \\
Universit\'e Paul Sabatier\\
31062 Toulouse Cedex 9}
\email{schlenker@math.univ-toulouse.fr}
\thanks{T. B. and F. B. were partially supported by CNRS, ANR GEODYCOS. J.-M. S. was
partially supported by the A.N.R. programs RepSurf,
ANR-06-BLAN-0311, GeomEinstein, 06-BLAN-0154, and ETTT, 2009-2013}

\keywords{Anti-de Sitter space, singular spacetimes, BTZ black hole}
\subjclass{83C80 (83C57), 57S25}
\date{\today}

\begin{abstract}
We investigate 3-dimensional globally hyperbolic AdS manifolds containing ``particles'',
i.e., cone singularities along a graph $\Gamma$. We impose physically relevant conditions on
the cone singularities, e.g. positivity of mass (angle less than $2\pi$ on time-like
singular segments). We construct examples of such manifolds, describe the cone singularities
that can arise and the way they can interact (the local geometry near the vertices of $\Gamma$).
We then adapt to this setting some notions like global hyperbolicity which are natural
for Lorentz manifolds, and construct some examples of globally hyperbolic AdS manifolds
with interacting particles.
\end{abstract}

\maketitle

\tableofcontents

\section{Introduction}

\subsection{Three-dimensional cone-manifolds}

The 3-dimensional hyperbolic space can be defined as a quadric in the 4-dimensional
Minkowski space:
$$ H^3 = \{ x\in \R^{3,1}~|~\langle x,x\rangle = -1 \& x_0>0 \}~. $$
Hyperbolic manifolds, which are manifolds with a Riemannian metric locally isometric
to the metric on $H^3$, have been a major focus of attention for modern geometry. 

More recently attention has turned to hyperbolic cone-manifolds, which are the 
types of singular hyperbolic manifolds that one can obtain by gluing isometrically
the faces of hyperbolic polyhedra. Three-dimensional hyperbolic cone-manifolds 
are singular along lines, and at ``vertices'' where three or more singular
segments intersect. The local geometry at a singular vertex is determined by
its {\it link}, which is a spherical surface with cone singularities.
Among key recent results on hyperbolic cone-manifolds are rigidity results 
\cite{HK,mazzeo-montcouquiol,weiss:09} as well as many applications to three-dimensional
geometry (see e.g. \cite{bromberg1,brock-bromberg-evans-souto}).

\subsection{AdS manifolds}

The three-dimensional anti-de Sitter (AdS) space can be defined, similarly
as $H^3$, as a quadric in the 4-dimensional flat space of signature $(2,2)$:
$$ AdS_3 = \{ x\in \R^{2,2}~|~\langle x,x\rangle = -1\}~. $$
It is a complete Lorentz space of constant curvature $-1$, with fundamental
group $\Z$.

AdS geometry provides in certain ways a Lorentz analog of hyperbolic geometry,
a fact mostly discovered by Mess (see \cite{mess,mess-notes}). In particular,
the so-called {\it globally hyperbolic} AdS 3-manifolds are in key ways 
analogs of quasifuchsian hyperbolic 3-manifolds. Among the striking singularities
one can note an analog of the Bers double uniformization theorem for globally
hyperbolic AdS manifolds, or a similar description of the convex core and of
its boundary. Three-dimensional AdS geometry, like 3-dimensional hyperbolic
geometry, has some deep relationships with Teichm\"uller theory (see e.g.
\cite{mess,mess-notes,cone,bks,earthquakes,maximal}).

Lorentz manifolds have often been studied for reasons related to physics and
in particular gravitation. In three dimensions, Einstein metrics are the same
as constant curvature metrics, so constant curvature 3-dimensional Lorentz
manifolds -- and in particular AdS manifolds -- are 3-dimensional models of 
gravity in four dimensions. From this point of view, cone singularities 
have been extensively used to model point particles, see e.g. \cite{thooft1,thooft2}.

The goal pursued here is to start a geometric study of 3-dimensional AdS manifolds
with cone singularities. We will in particular
\begin{itemize}
\item describe the possible ``particles'', or 
cone singularities along a singular line,
\item describe the singular vertices -- the way those 
``particles'' can ``interact'',
\item show that classical notions like global hyperbolicity can be extended
to AdS cone-manifolds,
\item give examples of globally hyperbolic AdS particles with ``interesting'' 
particles and particle interactions.
\end{itemize}
We outline in more details those main contributions below.

\subsection{A classification of cone singularities along lines}

We start in Section \ref{ssc:singular} an analysis of the possible local geometry near a
singular point. For hyperbolic cone-manifold this local geometry is 
described by the {\it link} of the point, which is a spherical surface
with cone singularities. In the AdS setting there is an analog notion of
link, which is now what we call a singular {\it $HS$-surface}, that is, a
surface with a geometric structure locally modelled on the space of 
rays starting from a point in $\R^{2,1}$ (see Section \ref{sub:HSsingular}).

We then describe the possible geometry in the neighborhood of a point on
a singular segment (Proposition~\ref{pro:classising}).
For hyperbolic cone-manifolds, this local description is quite simple: 
there is only one possible local model, depending on only one parameter, the angle.
For AdS cone-manifolds -- or more generally cone manifolds with a constant 
curvature Lorentz metric -- the situation is more complicated, and cone 
singularities along segments can be of different types. For instance it is
clear that the fact that the singular segment is space-like, time-like or
light-like should play a role.

There are two physically natural restrictions which appear in 
this section. The first is the {\it degree} of a cone singularity along
a segment $c$: the 
number of connected components of time-like vectors in the normal bundle
of $c$ (Section~\ref{sub:HSsingularity}). In the ``usual'' situation where each point has a past and a 
future, this degree is equal to $2$. We restrict our study to the case
where the degree is at most equal to $2$. There are many interesting
situations where this degree can be strictly less than $2$, see below.

The second condition (see Section~\ref{sub.futpast}) is that each point should have a neighborhood containing
no closed causal curve -- also physically relevant since closed causal curves
induce causality violations. AdS manifolds with cone singularities satisfying
those two conditions are called {\it causal} here.
We classify and describe all cone singularities along segments
in causal AdS manifolds with cone singularities, and provide a short description
of each kind. They are called here: massive particles, tachyons, Misner
singularities, BTZ-like singularities, and light-like and extreme BTZ-like
singularities.

We also define a notion of {\it positivity} for those cone singularities 
along lines. Heuristically, positivity means that those geodesics tend 
to ``converge'' along those cone singularitites; for instance, for a 
``massive particle'' -- a cone singularity along a time-like singularity --
positivity means that the angle should be less than $2\pi$, and it 
corresponds physically to the positivity of mass.

\subsection{Interactions and convex polyhedra}

In Section \ref{sc:polyhedra} we turn our attention to the vertices of the singular locus of AdS
manifolds with cone singularities, in other terms the ``interaction points''
where several ``particles'' -- cone singularities along lines -- meet and
``interact''. The construction of the link as an $HS$-surface, in Section 
\ref{ssc:singular}, 
means that we need to understand the geometry of singular $HS$-surfaces. 
The singular lines arriving at an interaction point $p$ correspond to the
singular points of the link of $p$. An important point is that the positivity of the
singular lines arriving at $p$, and the absence of closed causal curves near $p$,
can be read directly on the link, this leads to a natural notion of {\it causal}
singular $HS$-surface, those causal singular $HS$-surfaces are precisely those
occuring as links of interaction points in causal singular AdS manifolds. 

The first point of Section \ref{sc:polyhedra} is the construction of many examples of positive causal
singular $HS$-surfaces from convex polyhedra in $\HS^3$, the natural analog of
$\HS^2$ in one dimension higher. Given a convex polyhedron in $\HS^3$ one can
consider the induced geometric structure on its boundary, and it is often an
$HS$-structure and without closed causal curve. Moreover the positivity 
condition is always satisfied. This makes it easy to visualize many examples
of causal $HS$-structures, and should therefore help in following the arguments
used in Section \ref{sec.classificationHS} to classify causal $HS$-surfaces.

However the relation between causal $HS$-surfaces and convex polyhedra is
perhaps deeper than just a convenient way to construct examples. This is
indicated in Theorem \ref{tm:converse}, which shows that all $HS$-surfaces
having some topological properties (those which are ``causally regular'') 
are actually obtained as induced on a unique convex polyhedron in $\HS^3$.

\subsection{A classification of $HS$-structures}

Section \ref{sec.classificationHS} contains a classification of causal $HS$-structures, or, in other
terms, of interaction points in causal singular AdS manifolds. The main 
result is Theorem \ref{tm:thierry}, which describes what types of interactions
can, or cannot, occur. The strinking point is that there are geometric 
restrictions on what kind of singularities along segments can interact at
one point. 

\subsection{Global hyperbolicity}

In Section \ref{sc:hyperbolicity} we consider singular AdS manifolds globally. We first
extend to this setting the notion of global hyperbolicity which plays
an important role in Lorentz geometry.

A key result for non-singular AdS manifolds is the existence, for any
globally hyperbolic manifold $M$, of a unique maximal globally hyperbolic
extension. We prove a similar result in the singular context (see
Proposition \ref{pr:extension1} and Proposition \ref{pr:extension2}). However this maximal
extension is unique only under the condition that the extension does not contain more
interactions than $M$.

\subsection{Construction of global examples}

Finally Section \ref{sc:examples} is intented to convince the reader that the general considerations
on globally hyperbolic AdS manifolds with interacting particles are not empty: 
it contains several examples, constructed using basically two methods. 

The first
relies again on 3-dimensional polyhedra, but not used in the same way as in Section 
\ref{sc:polyhedra}:
here we glue their faces isometrically so as to obtain cone singularities along the
edges, and interactions points at the vertices. The second method is based on 
surgery: we show that, in many situations, it is possible to excise a tube in
an AdS manifolds with non-interacting particles (like those arising in \cite{cone})
and replace it by a more interesting tube containing an interaction point.

\section{Preliminaries}

\subsection{$(G,X)$-structures}

Let $G$ be a Lie group, and $X$ an analytic space on which $G$ acts analytically and faithfully.
In this paper, we are essentially concerned with the case where $X=\AdS_3$ and $G$ its isometry group, but we will also
consider other pairs $(G,X)$.

A \textit{$(G,X)$-structure} on a manifold $M$ is a covering of $M$ by open sets with
homeomorphisms into $X$, such that the transition maps on the overlap of any
two sets are in $G$. A \textit{$(G,X)$-manifold} is a manifold equipped with a $(G,X)$-structure.
Observe that if $\tilde{X}$ denotes the universal covering of $X$, and $\tilde{G}$ the universal covering
of $G$, any $(G,X)$-structure defines a unique $(\tilde{G}, \tilde{X})$-structure, and, conversely,
any $(\tilde{G}, \tilde{X})$-structure defines a unique $(G,X)$-structure.

A $(G,X)$-manifold is characterized by its \textit{developing map} $\cD: \widetilde{M} \to X$ (where $\widetilde{M}$
denotes the universal covering of $M$) and the holonomy representation $\rho: \pi_1(M) \to G$.
This representation determines through the action of $G$ on $X$ a representation, still
denoted by $\rho$, of $\pi_1(M)$ on $X$.
The map $\cD$ is a local isometry, \textit{\textit{i.e.}} if expressed in local coordinates
provided by the lifted $(G,X)$-structure,
it is the restriction of an element of $G$. Moreover, it is a local homeomorphism, and also
$\pi_1(M)$-equivariant (where the action of $\pi_1(M)$ on $\widetilde{M}$ is the action
by deck transformations).

For more details, we refer to the recent expository paper \cite{goldmanexp}, 
or to the book \cite{carlip} oriented towards a physics audience.

\subsection{Background on the AdS space}
\label{sub:backgroundads}

Let $\R^{2,2}$ denotes the vector space $\R^4$ equipped with a quadratic form $q_{2,2}$ of signature
$(2,2)$.
The Anti-de Sitter $AdS_3$ space is defined as the locus in $\R^{2,2}$ of points
where the quadratic form takes the value $-1$,
endowed with the Lorentz metric induced by $q_{2,2}$.

On the Lie algebra ${\mathfrak gl}(2,\R)$ of $2\times2$ matrices with real coefficients,
the determinant defines a quadratic form of signature $(2,2)$. Hence we can
 consider the anti-de Sitter space $\AdS_3$ as the group
$\op{SL}(2,\R)$ equipped with its Killing metric, which is bi-invariant. There is therefore
an isometric action of $\op{SL}(2,\R)\times \op{SL}(2,\R)$ on $\AdS_3$, where the
two factors act by left and right multiplication, respectively. It is well known
(see \cite{mess}) that this yields an isomorphism between the identity component
$\isom_0(\AdS_3)$ of the isometry group of $\AdS_3$ and $\op{SL}(2,\R)\times \op{SL}(2,\R)/(-I,-I)$.
It follows directly that the identity component of the isometry group of $\AdS_{3,+}$
(the quotient of $\AdS_3$ by the antipodal map) is $\op{PSL}(2,\R)\times \op{PSL}(2,\R)$.
In all the paper, we denote by $\isomz$ the identity component of the
isometry group of $\AdS_{3,+}$, so that $\isomz$ is isomorphic to
$\op{PSL}(2,\R)\times \op{PSL}(2,\R)$.

Another way to identify the identity component of the isometry group of $\AdS_3$ is
by considering the projective model of $\AdS_{3,+}$, as the interior of a quadric
$Q\subset \R P^3$. This quadric is ruled by two families of lines, which we
call the ``left'' and ``right'' families and denote by $\cL_l,\cL_r$. Those two
families of lines have a natural projective structure (given for instance by
the intersection of the lines of $\cL_l$ with a fixed line of $\cL_r$). Given
an isometry $u\in \isomz$, it acts projectively on both $\cL_l$ and
$\cL_r$, defining two elements $\rho_l,\rho_r$ of $\op{PSL}(2,\R)$. This provides
an identification of $\isomz$ with
$\op{PSL}(2,\R)\times \op{PSL}(2,\R)$.

The projective space $\R P^3$ referred above is of course the projectivization of $\R^{2,2}$, and the elements of
the quadric $Q$ are the projections of $q_{2,2}$-isotropic vectors.
The geodesics of  $\AdS_{3,+}$ are the intersections between projective lines of $\R P^3$ and the interior of $Q$.
Such a projective line is the projection of a 2-plane $P$ in $\R^{2,2}$. If $P$ is space-like, the geodesic is
space-like. If $P$ is light-like (\textit{\textit{i.e.}}
the restriction of $q_{2,2}$ on $P$ is degenerate) then the geodesic is light-like. Finally,
if $P$ is time-like (\textit{\textit{i.e.}} contains a time-like direction) then the geodesic is time-like.

Similarly, totally geodesic planes are projections of 3-planes in $\R^{2,2}$. They can be space-like, light-like
or time-like. Observe that space-like planes in $\AdS_{3,+}$, with the induced metric, are isometric to the hyperbolic disk.
Actually, their images in the projective model of $\AdS_{3,+}$ are
Klein models of the hyperbolic disk.
Time-like planes in $\AdS_{3,+}$ are isometric to the anti-de Sitter space of dimension two.

Consider an affine chart of $\R P^3$, complement of the projection of a space-like hyperplane of $\R^{2,2}$.
The quadric in such an affine chart is a one-sheeted hyperboloid.
The interior of this hyperboloid is an \textit{affine chart} of $\AdS_3$.
The intersection of a geodesic of $\AdS_{3,+}$ with an affine chart
is a component of the intersection of the affine chart  with an affine line $\ell$.
The geodesic is space-like if $\ell$ intersects\footnote{of course, such an intersection may happen at the projective plane at infinity.} twice the hyperboloid, light-like
if $\ell$ is tangent to the hyperboloid, and time-like if $\ell$ avoids the hyperboloid.

For any $p$ in $\AdS_{3,+}$, the $q_{2,2}$-orthogonal $p^\perp$ is a space-like hyperplane. Its complement is therefore an
affine chart, that we denote by ${\mathcal A}(p)$. It is the \textit{affine chart centered at $p$.} Observe that ${\mathcal A}(p)$ contains $p$,
any non-time-like geodesic containing $p$ is contained in ${\mathcal A}(p)$.

Unfortunately, affine charts always miss some region of $\AdS_{3,+}$,
and we will consider regions of $\AdS_{3,+}$ which do not
fit entirely in such an affine chart. In this situation, one can consider
the conformal model: there is a conformal map from $\AdS_3$ to
$\D^2 \times {\mathbb S}^1$, equipped with the metric
$ds^2_0 - dt^2$ where $ds_0^2$ is the spherical metric on the disk $\D^2$, \textit{\textit{i.e.}} where $(\D^2, ds^2_0)$
is a hemisphere.

One needs also to consider the universal covering $\uAdS_3$.
It is conformally isometric to $\D^2 \times \R$ equipped
with the metric $ds^2_0 - dt^2$. But it is also advisable to consider it
as the union of an infinite sequence $({\mathcal A}_n)_{(n \in \Z)}$ of affine charts.
This sequence is totally ordered, every term lying in the future of the previous one
and in the past of the next one. These copies of affine charts are separated one from the other by a space-like plane,
\textit{\textit{i.e.}} a totally geodesic plane isometric to the hyperbolic disk.
Observe that each space-like or light-like geodesic of $\uAdS_3$ is contained in such an affine chart;
whereas each time-like geodesic
intersects every copy ${\mathcal A}_n$ of the affine chart.

If two time-like geodesics meet at some point $p$, then they meet infinitely many times. More precisely, there
is a point $q$ in $\uAdS_3$ such that if a time-like geodesic contains $p$, then it  contains $q$ also.
Such a point is said to be
\textit{conjugate to $p$.} The existence of conjugate points
corresponds to the fact that for
any $p$ in $\AdS_3 \subset \R^{2,2}$, every 2-plane containing $p$ contains also $-p$.
If we consider $\uAdS_3$ as the union of infinitely many copies ${\mathcal A}_n \quad (n \in \Z)$ of the affine chart ${\mathcal A}(p)$
centered at $p$, with ${\mathcal A}_0 = {\mathcal A}(p)$, then the points conjugate to $p$ are precisely the centers of the ${\mathcal A}_n$, all representing the same element
in the interior of the hyperboloid.

The center of ${\mathcal A}_1$ is  \textit{the first conjugate point $p^+$ of $p$ in the future.} It has the property that
any other point in the future of $p$ and conjugate to $p$ lies in the future of $p^+$. Inverting the time, one defines similarly
the \textit{first conjugate point $p^-$ of $p$ in the past} as the center of ${\mathcal A}_{-1}$.

Finally, the future in ${\mathcal A}_0$ of $p$ is the interior of a convex cone based at $p$ (more precisely, the interior of
the convex hull in $\R P^3$ of the union of $p$ with the space-like 2-plane between ${\mathcal A}_0$ and ${\mathcal A}_1$). The future
of $p$ in $\uAdS_3$ is the union of this cone with all the ${\mathcal A}_n$ with $n>0$.

In particular, one can give the following description of the domain $E(p)$,
intersection between the future of $p^-$ and the past of $p^+$:
its the union of ${\mathcal A}_0$, the past of $p^+$ in ${\mathcal A}_1$ and the future of $p^-$ in ${\mathcal A}_{-1}$.

We will need a similar description of 2-planes in $\uAdS_3$
(\textit{i.e.} of totally geodesic hypersurfaces) containing a given space-like geodesic.
Let $c$ be such a space-like geodesic, consider an affine chart ${\mathcal A}_0$
centered at a point in $c$ (therefore, $c$ is the segment
joining two points in the hyperboloid).
The set comprising first conjugate points in the future of points in $c$ is a space-like geodesic $c_+$,
contained in the chart ${\mathcal A}_1$. Every time-like 2-plane containing $c$ contains
also $c_+$, and \textit{vice versa.} The intersection between the future of $c$ and the past of $c_+$ is the union of:
\begin{itemize}
\item a wedge between two light-like half-planes both containing $c$ in their boundary,
\item a wedge between two light-like half-planes both containing $c_+$ in their boundary,
\item the space-like 2-plane between ${\mathcal A}_0$ and ${\mathcal A}_1$.
\end{itemize}

\section{Singularities in singular $\AdS$-spacetimes}
\label{ssc:singular}

In this paper, we require spacetimes to be oriented and time oriented. Therefore, by $\AdS$-spacetime we mean an
$(\op{Isom}_0(\AdS_3), \AdS_3)$-manifold.
In this section, we classify singular lines and singular points in singular $\AdS$-spacetimes.

In order to understand the notion of singularities, let us consider first the similar situation in the classical case of Riemannian geometric structures,
for example, of (singular) euclidean manifolds (see p. 523-524 of \cite{thurstonshape}). Locally, a singular point $p$
in a singular euclidean space is the intersection of various singular rays, the complement of these rays being
locally isometric to $\R^3$. The singular rays look as if they were geodesic rays. Since the singular space is assumed to have a manifold topology,
the space of rays, singular or not, starting from
$p$ is a topological $2$-sphere $L(p)$: 
the \textit{link} of $p$. Outside the singular rays, $L(p)$ is  locally modeled on the space of rays starting from a
point in the regular model, i.e. the $2$-sphere
${\mathbb S}^2$ equipped with its usual round metric. But this metric degenerates on the singular points of $L(p)$, i.e. the singular rays.
The way it may degenerate is described similarly: let $r$ be a singular point in $L(p)$ (a singular ray), and let $\ell(p)$
be the space of rays in $L(p)$ starting from $r$. It is a topological circle, locally modeled on the space $\ell_0$ of geodesic rays at a point
in the metric sphere ${\mathbb S}^2$. The space $\ell_0$ is naturally identified with the $1$-sphere ${\mathbb S}^1$ of perimeter $2\pi$,
and locally ${\mathbb S}^1$-structures on topological circles $\ell(p)$ are easily classified: they are determined by a positive real number,
the \textit{cone angle}, and $\ell(p)$ is isomorphic to $\ell_0$ if and only if this cone angle is $2\pi$.
Therefore, the link $L(p)$ is naturally equipped with a spherical metric with cone-angle singularities, and one easily
recover the geometry around $p$ by a fairly intuitive construction, the \textit{suspension} of $L(p)$.
We refer to \cite{thurstonshape} for further details.

Our approach in the $\AdS$ case is similar. The neighborhood of a singular point $p$ is the suspension of its link $L(p)$, this link
being a topological $2$-sphere equipped with a structure whose regular part is locally modeled on the link $\HS^2$ of a regular point
in $\AdS_3$, and whose singularities are suspensions of their links $\ell(r)$, which are circles locally modeled on the link
of a point in $\HS^2$.

However, the situation in the $\AdS$ case is much more intricate than in the euclidean case, since there is a bigger variety of singularity types  in $L(p)$:
a singularity in $L(p)$, i.e. a singular ray through $p$ can be time-like, space-like or light-like. Moreover, non-time-like lines may differ through the causal behavior
near them (for the definition of the future and past of a singular line, see section~\ref{sub.futpast}).

\begin{prop}
\label{pro:classising}
The various types of singular lines in $\AdS$ spacetimes are:
\begin{itemize}
\item \textbf{Time-like lines:} they correspond to massive particles (see section~\ref{sub:massiveparticle}).
\item \textbf{Light-like lines of degree $2$:} they correspond to photons (see Remark~\ref{rk:photon}).
\item \textbf{Space-like lines of degree $2$:} they correspond to tachyons (see section~\ref{sub.tachyon}).
\item \textbf{Future BTZ-like singular lines:} These singularities are characterized by the property that it is space-like, but has no future.
\item \textbf{Past BTZ-like singular lines:} These singularities are characterized by the property that it is space-like, but has no past.
\item \textbf{(Past or future) extreme BTZ-like singular lines:} they look like past/future BTZ-like singular lines, except that they are light-like.
\item \textbf{Misner lines:} they are space-like, but have no future and no past. Moreover, any neighborhood of the singular lines contains
closed time-like curves.
\item \textbf{Light-like or space-like lines of degree $k \geq 4$:} they can be described has $k/2$-branched cover over light-like or space-like
lines of degree $2$. They have the ``unphysical'' property of admitting a non-connected future.
\end{itemize}
\end{prop}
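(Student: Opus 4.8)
The plan is to classify singular lines through their \emph{links}, exactly as in the regular case treated above: a neighborhood of a point $p$ lying in the interior of a singular line is the suspension of a circle $\ell(r)$ carrying a one-dimensional structure modeled on the link of a point $r$ in $\HS^2$, where $\HS^2$ is itself the link of a regular point of $\AdS_3$. Thus classifying singular lines amounts to classifying the pairs $(r,\ell(r))$. The first and coarsest invariant is the position of $r$ in $\HS^2$: the developing map $\cD$ sends the singular line to a geodesic of $\uAdS_3$, and the causal type of this geodesic is locally constant along the line since $\cD$ is a local isometry and the holonomy preserves causal type; it is recorded by whether $r$ lies in the hyperbolic part, the de Sitter part, or on one of the two null circles of $\HS^2$. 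This splits the discussion into the time-like, space-like, and light-like cases.

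In each case I would then analyze the holonomy $h$ of a small meridian loop around the line, which must be an element of $\isomz$ fixing the developed geodesic, and describe the transverse cone it generates. When the line is time-like its tangent is time-like, so the orthogonal $2$-plane is positive definite and the transverse cone is genuinely Riemannian; the holonomy is a rotation (elliptic in each $\op{PSL}(2,\R)$-factor) and $\ell(r)$ is a spherical circle whose only invariant is a cone angle. This reproduces verbatim the hyperbolic cone-manifold picture and yields the massive particles of Section~\ref{sub:massiveparticle}, positivity corresponding to angle less than $2\pi$. All the genuinely new phenomena occur when the tangent to the line is non-time-like, so that the transverse plane carries an indefinite or degenerate form.

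For a space-like line the transverse plane is Lorentzian, and the meridian holonomy fixing the geodesic is, generically, a boost; the link $\ell(r)$ is then a circle carrying an indefinite structure whose modulus is a real parameter, producing the tachyons of Section~\ref{sub.tachyon}. The main work here is to track the causal structure of the transverse cone rather than just the conjugacy class of $h$: it is the degeneration of the future and past cones in the quotient that distinguishes the BTZ-like lines (those for which the future, or the past, disappears), the Misner lines (neither future nor past survives and every neighborhood contains closed time-like curves), and, on the null circles, their light-like analogues, the photons of Remark~\ref{rk:photon} and the extreme BTZ lines. I would read off each of these from the way the light-like directions of the transverse plane are moved by $h$, and verify the causality condition of Section~\ref{sub.futpast} case by case.

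The final item, lines of degree $k\ge 4$, I would handle through the counting invariant of Section~\ref{sub:HSsingularity}: the degree is the number of connected components of time-like vectors in the normal bundle, equal to $2$ in the physically natural situation where each point has a single past and a single future. A structure of higher (even) degree arises by letting the meridian wind several times before closing up, so that it factors as a $k/2$-fold branched cover of a degree-$2$ space-like or light-like line; the non-connectedness of the future is then immediate, since the transverse normal cone now has $k/2>1$ future components. The main obstacle throughout is the space-like and light-like analysis, where, unlike in the Riemannian case, one cannot read off the singularity type from the holonomy alone but must follow the global causal behaviour (existence of future, of past, and of closed causal curves) in the transverse Lorentzian cone.
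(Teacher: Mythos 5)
Your overall route is the paper's own: reduce the classification of singular lines to that of their links, i.e.\ of circles modeled on the link $\ell(p)$ of a point $p \in \HS^2$ with structure group the pointwise stabilizer $\Gamma_p$, split according to the position of $p$ in $\HS^2$ (time-like, space-like, light-like), and then derive the causal properties from the suspension. The time-like case and the description of degree-$k$ lines as branched covers of degree-$2$ lines agree with Section~\ref{sub:HSsingularity} and Remark~\ref{rk:degk}.

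The gap is in the space-like (and light-like) case, i.e.\ exactly where the new phenomena occur. You propose to separate tachyons, BTZ-like lines and Misner lines by the conjugacy class of the meridian holonomy $h$ together with ``the way the light-like directions of the transverse plane are moved by $h$''. This cannot work as stated: in all of these cases $h$ generates the same one-parameter group $\Gamma_p$ (a boost fixing the developed geodesic pointwise), and it \emph{fixes} the two light-like directions of the normal plane, so the holonomy and its action on isotropic directions are literally identical for a tachyon, a past or future BTZ-like line, and a Misner line. The invariant that does separate them is the image of the developing map of the link circle, and to get your hands on it you need the lemma your plan never states: the developing map of an $\R\PP^1$-structure (hence of a $(\Gamma_p,\ell(p))$-structure) on the circle is a homeomorphism onto an interval of $\uRP$ on which the holonomy acts freely and properly discontinuously (Section~\ref{sub.RP1circle}). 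Granting this, the link circle is either the quotient of an interval bounded by fixed points of $h$ --- the two time-like intervals $i^{\pm}(\ell(p))$ giving the two BTZ-like types, the space-like intervals giving Misner lines --- or it wraps around $\ell(p)$ with degree $k \geq 1$, which is forced to be \emph{even} because the components of the preimages $I^+$ and $I^-$ alternate in $\uRP$ and the deck transformation $\delta^k g$ must preserve $I^+$; then $k=2$ gives tachyons and $k \geq 4$ the exotic lines with $k/2$ future components. Without this step your case analysis carries no exhaustiveness: nothing excludes further structures with the same holonomy, and ``the degeneration of the future and past cones in the quotient'' is a property one can only read off after knowing the structure \emph{is} such a quotient. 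The same remark applies to your parenthetical assertion that the degree is even, and the statement that every neighborhood of a Misner line contains closed time-like curves still requires the explicit bigon argument together with Lemma~\ref{le.CTC}, as in Section~\ref{sub.futpast}.
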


The several types of singular lines, as a not-so-big surprise, reproduce the several types of particles considered in physics. Some of these singularities
appear in the physics litterature, but, as far as we know, not all of them (for example, the terminology
\textit{tachyons,} that we feel adapted, does not seem to appear anywhere).


In section~\ref{sub:HS} we briefly present the space $\HS^2$ of rays through a point in $\AdS_3$. In section
\ref{sub:HSsurface}, we give the precise definition of regular HS-surfaces and their suspensions. In section~\ref{sub:HSsingularity}
we classify the circles locally modeled on links of points in $\HS^2$, i.e. of singularities of singular HS-surfaces which can then be defined
in the following section~\ref{sub:HSsingular}. In this section~\ref{sub:HSsingular}, we can state the definition of singular $\AdS$ spacetimes.

In section~\ref{sub:classiline}, we classify singular lines. In section~\ref{sub.futpast} we define and study the causality notion in singular AdS spacetimes. In particular we define
the notion of \textbf{causal HS-surface}, i.e. singular points admitting a neighborhood containing no closed causal curve. It is in this
section that we establish the description of the causality relation near the singular lines as stated in
Proposition~\ref{pro:classising}.

Finally, in section~\ref{sub:desgeom}, we provide a geometric description of each singular lines; in particular, we justify the ``massive particle'',
``photon'' and ``tachyon'' terminology.

\subsection{HS geometry}
\label{sub:HS}
%
Given a point $p$ in $\uAdS_3$, let $L(p)$ be the link of $p$, \textit{i.e.} the set of (non-parametrized)
oriented geodesic rays based at $p$. Since these rays are determined by their tangent vector at $p$
up to rescaling, $L(p)$ is naturally identified with the set of rays in $T_p\uAdS_3$.
Geometrically, $T_p\uAdS_3$ is a copy of Minkowski space $\R^{1,2}$. Denote by ${\operatorname{HS}}^2$
the set of geodesic rays into $\R^{1,2}$. It admits a natural decomposition in five subsets:
\begin{itemize}
\item the domains $\HH^2_+$ and $\HH^2_-$ comprising respectively future oriented and past oriented
time-like rays,
\item the domain $\dS^2$ comprising space-like rays,
\item the two circles $\partial\HH^2_+$ and $\partial\HH^2_-$, boundaries of $\HH^2_\pm$ in $\HS^2$.
\end{itemize}

The domains $\HH^2_\pm$ are the Klein models of the hyperbolic plane,
and $\dS^2$ is the Klein model of de Sitter space of dimension $2$. The group $\SO_0(1,2)$,
\textit{i.e.} the group of of time-orientation preserving and orientation preserving isometries of
$\R^{1,2}$, acts naturally (and projectively) on $\HS^2$, preserving this decomposition.

The classification of elements of $\SO_0(1,2) \approx \op{PSL}(2,\R)$ is presumably well-known
by most of the readers, but we stress out here that it is related to the $\HS^2$-geometry:
let $g$ be a non trivial element of $\SO_0(1,2)$.

\begin{itemize}
\item $g$ is \textit{elliptic} if and only if it admits exactly two fixed points, one in $\HH^2_+$, and the other (the opposite) in $\HH^2_-$,
\item $g$ is \textit{parabolic} if and only if it admits exactly two fixed points, one in $\partial\HH^2_+$, and the other (the opposite) in $\partial\HH^2_-$,
\item $g$ is \textit{hyperbolic} if and only it admits exactly 6 fixed points: a pair of opposite points in $\partial\HH^2_\pm$, and 2 pairs of opposite points
in $\dS^2$.
\end{itemize}

In particular, $g$ is elliptic (respectively hyperbolic) if and only if it admits a fixed in $\HH^2_\pm$ (respectively in $\dS^2$).

\subsection{Suspension of regular HS-surfaces}
\label{sub:HSsurface}
\begin{defi}
A HS-surface is a topological surface endowed with a $(\SO_0(1,2), \HS^2)$-structure.
\end{defi}

The $\SO_0(1,2)$-invariant orientation on $\HS^2$ induces an orientation on
every HS-surface. Similarly, the $\dS^2$ regions admit a canonical time orientation.
Hence any HS-surface is oriented, and its de Sitter regions are time oriented.

Given a HS-surface $\Sigma$, and once fixed a point $p$ in $\uAdS_3$,
we can construct a locally $\AdS$ manifold $e(\Sigma)$, called the suspension
of $\Sigma$, defined as follows:
\begin{itemize}
\item for any $v$ in $\HS^2 \approx L(p)$, let $r(v)$ be the geodesic ray issued from $p$
tangent to $v$. If $v$ lies in the closure of $\dS^2$, defines $e(v) := r(v)$; if
$v$ lies in $\HH^2_\pm$, let $e(v)$ be the portion of $r(v)$ between $p$ and the first
conjugate point $p^\pm$.
\item for any open subset $U$ in $\HS^2$, let $e(U)$ be the union of all $e(v)$ for $v$ in $U$.
\end{itemize}

Observe that $e(U) \setminus \{ p \}$ is an open domain in $\uAdS_3$,
and that $e(\HS^2)$ is the intersection $E(p)$ between the future of the first conjugate point in the past
and the past of the first conjugate point in the future (cf. the end of section~\ref{sub:backgroundads}).

The HS-surface $\Sigma$ can be understood as the disjoint union of open domains
$U_i$ in $\HS^2$, glued one to the other by coordinate change maps $g_{ij}$ given
by restrictions of elements of $\SO_0(1,2)$:

\[ g_{ij}: U_{ij} \subset U_j \to U_{ji} \subset U_i \]

But $\SO_0(1,2)$ can be considered
as the group of isometries of $\AdS_3$ fixing $p$. Hence every $g_{ij}$ induces
an identification between $e(U_{ij})$ and $e(U_{ji})$. Define $e(\Sigma)$
as the disjoint union of the $e(U_i)$, quotiented by the relation
identifying $q$ in $e(U_{ij})$ with $g_{ij}(q)$ in $e(U_{ji})$.
This quotient space contains a special point $\bar{p}$, represented in every $e(U_i)$
by $p$, and called the \textit{vertex} (we will sometimes abusively denote $\bar{p}$ by $p$).
The fact that $\Sigma$ is a surface implies that $e(\Sigma) \setminus \bar{p}$ is
a three-dimensional manifold, homeomorphic to $\Sigma \times \R$. The topological space
$e(\Sigma)$ itself is homeomorphic to the cone over $\Sigma.$  Therefore $e(\Sigma)$ is a manifold only
when $\Sigma$ is homeomorphic to the 2-sphere.
But it is easy to see that every HS-structure on the 2-sphere is isomorphic
to $\HS^2$ itself; and the suspension $e(\HS^2)$ is simply the regular AdS-manifold $E(p)$.

Hence in order to obtain singular $\AdS$-\textit{manifolds} that are not merely \textit{regular} $\AdS$-manifolds,
we need to consider (and define!) singular HS-surfaces.

\subsection{Singularities in singular HS-surfaces}
\label{sub:HSsingularity}
The classification of singularities in singular HS-surfaces essentially reduces (but not totally) to the
classification of  $\R\PP^1$-structures on the circle.

\subsubsection{Real projective structures on the circle}
\label{sub.RP1circle}
Let $\R\PP^1$ be the real projective line, and let $\widetilde{\R\PP}^1$ be its universal
covering. We fix a homeomorphism between $\widetilde{\R\PP}^1$ and the real line:
this defines an orientation and an order $<$ on $\widetilde{\R\PP}^1$.
Let $G$ be the group $\operatorname{PSL}(2, \R)$ of projective transformations
of $\R\PP^1$, and let $\tilde{G}$ be its universal covering: it is the group of
projective transformations of $\widetilde{\R\PP^1}$. We have an exact sequence:

\[ 0 \rightarrow \Z \rightarrow \tilde{G} \rightarrow G \rightarrow 0\]

Let $\delta$ be the generator of the center $\Z$ such that for every $x$ in $\widetilde{\R\PP}^1$ the
inequality $\delta x > x$ holds. The quotient of $\widetilde{\R\PP}^1$ by $\Z$ is projectively
isomorphic to $\R\PP^1$.

The elliptic-parabolic-hyperbolic classification of elements of $G$
induces a similar classification for elements in $\tilde{G}$, according
to the nature of their projection in $G$. Observe that non-trivial elliptic elements acts on
$\widetilde{\R\PP}^1$ as translations, \textit{i.e.} freely and properly discontinuously.
Hence the quotient space of their action is naturally a real projective structure on the circle.
We call these quotient spaces \textit{elliptic circles.} Observe that it includes
the usual real projective structure on $\R\PP^1$.

Parabolic and hyperbolic elements can all be decomposed as a product $\tilde{g}=\delta^kg$
where $g$ has the same nature (parabolic or hyperbolic) than $\tilde{g}$, but admits fixed
points in $\widetilde{\R\PP}^1$. The integer $k \in \Z$ is uniquely defined. Observe
that if $k \neq 0$, the action of $\tilde{g}$ on $\widetilde{\R\PP}^1$ is free and properly
discontinuous. Hence the associated quotient space, which is naturally equipped with a real
projective structure, is homeomorphic to the circle. We call it \textit{parabolic} or
\textit{hyperbolic circle,} according to the nature of $g$, \textit{of degree $k$.} Inverting
$\tilde{g}$ if necessary, we can always assume, up to a real projective isomorphism, that $k \geq 1$.

Finally, let $g$ be a parabolic or hyperbolic element of $\tilde{G}$ fixing a point $x_0$
in $\widetilde{\R\PP}^1$. Let $x_1$ be the unique fixed point of $g$ such that $x_1 > x_0$
and such that $g$ admits no fixed point between $x_0$ and $x_1$: if $g$ is parabolic, $x_1 = \delta x_0$;
and if $g$ is hyperbolic, $x_1$ is the unique $g$-fixed point in $] x_0, \delta x_0 [$.
Then the action of $g$ on $]x_0, x_1[$ is free and properly discontinuous, the quotient space
is a \textit{parabolic} or \textit{hyperbolic circle of degree $0$.}

These examples exhaust the list of real projective structures on the circle up to
real projective isomorphism. We briefly recall the proof: the developing map
$d: \R \to \widetilde{\R\PP}^1$ of a real projective structure on $\R/\Z$ is a local
homeomorphism from the real line into the real line, hence a homeomorphism onto its image $I$.
Let $\rho: \Z \to \tilde{G}$ be the holonomy morphism: being a homeomorphism, $d$ induces
a real projective isomorphism between the initial projective circle and $I/\rho(\Z)$.
In particular, $\rho(1)$ is non-trivial, preserves $I$, and acts freely and properly discontinuously on
$I$. An easy case-by-case study leads to a proof of our claim.

It follows that every cyclic subgroup of $\tilde{G}$ is the holonomy group of a real projective
circle, and that two such real projective circles are projectively isomorphic if and only if
their holonomy groups are conjugate one to the other. But some subtlety appears if one takes into consideration the
orientations:
usually, by real projective structure we mean a $(\operatorname{PGL}(2,\R), \R\PP^1)$-structure,
ie coordinate changes might reverse the orientation. In particular, two such structures are isomorphic
if there is a real projective transformation conjugating the holonomy groups, even if this transformation
reverses the orientation. But here, by $\R\PP^1$-circle we mean
a $(G, \R\PP^1)$-structure on the circle, with $G=\op{PSL}(2,\R)$. In particular, it admits a canonical orientation,
preserved by the holonomy group: the one whose lifting
to $\R$ is such that the developing map is orientation preserving. To be a $\R\PP^1$-isomorphism, a real projective conjugacy needs to preserve this orientation.

Let $L$ be a $\R\PP^1$-circle. Let $\gamma_0$ be the generator of $\pi_1(L)$ such that, for the canonical orientation defined above, and
for every $x$ in the image of the developing map:
\begin{equation}\label{eq>}
 \rho(\gamma_0)x > x
\end{equation}

Let $\rho(\gamma_0) =  \delta^kg$ be the decomposition such that $g$ admits fixed points in $\widetilde{\R\PP}^1$.
According to the inequality \eqref{eq>}, the degree $k$ is non-negative. Moreover:

\textbf{The elliptic case:}
Elliptic $\R\PP^1$-circles (\textit{i.e.} with elliptic holonomy) are uniquely parametrized
by a positive real number (the angle).

\textbf{The case $k \geq 1$:} Non-elliptic $\R\PP^1$-circles of degree $k \geq 1$ are uniquely parametrized by
the pair $(k, [g])$, where $[g]$ is a conjugacy class in $G$. Hyperbolic conjugacy classes are uniquely parametrized by
a positive real number: the absolute value of their trace. There are exactly two parabolic conjugacy classes: the \textit{positive parabolic class,} comprising the parabolic elements $g$ such that $gx \geq x$ for every $x$ in $\widetilde{\R\PP}^1$, and the
\textit{negative parabolic class,} comprising the parabolic elements $g$ such that $gx \leq x$ for every $x$ in $\widetilde{\R\PP}^1$
(this terminology is justified in {section}~\ref{sub.graviton}, and Remark~\ref{rk:positive/negative}).

\textbf{The case $k = 0$:} In this case, $L$ is isomorphic to the quotient by $g$ of a segment $]x_0, x_1[$
admitting as extremities two successive fixed points of $g$. Since we must have $gx > x$ for every $x$ in this segment,
$g$ cannot belong to the negative parabolic class: \textit{Every parabolic $\R\PP^1$-circle of degree $0$ is
positive.} Concerning the hyperbolic $\R\PP^1$-circles, the conclusion is the same as in the case $k \geq 1$:
they are uniquely parametrized by a positive real number. Indeed, given a hyperbolic element $g$ in
$\tilde{G}$, any $\R\PP^1$-circle of degree $0$ with holonomy $g$ is a quotient of a segment $]x_0, x_1[$ where
the left extremity $x_0$ is a repelling fixed point of $g$, and the right extremity an attractive fixed point.

\subsubsection{ HS-singularities}
For every $p$ in $\HS^2$, let $\ell(p)$ the link of $p$, \textit{i.e.} the space of rays in $T_p\HS^2$. Such a ray $v$ defines an
oriented projective line $c_v$ starting from $p$. Let $\Gamma_p$ be the \textit{pointwise} stabilizer in $\SO_0(1,2)\approx \op{PSL}(2, \R)$ of $p$,
\textit{i.e.} the subgroup which preserves every point of the ray $p$ in $\R^{1,2}$.

\begin{defi}
A $(\Gamma_p, \ell(p))$-circle is the data of a point $p$ in $\HS^2$ and a $(\Gamma_p, \ell(p))$-structure on the circle.
\end{defi}

Since $\HS^2$ is oriented, $\ell(p)$ admits a natural
$\R\PP^1$-structure, and thus every $(\Gamma_p, \ell(p))$-circle admits a natural underlying $\R\PP^1$-structure.

Given a $(\Gamma_p, \ell(p))$-circle $L$,
we construct a singular HS-surface $\mathfrak{e}(L)$: for every
element $v$ in the link of $p$, define $\mathfrak{e}(v)$ as the closed segment $[-p, p]$ contained
in the projective ray defined by $v$, where $-p$ is the antipodal point of $p$ in $\HS^2$,
and then operate as we did for defining the AdS space $e(\Sigma)$ associated to
a HS-surface. The resulting space $\mathfrak{e}(L)$ is topologically a sphere, locally modeled on
$\HS^2$ in the complement of two singular points corresponding to $p$ and $-p$.
These singular points will be typical singularities in HS-surfaces. Here, the singularity corresponding to $p$
as a prefered status, as representation a $(\Gamma_p, \ell(p))$-singularity.

There are several types of singularity, mutually non isomorphic:
\begin{itemize}

\item \textit{time-like singularities:} they correspond to the case where $p$ lies
in $\HH_\pm^2$. Then, $\Gamma_p$ is a 1-parameter elliptic subgroup of $G$, and
$L$ is an elliptic $\R\PP^1$-circle. When $p$ lies in $\HH^2_+$ (respectively $\HH^2_-$),
then the singularity is a {future (respectively past) time-like singularity.}

\item \textit{space-like singularities:} when $p$ lies in $\dS^2$, $\Gamma_p$
is generated by hyperbolic element of $\SO_0(1,2)$, and $L$ is a hyperbolic $\R\PP^1$-circle.

\item \textit{light-like singularities:} it is the case where $p$ lies in $\partial\HH^2_\pm$.
The stabilizer $\Gamma_p$ is generated by a parabolic element of $\SO_0(1,2)$,
and the link $L$ is a parabolic $\R\PP^1$-circle. We still have to
distinguish between past and future light-like singularities.

\end{itemize}

It is easy to classify time-like singularities up to (local) HS-isomorphisms: they are locally
characterized by their underlying structure of elliptic $\R\PP^1$-circle.
In other words, time-like singularities are nothing but the usual cone singularities of hyperbolic surfaces, since they
admit neighborhoods locally modeled on the Klein model of the hyperbolic disk.

But there are several types of space-like singularities, according to the causal structure around them.
More precisely: recall that every element $v$ of $\ell(p)$ is a ray in $T_p\HS^2$. 
Taking into account that $\dS^2$ is the Klein model of the 2-dimensional de Sitter space, it follows
that $v$, as a direction in a Lorentzian spacetime,
can be a time-like, light-like or space-like direction.
Moreover, in the two first cases, it can be future oriented or past oriented.

\begin{defi}
If $p$ lies in $\dS^2$, we denote by $i^+(\ell(p))$ (respectively $i^-(\ell(p))$) the set of future oriented (resp. past oriented)
directions.
\end{defi}

Observe that $i^+(\ell(p))$ and $i^-(\ell(p))$ are connected, and that their complement in $\ell(p)$ has two connected components.

This notion can be extended to time-like singularities:

\begin{defi}
If $p$ lies in $\partial\HH^2_+$, the domain $i^+(\ell(p))$ (respectively $i^-(\ell(p))$) is the set of directions $v$ such that $c_v(s)$ lies
in $\HH^2_+$ (respectively $\dS^2$) for $s$ sufficiently small.
Similarly, if $p$ lies in $\partial\HH^2_-$, the domain $i^-(\ell(p))$ (respectively $i^+(\ell(p))$) is the set of directions $v$ such that $c_v(s)$ lies
in $\HH^2_-$ (respectively $\dS^2$) for $s$ sufficiently small.
\end{defi}

In this situation, $i^+(\ell(p))$ and $i^-(\ell(p))$ are the connected components of the complement of the two points in $\ell(p)$
which are directions tangent to $\partial\HH^2_\pm$.

For time-like singularities, we simply define  $i^+(\ell(p)) = i^-(\ell(p)) = \emptyset$.

Finally, observe that the extremities of the arcs $i^\pm(\ell(p))$ are precisely the fixed points of $\Gamma_p$.

\begin{defi}
Let $L$ be a $(\Gamma_p, \ell(p))$-circle. Let $d: \tilde{L} \to \ell(p)$ the developing map.
The preimages $d^{-1}(i^+(\ell(p)))$ and $d^{-1}(i^-(\ell(p)))$ are open domain in $\tilde{L}$, preserved by the deck transformations.
Their projections in $L$ are denoted respectively by $i^+(L)$ and $i^-(L)$.
 \end{defi}

We invite the reader to convince himself that
the $\R\PP^1$-structure and the additional data of $i^\pm(L)$
determine the $(\Gamma_p, \ell(p))$-structure on the link,
hence the HS-singular point up to HS-isomorphism.

In the sequel, we present all the possible types of singularities, according to the position
in $\HS^2$ of the reference point $p$, and according to the degree of the underlying $\R\PP^1$-circle.
Some of them are called BTZ-like or Misner singularities; the reason of this terminology will be explained later
in section~\ref{sub:btz}, \ref{sub:misner}, respectively.

\begin{enumerate}

\item\textit{time-like singularities:}
We have already observed that they are easily classified: they can be considered as $\HH^2$-singularities. They
are characterized by their cone angle, and by their future/past quality.

\item\textit{space-like singularities of degree $0$:}
Let $L$ be a space-like singularity of degree $0$, \textit{i.e.} a $(\Gamma_p, \ell(p))$-circle
such that the underlying hyperbolic $\R\PP^1$-circle has degree $0$. Then  the holonomy of
$L$ is generated by a hyperbolic element $g$, and $L$ is isomorphic to the quotient of
an interval $I$ of $\ell(p)$ by the group $\langle g \rangle$ generated by $g$.
The extremities of $I$ are fixed points of $g$, therefore we have three possibilities:
\begin{itemize}
\item If $I=i^+(\ell(p))$,
then $L=i^+(L)$ and $i^-(L)=\emptyset$. The singularity is then called a \textit{BTZ-like past singularity}.
\item If $I=i^-(\ell(p))$, then $L=i^-(L)$ and $i^+(L)=\emptyset$.
The singularity is then called a \textit{BTZ-like future singularity}.
\item If $I$ is a component of $\ell(p) \setminus (i^+(\ell(p)) \cup i^-(\ell(p)))$,
then $L=i^+(L) = i^-(L)=\emptyset$. The singularity is a \textit{Misner singularity}.
\end{itemize}
\item\textit{light-like singularities of degree $0$:} When $p$ lies in $\partial\HH^2_+$,
and when the underlying parabolic $\R\PP^1$-circle
has degree $0$, then $L$ is the quotient of $i^+(\ell(p))$ or $i^-(\ell(p))$ by a parabolic element.
\begin{itemize}
\item If $I=i^+(\ell(p))$, then $L=i^+(L)$ and $i^-(L)=\emptyset$. The singularity is then called a \textit{future cuspidal singularity}. Indeed,
in that case, a neighborhood of the singular point in $\mathfrak{e}(L)$ with the singular point removed is an annulus locally modelled on the
quotient of $\HH^2_+$ by a parabolic isometry, \textit{i.e.}, a hyperbolic cusp.
\item If $I=i^-(\ell(p))$, then $L=i^-(L)$ and $i^+(L)=\emptyset$.
The singularity is then called a \textit{extreme BTZ-like future singularity}.
The case where $p$ lies in $\partial\HH^2_-$ and $L$ of degree $0$ is similar, we get the notion of \textit{past cuspidal singularity}
and \textit{extreme BTZ-like past singularity}.
\end{itemize}

\item\textit{space-like singularities of degree $k \geq 1$:} when the singularity is space-like of
degree $k \geq 1$, \textit{i.e.} when $L$ is a hyperbolic
$(\Gamma_p, \ell(p))$-circle of degree $\geq 1$, the situation is slightly more complicated.
In that situation, $L$ is the quotient of the universal covering $\tilde{L}_p \approx \uRP$ by a group generated by
an element of the form $\delta^kg$ where $\delta$ is in the center of $\tilde{G}$ and $g$ admits fixed points in $\tilde{L}_p$.
Let $I^\pm$ be the preimage in $\tilde{L}_p$ of $i^\pm(\ell(p))$ by the developing map.
Let $x_{0}$ be a fixed point of $g$ in $\tilde{L}_p$ which is a left extremity of a component of $I^+$ (recall
that we have prescribed an orientation, \textit{i.e.} an order, on the universal covering of any $\R\PP^1$-circle:
the one for which the developing map is increasing).
Then, this component is an interval $]x_{0}, x_{1}[$
where $x_{1}$ is another $g$-fixed point. All the other $g$-fixed points are the iterates $x_{2i}=\delta^{i}x_{0}$
and $x_{2i+1}=\delta^{i}x_{1}$. The components of $I^+$ are the intervals $\delta^{2i}]x_{0}, x_{1}[$
and the components of $I^-$ are $\delta^{2i+1}]x_{0}, x_{1}[$. It follows that the degree $k$ is an even integer.
We have a dichotomy:
\begin{itemize}
\item If, for every integer $i$, the point $x_{2i}$ (\textit{i.e.}
the left extremities of the components of $I^+$) is a repelling fixed point of $g$,
then the singularity is a \textit{positive space-like singularity of degree $k$.}
\item In the other case, \textit{i.e.} if the left extremities of the components of
$I^+$ are attracting fixed point of $g$,
then the singularity is a \textit{negative space-like singularity of degree $k$.}
\end{itemize}
In other words, the singularity is positive if and only if for every $x$ in $I^+$ we have $gx \geq x$.

\item\textit{light-like singularities of degree $k \geq 1$:}
Similarly, parabolic $(\Gamma_p, \ell(p))$-circles have even degree, and the dichotomy past/future among parabolic
$(\Gamma_p, \ell(p))$-circles of degree $\geq 2$ splits into two subcases: the positive case for which the parabolic
element $g$ satisfies $gx \geq x$
on $\tilde{L}_p$, and the negative case satisfying the reverse inequality (this positive/negative dichotomy is
inherent of the structure of $\uRP$-circle data, cf. the end of {section}~\ref{sub.RP1circle}).

\end{enumerate}

\begin{remark}
In the previous {section} we observed that there is only one $\R\PP^1$ hyperbolic circle of holonomy $\langle g \rangle$
up to $\R\PP^1$-isomorphism, but this remark does not extend to hyperbolic $(\Gamma_p, \ell(p))$-circles since
a real projective conjugacy between $g$ and $g^{-1}$, if preserving the orientation, must permute time-like and space-like
components. Hence positive hyperbolic $(\Gamma_p, \ell(p))$-circles and negative hyperbolic $(\Gamma_p, \ell(p))$-circles
are not isomorphic.
\end{remark}

\begin{remark}
\label{rk:p-p}
Let $L$ be a $(\Gamma_p, \ell(p))$-circle. The suspension $\mathfrak{e}(L)$ admits two singular points $\bar{p}$, $-\bar{p}$, corresponding to
$p$ and $-p$. Observe that when $p$ is space-like, $\bar{p}$ and $-\bar{p}$, as HS-singularities, are always isomorphic.
When $p$ is time-like, one of the singularities is future time-like and the other is past time-like. If $\bar{p}$ is a future light-like singularity of degree $k \geq 1$,
then $-\bar{p}$ is a \textit{past} light-like singularity of degree $k$, and \textit{vice versa}.

Finally, if $\bar{p}$ is a future cuspidal singularity, then $-\bar{p}$ is a past extreme BTZ-like singularity.
\end{remark}

\subsection{Singular HS-surfaces}
\label{sub:HSsingular}
Once we know all possible HS-singularities, we can define singular HS-surfaces:

\begin{defi}
A singular HS-surface $\Sigma$ is an oriented surface containing a discrete subset $\mathcal{S}$
such that $\Sigma \setminus \mathcal{S}$ is a regular HS-surface, and such that
every $p$ in $\mathcal{S}$ admits a neighborhood HS-isomorphic to an open subset of
the suspension $\mathfrak{e}(L)$ of a $(\Gamma_p, \ell(p))$-circle $L$.
\end{defi}

The construction of AdS-manifolds $e(\Sigma)$ extends to singular HS-surfaces:

\begin{defi}
A singular AdS spacetime is a 3-manifold $M$ containing a closed subset $\cL$ (the singular set) such that
$M \setminus \cL$ is a regular AdS-spacetime, and such that every $x$ in $\cL$ admits a neighborhood AdS-isomorphic
to the suspension $e(\Sigma)$ of a singular HS-surface.
\end{defi}

Since we require $M$ to be a manifold, each cone $e(\Sigma)$ must be a 3-ball, \textit{i.e.} each surface $\Sigma$ must be actually
homeomorphic to the 2-sphere.

There are two types of points in the singular set of a singular AdS spacetime:

\begin{defi}
\label{def:singline}
Let $M$ be a singular AdS spacetime. A singular line in $M$ is a connected subset of the singular
set comprising points $x$ such that every neighborhood of $x$ is AdS-isomorphic to the suspension
$e(\Sigma_x)$ where $\Sigma_x$ is a singular HS-surface ${\mathfrak e}(L_x)$ where $L_x$ is a  $(\Gamma_p, \ell(p))$-circle.
An interaction (or collision) in $M$ is a point $x$ in the singular set which is not on a singular line.
\end{defi}

Consider point $x$ in a singular line. Then, by definition, a neighborhood $U$ of $x$ is isomorphic to the suspension $e(\Sigma_x)$
where the HS-sphere $\Sigma_x$ is the suspension of a $(\Gamma_p, \ell(p))$-circle $L$. The suspension ${\mathfrak e}(L)$
contains precisely two opposite points $\bar{p}$ and $-\bar{p}$. Each of them defines a ray in $U$, and every point $x'$
in these rays are singular points, whose links are also described by the same singular HS-sphere ${\mathfrak e}(L)$.

Therefore, we can define the type of the singular line: it is the type of the $(\Gamma_p, \ell(p))$-circle describing the singularity
type of each of its elements. Therefore, a singular line is time-like, space-like or light-like, and it has a degree.

On the other hand, when $x$ is an interaction, then the HS-sphere $\Sigma_x$
is not the suspension of a $(\Gamma_p, \ell(p))$-circle.
Let $\bar{p}$ be a singularity of $\Sigma_x$. It defines in $e(\Sigma)$ a ray, and for every $y$ in this ray, the link of $y$ is isomorphic
to the suspension $e(L)$ of the $(\Gamma_p, \ell(p))$-circle defining the singular point $\bar{p}$.

It follows that the interactions form a discrete closed subset. In the neighborhood of
an interaction, with the interaction removed, the singular set is an union of singular lines, along which the singularity-type is constant
(however see Remark~\ref{rk:p-p}).

\subsection{Classification of singular lines}
\label{sub:classiline}
The classification of singular lines, \textit{i.e.} of $(\Gamma_p, \ell(p))$-circles, follows from the classification of singularities of
singular HS-surfaces:

\begin{itemize}
\item \textit{time-like lines,}
\item \textit{space-like or light-like line of degree $2$,}
\item \textit{BTZ-like singular lines,} extreme or not, past or future,
\item \textit{Misner lines,}
\item \textit{space-like or light-like line of degree $k\geq4$.} Recall that the degree is necessarily even.
\end{itemize}

Indeed, according to Remark~\ref{rk:p-p}, what could have been called a cuspidal singular line, is actually included
an extreme BTZ-like singular line.

\subsection{Local future and past of singular points}
\label{sub.futpast}
In the previous section, we almost completed the proof of Proposition~\ref{pro:classising}, except that we still have to
describe, as stated in this proposition, what is the future and the past of the singular line (in particular, that the future and the
past of non-time-like lines
of degree $k \geq 4$ has $k/2$ connected components), and to see that Misner lines are surrounded by closed causal curves.

Let $M$ be a singular AdS-manifold $M$. Outside the singular set, $M$ is isometric to an AdS manifold.
Therefore one can define as usually the notion of time-like or causal curve, at least outside singular points.

If $x$ is a singular point, then a neighborhood $U$ of $x$ is isomorphic to the suspension of a singular HS-surface $\Sigma_x$.
Every point in $\Sigma_x$, singular or not, is the direction of a line $\ell$ in $U$ starting from $x$. When $x$ is singular,
$\ell$ is a singular line, in the meaning of definition~\ref{def:singline}; if not, $\ell$, with $x$ removed, is a geodesic segment.
Hence, we can extend the notion of causal curves, allowing them to cross an interaction or a space-like singular line, or
to go for a while along a time-like or a light-like singular line.

Once introduced this notion
one can define the future $I^+(x)$ of a point $x$ as the set of final extremities of future oriented time-like
curves starting from $x$. Similarly, one defines the past $I^-(x)$, and the causal past/future $J^\pm(x)$.

Let $\HH^+_{x}$ (resp. $\HH^-_{x}$) be the set of future (resp. past) time-like elements of the HS-surface $\Sigma_{x}$.
It is easy to see that the local future of $x$ in $e(\Sigma_{x})$, which is locally isometric to $M$, is the open domain
$e(\HH^+_{x}) \subset e(\Sigma_{x})$. Similarly, the past of $x$ in $e(\Sigma_{x})$ is $e(\HH^-_{x})$.

It follows that the causality relation in the neighborhood of a point in a time-like singular line has
the same feature than the causality relation near a regular point: the local past and the local future are non-empty
connected open subsets, bounded by light-like geodesics.
The same is true for a light-like or space-like singular line of degree exactly 2.

On the other hand, points in a future BTZ-like singularity, extreme or not, have no future, and only one past
component. This past component is moreover isometric to the quotient of the past of a point in
$\uAdS_3$ by a hyperbolic (parabolic in the extreme case) isometry fixing the point. Hence, it is
homeomorphic to the product of an annulus by the real line.

If $L$ has degree $k \geq 4$, then the local future of a singular point in $e(\mathfrak{e}(L))$ admits $k/2$ components,
hence at least $2$, and the local past as well.
This situation is quite unusual, and in our further study we exclude it: \textbf{from now, we always assume that
light-like or space-like singular lines have degree $0$ or $2$.}

Points in Misner singularities have no future, and no past. Besides, any neighborhood of such a point
contains closed time-like curves (CTC in short). Indeed, in that case, $\mathfrak{e}(L)$ is obtained by glueing the
two space-like sides of a bigon entirely contained in the de Sitter region $\dS^2$ by some isometry $g$, and for every point $x$
in the past side of this bigon, the image $gx$ lies in the future of $x$: any time-like curve joining $x$ to $gx$ induces a CTC in $\mathfrak{e}(L)$.
But:

\begin{lemma}
\label{le.CTC}
Let $\Sigma$ be a singular HS-surface. Then the singular AdS-manifold $e(\Sigma)$ contains closed causal curves (CCC in short) if and only if
the de Sitter region of $\Sigma$ contains CCC. Moreover, if it is the case, every neighborhood of the vertex of $e(\Sigma)$
contains a CCC of arbitrarly small length.
\end{lemma}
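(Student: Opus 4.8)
The plan is to compute the suspension metric in geodesic polar coordinates around the vertex $\bar p$ and read off the causal structure region by region. Fixing a unit representative of each direction in $\HS^2\approx L(\bar p)$, write a point of $e(\Sigma)\setminus\{\bar p\}$ as $\exp_{\bar p}(r\,u)$ with $u$ ranging over the regular part of $\Sigma$ and $r$ the arclength parameter. Using $\AdS_3=\{q_{2,2}=-1\}$ and the explicit geodesics $\gamma_u(r)=\cos r\,\bar p+\sin r\,u$ for $u\in\HH^2_\pm$ and $\gamma_u(r)=\cosh r\,\bar p+\sinh r\,u$ for $u\in\dS^2$, one obtains the warped forms
\[ g = -dr^2+\sin^2 r\,g_{\HH^2}\ \text{ on }\ e(\HH^2_\pm),\qquad g = dr^2+\sinh^2 r\,g_{\dS^2}\ \text{ on }\ e(\dS^2), \]
with $g$ degenerate along the light-like cones $e(\partial\HH^2_\pm)$. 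The crucial point is the sign of $dr^2$: in the de Sitter region $\partial_r$ is \emph{space-like}, so a vector $a\,\partial_r+X$ (with $X$ tangent to the $\dS^2$ factor) is causal only if $a^2+\sinh^2 r\,g_{\dS^2}(X,X)\le0$, forcing $g_{\dS^2}(X,X)\le0$. Thus a causal vector in $e(\dS^2)$ projects, under the radial projection $\pi\colon e(\Sigma)\setminus\{\bar p\}\to\Sigma$, to a causal vector of the de Sitter region; by contrast, in $e(\HH^2_\pm)$ the coordinate $r$ is a genuine time function.

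For the implication ``$\Leftarrow$'', suppose the de Sitter region of $\Sigma$ carries a closed causal curve $\gamma$. Its radial lift at fixed radius $r_0>0$, namely $c_{r_0}(t):=\exp_{\bar p}(r_0\,\gamma(t))$, is a loop in $e(\dS^2)$ whose velocity is purely horizontal, so $g(\dot c_{r_0},\dot c_{r_0})=\sinh^2 r_0\,g_{\dS^2}(\dot\gamma,\dot\gamma)\le0$ and $c_{r_0}$ is a closed causal curve in $e(\Sigma)$. As $r_0\to0$ the loop collapses to $\bar p$ with length $O(\sinh r_0)\to0$ in any auxiliary Riemannian metric; this yields at once the closed causal curves of arbitrarily small length in every neighbourhood of the vertex.

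For the converse, let $c$ be a closed causal curve in $e(\Sigma)$. The key claim is that $c$ meets neither open cone $e(\HH^2_\pm)$ nor the vertex. Indeed $\overline{e(\HH^2_+)}$ is the causal future of $\bar p$, hence future-closed, and carries the time function $r$, non-decreasing along future causal curves and strictly increasing in the interior; were $c$ to enter $e(\HH^2_+)$ it would remain in this set for the rest of its period while $r$ strictly increased, contradicting that $c$ is a loop. The time-reversed argument excludes $e(\HH^2_-)$, and since the future of $\bar p$ is $e(\HH^2_+)$ the curve cannot pass through $\bar p$ either. Therefore $c$ lies in $e(\overline{\dS^2})$ and avoids $\bar p$, so $\bar c:=\pi\circ c$ is a loop in $\overline{\dS^2}\subset\Sigma$. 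By the computation above $\bar c$ is causal wherever $c$ is in $e(\dS^2)$, and by continuity along the null circles $\partial\HH^2_\pm$; it is non-constant since causal vectors in $e(\dS^2)$ have nonzero horizontal part. A small causal perturbation pushing it off the light-like circles into the open de Sitter region produces the desired closed causal curve in the de Sitter region of $\Sigma$.

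The main obstacle is exactly the key claim of the last paragraph: proving rigorously that a closed causal curve cannot penetrate the time-like cones, and in particular controlling the behaviour on the light-like boundary cones $e(\partial\HH^2_\pm)$, where $g$ degenerates and $r$ ceases to be a smooth time function. One must check that $r$ (or a suitable substitute, such as the cosmological time measured from $\bar p$) stays monotone along causal curves up to and across these null cones, and that the radial projection remains causal in the limit; this is the step where the suspension structure, rather than a soft causality argument, is genuinely needed. The remaining ingredients — the warped-product computation and the fixed-radius lift — are routine.
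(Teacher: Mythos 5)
Your proof is correct and takes essentially the same route as the paper's: the same warped-product form of the suspension metric, the same time-function/trapping argument excluding closed causal curves from the two timelike cones $e(\HH^2_\pm)$, the same fixed-radius conformal lift (the paper's map $f_\epsilon$, with scale factor $\sinh r_0\to 0$) producing arbitrarily short CCCs, and the same radial projection for the converse. The only minor difference is your perturbation step for curves touching the null cones $e(\partial\HH^2_\pm)$; this case is in fact vacuous, since a future-directed causal curve meeting $J^+(\bar p)=\overline{e(\HH^2_+)}$ is trapped there (on the cone itself the only causal directions are the radial rulings), a point the paper's proof also passes over silently.
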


\begin{proof}
Let $\bar{p}$ be the vertex of $e(\Sigma)$. Let $\HH^\pm_{\bar{p}}$ denote the future and past hyperbolic part
of $\Sigma$, and let $\dS_{\bar{p}}$ be the de Sitter region in $\Sigma$.
As we have already observed, the future of $\bar{p}$ is the suspension $e(\HH^+_{\bar{p}})$.
Its boundary is ruled by future oriented lighlike lines, singular or not. It follows, as in the regular case, that any
future oriented time-like line entering in the future of $\bar{p}$ remains trapped therein and cannot escape anymore:
such a curve cannot be part of a CCC. Furthermore, the future $e(\HH^+_{\bar{p}})$ is isometric to the product
$\HH^+_{\bar{p}} \times ]0, +\infty[$ equipped with the singular Lorentz metric
$-dt^2 + \cos^2(t) g_{hyp}$, where
$g_{hyp}$ is the singular hyperbolic metric with cone singularities on $\HH^+_{\bar{p}}$ induced by the HS-structure.
The coordinate $t$ induces a time function, strictly increasing along causal curves. Therefore, $e(\HH^+_{\bar{p}})$
contains no CCC.

It follows that CCC in $e(\Sigma)$ avoid the future of $\bar{p}$. Similarly, they avoid the past of $\bar{p}$: every CCC
are entirely contained in the suspension $e(\dS^2_{\bar{p}})$ of the de Sitter region of $\Sigma$.

For any real number $\epsilon$, let $f_\epsilon: \dS^2_{\bar{p}} \to e(\dS^2_{\bar{p}})$ be the map associating to $v$ in
the de Sitter region the point at distance $\epsilon$ to $\bar{p}$ on the space-like geodesic $r(v)$. Then the image of $f_\epsilon$ is a singular Lorentzian submanifold
locally isometric to the de Sitter space rescaled by a factor $\lambda(\epsilon)$. Moreover, $f_\epsilon$ is a conformal isometry: its
differential multiply by $\lambda(\epsilon)$ the norms of tangent vectors.
Since $\lambda(\epsilon)$ tends to $0$ with $\epsilon$, it follows that if $\Sigma$
has a CCC, then $e(\Sigma)$ has a CCC of arbitrarily short length.

Conversely, if $e(\Sigma)$ has a CCC, it can be projected along the radial directions
on a surface corresponding to a fixed value of $t$, keeping it causal, as can be seen
from the explicit form of the metric on $e(\Sigma)$ above. It follows that, when
$e(\Sigma)$ has a CCC, $\Sigma$ also has one. This finishes the proof of the lemma.
\end{proof}

The proof of Proposition~\ref{pro:classising} is now complete.

From now, we
will restrict our attention to HS-surfaces without CCC and  corresponding to singular points where the fure and the past,
if non-empty, are connected:

\begin{defi}
\label{def:Hcausal}
A singular HS-surface is causal if it admits no singularity of degree $\geq 4$ and no CCC.
A singular line is causal if the suspension $\mathfrak{e}(L)$ of the associated $(\Gamma_p, \ell(p))$-circle $L$ is causal.
\end{defi}

In other words, a singular HS-surface is causal if the following singularity types are excluded:

\begin{itemize}
\item space-like or light-like singularities of degree $\geq 4$,
\item Misner singularities.
\end{itemize}


\subsection{Geometric description  of HS-singularities and AdS singular lines}
\label{sub:desgeom}
The approach of singular lines we have given so far has the advantage to be systematic, but is quite abstract. In this section,
we give cut-and-paste constructions of singular AdS-spacetimes which provide a better insight on the geometry of  AdS singularities.

\subsubsection{Massive particles}
\label{sub:massiveparticle}
Let $D$ be a domain in
$\uAdS_3$ bounded by two time-like totally geodesic half-planes $P_1$, $P_2$
sharing as common boundary a time-like geodesic $c$. The angle $\theta$ of $D$ is
the angle between the two geodesic rays $H \cap P_1$, $H \cap P_2$
issued from $c \cap H$, where $H$ is a totally geodesic hyperbolic plane
orthogonal to $c$. Glue $P_1$ to $P_2$ by the elliptic isometry of
$\uAdS_3$ fixing $c$ pointwise. The
resulting space, up to isometry,
only depends on $\theta$, and not on the choices of $c$
and of $D$ with angle $\theta$.
The complement of $c$ is locally modeled on $\AdS_3$, while
$c$ corresponds to a cone singularity with some cone angle $\theta$.

We can also consider a domain $D,$ still bounded by two time-like planes,
but not embedded in $\uAdS_3$, wrapping around $c$, maybe several time, by an angle $\theta > 2\pi$.
Glueing as above, we obtain a singular spacetime with angle $\theta > 2\pi$.

In these examples, the singular line is a time-like singular line, and all time-like singular lines
are clearly produced in this way.

\begin{remark}
\label{rk:masspositive}
There is an important litterature in physics involving such singularities, in the AdS background like here or in
the Minkowski space background, where they are called wordlines, or cosmic strings, describing a massive
particle in motion, with mass $m := 1 - \theta/2\pi$. Hence $\theta > 2\pi$
corresponds to particles with negative mass - but they are usually not considered in physics.
See for example \cite[p. 41-42]{carlip}. Let us mention in particular a famous example by R. Gott in
\cite{gott}, followed by several papers (for example, \cite{grant}, \cite{carroll}, \cite{steif})
where it is shown that a (flat) spacetime containing two such singular lines may present some causal pathology at large scale.
\end{remark}





\subsubsection{Tachyons}
\label{sub.tachyon}
Consider a space-like geodesic $c$  in $\uAdS_3$, and two time-like
totally geodesic planes $Q_1$, $Q_2$ containing $c$.
We will also consider the two light-like totally geodesic subspaces $L_1$ and $L_2$ of $\uAdS_3$
containing $c$, and, more generally, the space $\mathcal P$ of totally geodesic subspaces containing $c$.
Observe that the future of $c$, near $c$, is bounded by $L_1$ and $L_2$.

We choose an orientation of $c$: the orientation of $\uAdS_3$ then induces a (counterclockwise) orientation on
$\mathcal P$, hence on every loop turning around $c$.
We choose the indexation of the various planes $Q_1$, $Q_2$, $L_1$ and $L_2$ such that every loop turning counterclockwise around $x$,
enters in the future of $c$ through $L_1$, then crosses successively $Q_1$, $Q_2$, and finally exits
from the future of $c$ through $L_2$. Observe that if we had considered the past of $c$ instead of the
future, we would have obtained the same indexation. Furthermore, this indexation does not depend
on the orientation on $c$ selected initially.

The planes $Q_1$ and $Q_2$ intersect each other
along infinitely many space-like geodesics, always under the same angle. In each of these planes,
there is an open domain $P_i$ bounded by $c$ and another component $c_+$ of
$Q_1 \cap Q_2$ in the future of $c$ and which does not intersect another component
of $Q_1 \cap Q_2$. The component $c_+$ is a space-like geodesic, which can also be defined
as the set of first conjugate points in the future of points in $c$ (cf. the end of section~\ref{sub:backgroundads}).

The union $c \cup c_+ \cup P_1 \cup P_2$ disconnects
$\uAdS_3$. One of these components, denoted $W$, is contained in the future of $c$
and the past of $c_+$. Let $D$ be the other component, containing the future of
$c_+$ and the past of $c$. Consider the closure of $D$, and
glue $P_1$ to $P_2$ by a hyperbolic isometry of $\uAdS_3$ fixing every point
in $c$ and $c_+$. The resulting spacetime 
contains two space-like singular
lines, still denoted by $c$, $c_+$, and is locally modeled on $\AdS_3$ on the complement of these lines.

Clearly, these singular lines are space-like singularities, isometric to the singularities associated to
a space-like $(\Gamma_p, \ell(p))$-circle $L$ of degree two. We claim futhermore that $c$ is positive.
Indeed, the $(\Gamma_p, \ell(p))$-circle $L$ is naturally identified with
$\mathcal P$. Our choice of indexation implies that the left extremity of $i^+(L)$ is  $L_1$. Since
the holonomy send $Q_1$ onto $Q_2$, the left extremity $L_1$ is a repelling fixed point of the holonomy. Therefore, the singular line
corresponding to $c$ is positive according to our terminology.

On the other hand, a similar reasoning shows that the space-like singular line $c_+$ is \textit{negative}. Indeed,
the totally geodesic plane $L_1$ does not correspond anymore to the left extremities of the time-like components in
the $(\Gamma_p, \ell(p))$-circle associated to $c_+$, but to the right extremities.

\begin{remark}
\label{rk:positive/negative}
Consider a time-like geodesic $\ell$ in $\uAdS_3$, hitting the boundary of the future of $c$ at a point in $P_1$.
This geodesic corresponds to a time-like geodesic $\ell'$ in the singular spacetime defined by our cut-and-paste surgery
which coincides with $\ell$ before crossing $P_1$, and, after the crossing, with the image $\ell'$ of $\ell$ by the holonomy.
The direction of $\ell'$ is closer to $L_2$ than was $\ell$.

In other words, the situation is as if the singular line $c$ were attracting the lightrays,
\textit{i.e.} had positive mass.
This is the reason why we call $c$ a \textit{positive} singular line (section~\ref{sub:positive}).
\end{remark}


There is an alternative description of these singularities: start again from a space-like geodesic $c$ in $\uAdS_3$,
but now consider two space-like half-planes $S_{1}$, $S_{2}$ with common boundary $c$, such that
$S_{2}$ lies above $S_{1}$, \textit{i.e.} in the future of $S_{1}$. Then remove the intersection $V$ between
the past of $S_{2}$ and the future
of $S_{1}$, and glue $S_{1}$ to $S_{2}$ by a hyperbolic isometry fixing every point in $c$.
The resulting singular
spacetime contains a singular space-like line.
It should be clear to the reader that this singular line is space-like of degree $2$ and negative.
If instead of removing a wedge $V$ we insert it in the spacetime obtained by cutting
$\uAdS_3$ along a space-like half-plane $S$,
we obtain a spacetime with a positive space-like singularity of degree $2$.

Last but not least, there is another way to construct space-like singularities of degree $2$.
Given the space-like geodesic $c$, let $L_1^+$ be the future component of $L_1 \setminus c$.
Cut along $L_1^+$, and glue back by a hyperbolic
isometry $\gamma$ fixing every point in $c$. More precisely, we consider the singular spacetime
such that for every future oriented time-like curve in $\uAdS_3 \setminus L_1^+$ terminating at $L_1^+$ at a point $x$
can be continued in the singular spacetime by a future oriented time-like curve starting from $\gamma x$.
Once more, we obtain a singular AdS-spacetime containing a space-like singular line of degree $2$.
We leave to the reader the proof of the following fact:
\textit{the singular line is positive mass if and only if for every $x$ in $L_1^+$ the light-like
segment $[x, \gamma x]$ is past-oriented,\/} \textit{i.e.} $\gamma$ sends every point in $L_1^+$ in
its own causal past.

\begin{remark}
\label{rk.cuttachyon2}
As a corollary we get the following description space-like HS-singularities of degree $2$:
consider a small disk $U$ in $\dS^2$ and a point $x$ in $U$. Let $r$ be one light-like geodesic ray
contained in $U$ issued from $x$, cut along it and glue back by a hyperbolic $\dS^2$-isometry $\gamma$
like described in figure~\ref{fig.tachyon} (be careful that in this figure, the isometry, glueing the future copy of
$r$ in the boundary of $U \setminus r$ into the past copy of $r$; hence $\gamma$ is the inverse of the holonomy).
Observe that one cannot match one side on the other, but the resulting space
is still homeomorphic to the disk. The resulting HS-singularity is space-like, of degree $2$. If $r$ is future
oriented, the singularity is positive if and only if for every $y$ in $r$ the image $\gamma y$ lies
in the future of $y$. If $r$ is past oriented, the singularity is positive
if and only if $\gamma y$ lies in the past of $y$ for every $y$ in $r$.

\begin{figure}[ht]
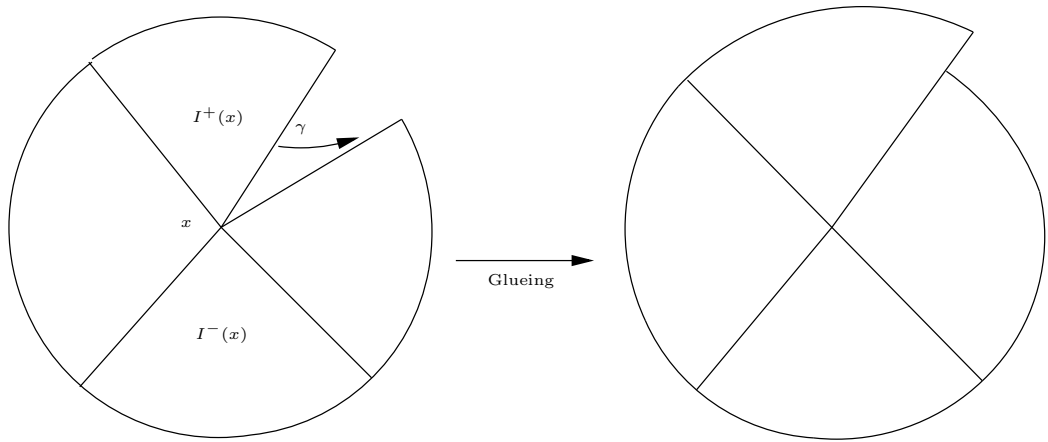

\begin{center}
\input Chirtachyon.pstex_t
\end{center}
\caption{Construction of a positive space-like singular line of degree $2$.}
\label{fig.tachyon}
\end{figure}
\end{remark}

\begin{remark}
As far as we know, this kind of singular lines is not considered in
physics litterature. However, it is a very natural extension of the notion of massive particles.
It sounds to us natural to call these singularities, representing particles faster than light, \textit{tachyons,}
which can be positive or negative, depending on their influence on lightrays.
\end{remark}

\begin{remark}
\label{rk:degk}
space-like singularity of any (even) degree $2k$ can be constructed as $k$-branched cover of a space-like singularity of
degree $2$. In other words, they are obtained by identifying $P_1$ and $P_2$, but now seen as
the boundaries of a wedge turning $k$ times around $c$.
\end{remark}


\subsubsection{Misner singularities}
\label{sub:misner}
Let $S_1$, $S_2$ be two space-like half-planes with common boundary as appearing in the second version
of definition of tachyons in the previous section. Now, instead of removing the intersection $V$ between the future
of $S_1$ and the past of $S_2$, keep it and remove the other part (the main part!) of $\uAdS_3$. Glue its two boundary components
$S_1$, $S_2$ by an AdS-isometry fixing $c$ pointwise. The reader will easily convince himself that the resulting spacetime
contains a space-like line of degree $0$, \textit{i.e.} what we have called a Misner singular line.

The reason of this terminology is that this kind of singularity is oftently considered, or mentioned\footnote{Essentially
because of their main feature pointed out in section~\ref{sub.futpast}: they are surrounded by CTC.},
in papers dedicated to
gravity in dimension $2+1$, maybe most of the time in the Minkowski background, but also in the AdS background.
They are attributed to Misner who considered the $3+1$-dimensional analog of this spacetime
(for example, the glueing is called ``Misner identification''
in \cite{deser}; see also \cite{gottli}).

\subsubsection{BTZ-like singularities}
\label{sub:btz}
Consider the same data $(c, c_{+}, P_{1}, P_{2})$ used for the description of tachyons, \textit{i.e.} space-like singularities,
but now remove $D$,
and glue the boundaries $P_{1}$, $P_{2}$ of $W$ by a hyperbolic element $\gamma_{0}$ fixing every point in $c$.
The resulting space is a manifold $\mathcal{B}$ containing two singular lines, that we abusively
still denote $c$ and $c_+$, and is locally $\AdS_3$ outside $c$, $c_+$.
Observe that every point of $\mathcal{B}$ lies in the past of the singular line corresponding
to $c_+$ and in the future of the singular line corresponding to $c$. It follows easily that $c$ is
a BTZ-like past singularity, and that $c_+$ is a BTZ-like future singularity.

\begin{remark}
Let $E$ be the open domain in $\uAdS_3$, intersection between the future of $c$ and the past of $c_{+}$. Observe that
$\overline{W} \setminus P_1$ is a fundamental domain for the action on $E$ of the group $\langle\gamma_{0}\rangle$ generated
by $\gamma_0$. In other words, the regular part of $\mathcal{B}$ is isometric
to the quotient $E/\langle\gamma_{0}\rangle$. This quotient is precisely a \textit{static BTZ black-hole}
as first introduced by Ban{\~a}dos, Teitelboim and Zanelli in \cite{BTZ} (see also \cite{barbtz1, barbtz2}).
It is homeomorphic to the product of the annulus by the real
line. The singular spacetime $\mathcal{B}$ is obtained by adjoining to this BTZ black-hole
two singular lines: this follows that $\mathcal{B}$ is homeomorphic to the product of a 2-sphere with the real line
in which $c_+$ and $c$ can be naturally considered respectively as the future singularity and the past singularity.
This is the explanation of the ``BTZ-like'' terminology.
More details will be given in section~\ref{sub.bhfrancesco}.
\end{remark}

\begin{remark}
This kind of singularity appears in several papers in the physics litterature.
We point out among them the excellent paper \cite{matschull} where the Gott's construction
quoted above is
adapted to the AdS case, and where is provided a complete and very subtle description of singular AdS-spacetimes
interpreted as the creation of a BTZ black-hole by a pair of light-like particles, or by a pair of massive particles.
In our terminology, these spacetimes contains three singularities: a pair of light-like or time-like positive singular lines,
and a BTZ-like future singularity. These examples shows that even if all the singular lines are causal, in the sense
of Definition~\ref{def:Hcausal}, a singular spacetime may exhibit big CCC due to a more global phenomenon.
\end{remark}

\subsubsection{light-like and extreme BTZ-like singularities}
\label{sub.graviton}
The definition of a light-like singularity is similar to that of space-like singularities of degree $2$ (tachyons),
but starts with the choice of a \textit{light-like} geodesic $c$ in $\uAdS_3$. Given such
a geodesic, we consider another light-like geodesic $c_+$ in the future of $c$, and two disjoint time-like
totally geodesic annuli $P_1$, $P_2$ with boundary $c \cup c_+$.


More precisely, consider pairs of space-like geodesics $(c^n, c^n_+)$ as those appearing
in the description of tachyons, contained in time-like planes $Q^n_1$, $Q^n_2$, so that
$c^n$ converge to the light-like geodesic $c$. Then, $c^n_+$ converge to a light-like geodesic $c_+$, whose past extremity
in the boundary of $\uAdS_3$ coincide with the future extremity of $c$. The time-like planes
$Q^n_1$, $Q^n_2$ converge to time-like planes $Q_1$, $Q_2$ containing $c$ and $c_+$. Then
$P_i$ is the annulus bounded in $Q_i$ by $c$ and $c_+$.
Glue the boundaries $P_1$ and $P_2$ of the component $D$ of $\uAdS_3 \setminus (P_1 \cup P_2)$
containing the future of $c$ by an isometry of $\uAdS_3$ fixing every point in $c$ (and in $c_+$): the
resulting space is a singular AdS-spacetime, containing two singular lines, abusely denoted by
$c$, $c_+$. As in the case of tachyons, we can see that these singular lines have degree $2$,
but they are light-like instead of space-like. The line $c$ is called \textit{positive,} and $c_+$ is \textit{negative.}



Similarly to what happens for tachyons, there is an alternative way to construct light-like singularities.
Let $L^+$ be the unique light-like half-plane bounded by $c$ and contained in the causal future of $c$.
Cut $\uAdS_3$ along $L^+$, and glue back by an isometry $\gamma$ fixing pointwise $c$:
the result is a singular spacetime containing a light-like singularity of degree $2$. Furthermore,  \textit{this light-like singularity is positive if and only
if $\gamma$ sends every point of $L^+$ in its own causal past.}

Finally, extreme BTZ-like singularities can be described in a way similar to what we have done for
(non extreme) BTZ-like singularities. As a matter of fact, when we glue the wedge $W$ between $P_1$ and
$P_2$ we obtain a (static) extreme BTZ black-holes as described in \cite{BTZ} (see also
\cite[section~3.2, section~10.3]{barbtz2}).
Further comments and details are left to the reader

\begin{remark}
\label{rk:photon}
light-like singularities of degree $2$ appear very frequently in physics, where they are called
wordlines, or cosmic strings, of massless particles, or even sometimes ``photons'' (\cite{deser}).
\end{remark}

\begin{remark}
As in the case of tachyons (see Remark~\ref{rk:degk}) one can construct light-like singularities
of any degree $2k$ by considering a wedge turning $k$ times around $c$ before glueing its
boundaries.
\end{remark}

\begin{remark}
\label{rk:positive/negative2}
A study similar to what has been done in Remark~\ref{rk:positive/negative}, one observes that positive photons attract
lightrays, whereas negative photons have a repelling behavior.
\end{remark}

\begin{remark}
However, there is no positive/negative dichotomy for BTZ-like singularities, extreme or not.
\end{remark}

\begin{remark}
From now on, we allow ourselves to qualify HS-singularities according to the nature of the associated
AdS-singular lines: an elliptic HS-singularity is a (massive) particle, a
space-like singularity is a tachyon, positive or negative, etc...
\end{remark}

\begin{remark}
\label{rk.cheminsing}
Let $[p_1, p_2]$ be an oriented arc in $\partial\HH^2_+$, and for every $x$ in $\HH^2_+$ consider the elliptic singularity
(with positive mass) obtained by removing the wedge comprising geodesic rays issued from $x$ and with extremity in $[p_1, p_2]$ and
glueing back by an elliptic isometry. Move $x$ until it reaches a point $x_\infty$ in $\partial\HH^2 \setminus [p_1, p_2]$.
It provides a continuous deformation of an elliptic singularity to a light-like singularity, which can be continued further into $\dS^2$
by a continuous sequence of space-like singularities.
Observe that the light-like (resp. space-like) singularities appearing in this continuous family are positive (resp. have positive mass).
\end{remark}

\subsection{Positive HS-surfaces}
\label{sub:positive}
Among singular lines, i.e. ``particles'', we can distinguish the ones having an attracting behavior on lightrays (see Remark~\ref{rk:masspositive},
\ref{rk:positive/negative}, \ref{rk:positive/negative2}):

\begin{defi}
A HS-surface, an interaction or a singular line is
\textit{positive} if all space-like and light-like singularities of degree $\geq 2$ therein are positive, and if all time-like singularities
have a cone angle less that $2\pi$.
\end{defi}



\section{Particle interactions and convex polyhedra}
\label{sc:polyhedra}

This short section briefly describes a relationship between
interactions of particles in $3$-dimensional AdS manifolds, 
HS-structure on the sphere, and convex polyhedra in $\HS^3$, the natural
extension of the hyperbolic $3$-dimensional by the de Sitter space. 

Convex polyhedra in $\HS^3$ provide a convenient way to visualize 
a large variety of particle interactions in AdS manifolds (or 
more generally in Lorentzian 3-manifolds). This section should provide
the reader with a wealth of examples of particle interactions -- 
obtained from convex polyhedra in $\HS^3$ -- exhibiting various 
interesting behavior. It should then be easier to follow the
classification of positive causal $HS$-surfaces in the next section.

The relationship between convex polyhedra and particle interactions
might however be deeper than just a convenient way to construct
examples. It appears that many, and possibly all,
particle interactions in an AdS manifold satisfying some natural 
conditions correspond to a unique
convex polyhedron in $\HS^3$. 
This deeper aspect of the relationship
between particle interactions and convex polyhedra is described 
in Section \ref{ssc:converse}
only in a special case: interactions between only massive
particles and tachyons. It appears likely that it extends to a 
more general context, however it appears preferable to restrict
those considerations here to a special case which, although 
already exhibiting interesting phenomena, avoids the technical
complications of the general case. 

\subsection{The space $\HS^3$}

The definition used above for $\HS^2$ can be extended as it is to
higher dimensions. So $\HS^3$ is the space of geodesic rays starting from $0$ in 
the four-dimensional Minkowskip space $\R^{3,1}$. It admits a natural action of
$SO_0(1,3)$, and has a decomposition in 5 connected components:
\begin{itemize}
\item The ``upper'' and ``lower'' hyperbolic components, denoted by $H^3_+$ and $H^3_-$,
corresponding to the future-oriented and past-oriented time-like rays. 
On those two components the angle between geodesic rays corresponds to the 
hyperbolic metric on $H^3$.
\item The domain $dS_3$ composed of space-like geodesic rays.
\item The two spheres $\dr H^3_+$ and $\dr H^3_-$ which are the boundaries of 
$H^3_+$ and $H^3_-$, respectively. We call $Q$ their union.
\end{itemize}

There is a natural projective model of $\HS^3$ in the double cover of $\rp^3$ -- 
we have to use the double cover because $\HS^3$ is defined as a space of geodesic
rays, rather than as a space of geodesics containing $0$. This model has the
key properties that projective lines correspond to the hyperbolic and to the 
de Sitter geodesics in the $H^3_\pm$ and the $dS_3$ components of $\HS^3$.

Note that there is a danger of confusion with the notations used in \cite{shu}, since
the space which we call $\HS^3$ here is denoted by $\HSt^3$ there, while the space 
$\HS^3$ in \cite{shu} is the quotient of the space $\HS^3 $ considered here by the
antipodal action of $\Z/2\Z$.

\subsection{Convex polyhedra in $\HS^3$}

In all this section we consider convex polyhedra in $\HS^3$ but will always
suppose that they do not have any vertex on $Q$. We now consider such a 
polyhedron, calling it $P$.

The geometry induced on the boundary of $P$
depends on its position relative to the two hyperbolic components of $\HS^3$,
and we can distinguish three types of polyhedra.

\begin{figure}[ht]
\begin{center}
\psfig{figure=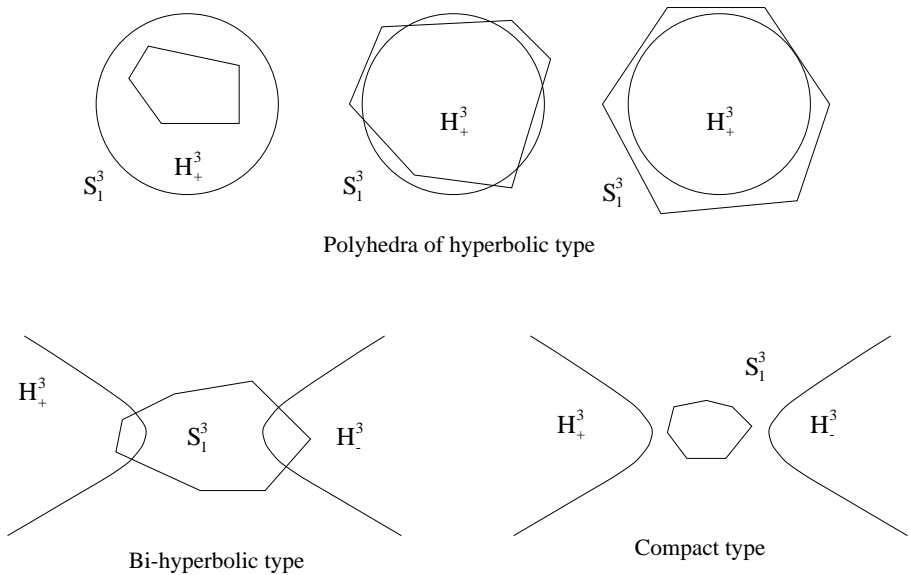,width=12cm}
\end{center}
\caption{Three types of polyhedra in $\HS^3$.}
\label{fig:3poly}
\end{figure}

\begin{itemize}
\item polyhedra of {\it hyperbolic} type intersect one of the hyperbolic
components of $\HS^3$, but not the other. We find for instance in this group:
\begin{itemize}
\item the usual, compact hyperbolic polyhedra, entirely contained in one
of the hyperbolic components of $\HS^3$,
\item the ideal or hyperideal hyperbolic polyhedra,
\item the duals of compact hyperbolic polyhedra, which contain one of the
hyperbolic components of $\HS^3$ in their interior.
\end{itemize}
\item polyhedra of {\it bi-hyperbolic} type intersect both hyperbolic components
of $\HS^3$,
\item polyhedra of {\it compact} type are contained in the de Sitter component of 
$\HS^3$.
\end{itemize}
The terminology used here is taken from \cite{cpt}.

We will see below that polyhedra of bi-hyperbolic type
play the simplest role in relation to particle interactions: 
they are always related to the simpler interactions involving only massive
particles and tachyons. Those of hyperbolic type are (sometimes) related to particle
interactions involving a black hole or a white hole. Polyhedra of compact type are 
the most exotic when considered in relation to particle interactions and will
not be considered much here, for reasons which should appear clearly below.

\subsection{Induced $HS$-structures on the boundary of a polyhedron}

We now consider the geometric structure induced on the boundary of a 
convex polyhedron in $\HS^3$. Those geometric structures have been 
studied in \cite{shu,cpt}, and we will partly rely on those references,
while trying to make the current section as self-contained as possible. 
Note however that the notion of $HS$ {\it metric} used in \cite{shu,cpt}
is more general than the notion of $HS$-structure considered here.

In fact the geometric structure induced on the boundary of a convex
polyhedron $P\subset \HS^3$ is an $HS$-structure in some, but not
all, cases, and the different types of polyhedra behave differently in
this respect.

\subsubsection{Polyhedra of bi-hyperbolic type}

This is the simplest situation: the induced geometric structure is 
{\it always} a causal positive singular $HS$-structure.

The geometry of the induced geometric structure on those polyhedra
is described in \cite{cpt}, under the condition that there there
is no vertex on the boundary at infinity of the two hyperbolic components
of $\HS^3$. The boundary of $P$ can be decomposed in three components:
\begin{itemize}
\item A ``future'' hyperbolic disk $D_+:=\partial P \cap H^3_+$, on 
which the induced metric is hyperbolic (with cone singularities at 
the vertices) and complete.
\item A ``past'' hyperbolic disk $D_-=\partial P\cap H^3_-$, similarly
with a complete hyperbolic metric.
\item A de Sitter annulus, also with cone singularities at the vertices
of $P$.
\end{itemize}

In other terms, $\partial P$ is endowed with an $HS$-structure. Moreover
all vertices in the de Sitter part of the $HS$-structure have degree $2$.

A key point is that the convexity of $P$ implies directly that this
$HS$-structure is positive: the cone angles are less than $2\pi$
at the hyperbolic vertices of $P$, while the positivity condition
is also satisfied at the de Sitter vertices. This can be checked
by elementary geometric arguments or can be found in 
\cite[Definition 3.1 and Theorem 1.3]{cpt}.

\subsubsection{Polyhedra of hyperbolic type}

In this case the induced geometric structure
is {\it sometimes} a causal positive $HS$-structure.
The geometric structure on those polyhedra is described in \cite{shu},
again when $P$ has no vertex on $\dr H^3_+\cup \dr H^3_-$.

\begin{figure}[ht]
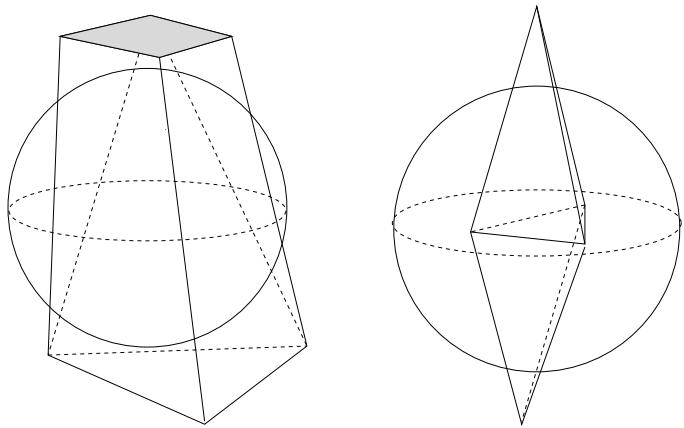

\input hyperbolic1.pstex_t 
\hspace{1cm}
\input hyperbolic2.pstex_t
\caption{Two polyhedra of hyperbolic type.}
\label{fg:hyperbolic}
\end{figure}

Figure \ref{fg:hyperbolic} shows on the left an example of 
polyhedron of hyperbolic type for which the induced geometric structure
is not an $HS$-structure, since the upper face (in gray) is a space-like
face in the de Sitter part of $\HS^3$, so that it is not modelled on 
$\HS^2$. 

The induced geometric structure on the boundary of the polyhedron
shown on the right, however, is a positive causal $HS$-structure. 
At the upper and lower vertices, this $HS$-structure has degree $0$.
The three ``middle'' vertices are contained in the hyperbolic part of 
the $HS$-structure, and the positivity of the $HS$-structure at those
vertices follows from the convexity of the polyhedron. 

\subsubsection{Polyhedra of compact type}

In this case too, the induced geometric structure is also 
{\it sometimes} a causal $HS$-structure.

\begin{figure}[ht]
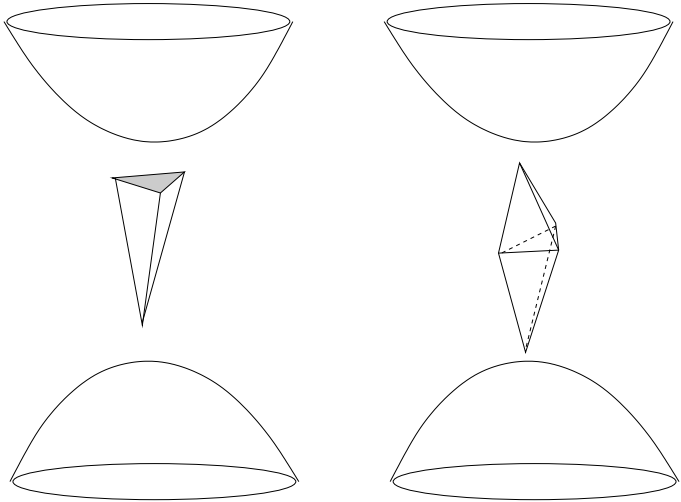

\input compact1.pstex_t 
\hspace{1cm}
\input compact2.pstex_t
\caption{Two polyhedra of compact type.}
\label{fg:compact}
\end{figure}

On the left side of figure \ref{fg:compact} we find an example of a 
polyhedron of compact type on which the induced geometric structure
is not an $HS$-structure -- the upper face, in gray, is a space-like
face in the de Sitter component of $\HS^3$. On the right side, the
geometric structure on the boundary of the polyhedron is a positive causal 
$HS$-structure. All faces are time-like faces, so that they are modelled
on $\HS^2$. The upper and lower vertices have degree $0$, while the
three ``middle'' vertices have degree $2$, and the positivity of the
$HS$-structure at those points follows from the convexity of the 
polyhedron (see \cite{cpt}).

\subsection{From a convex polyhedron to a particle interaction}

When a convex polyhedron has on its boundary an induced positive
causal $HS$-structure, it is possible to consider the interaction
corresponding to this $HS$-structure. 

This interaction can be 
constructed from the $HS$-structure by a warped product metric construction.
It can also be obtained as in Section 2, by noting that each open subset of the 
regular part of the $HS$-structure corresponds to a cone in $AdS_3$, and
that those cones can be glued in a way corresponding to the gluing of the
corresponding domains in the $HS$-structure.

The different types of polyhedra -- in particular the examples in 
Figure \ref{fg:compact} and Figure \ref{fg:hyperbolic} -- correspond to 
differen types of interactions. 

\subsubsection{Polyhedra of bi-hyperbolic type}

For those polyhedra the hyperbolic vertices in $H^3_+$ (resp. 
$H^3_-$) correspond to massive particles leaving from (resp. 
arriving at) the interaction. The de Sitter vertices, at which
the induced $HS$-structure has degree $2$, correspond to tachyons.

\subsubsection{Polyhedra of hyperbolic type}

In the example on the right of Figure \ref{fg:hyperbolic}, the upper and lower
vertices correspond, through the definitions in Section \ref{ssc:singular}, to two
black holes (or two white holes, depending on the time orientation). 
The three middle vertices correspond to massive particles. The interaction
corresponding to this polyhedron therefore involves two black (resp. white)
holes and three massive particles. 

The interactions corresponding to polyhedra of hyperbolic type can be
more complex, in particular because the topology of the intersection of
the boundary of a convex polyhedron with the de Sitter part of $\HS^3$ 
could be a sphere with an arbitrary number of disks removed. Those
interaction can involve black holes and massive particles, but also
tachyons.

\subsubsection{Polyhedra of compact type}

The interaction corresponding to the polyhedron at the right of 
Figure \ref{fg:compact} is even more exotic. The upper vertex corresponds
to a BTZ-type singularity, the lower to a white hole, and the three middle vertices 
correspond to tachyons. The interaction therefore involves a BTZ-type singularity,
a white hole and three tachyons.

\subsection{From a particle interaction to a convex polyhedron}
\label{ssc:converse}

This section describes, in a restricted setting, a converse to the construction
of an interaction from a convex polyhedron in $\HS^3$. We show below that,
under an additional condition which seems to be physically relevant, an 
interaction can always be obtained from a convex polyhedron in $\HS^3$. 
Using the relation described in Section 2 between interactions and positive
causal $HS$-structures, we will related convex polyhedra to those $HS$-structures
rather than directly to interactions.

This converse relation is described here only for simple interactions
involving massive particles and tachyons.

\subsubsection{A positive mass condition}

The additional condition appearing in the converse relation is natural in view
of the following remark. 

\begin{remark}
Let $M$ be a singular AdS manifold, $c$ be a cone singularity along a time-like
curve, with positive mass (angle less than $2\pi$). Let $x\in c$ and let $L_x$ be
the link of $M$ at $x$, and let $\gamma$ be a simple closed space-like geodesic in 
the de Sitter part of $L_x$. Then the length of $\gamma$ is less than $2\pi$.
\end{remark}

\begin{proof}
An explicit description of $L_x$ follows from the construction of the AdS metric
in the neighborhood of a time-like singularity, as seen in Section 2. The 
de Sitter part of this link contains a unique simple closed geodesic, and its
length is equal to the angle at the singularity. So it is less than $2\pi$.
\end{proof}

In the sequel we consider a singular $HS$-structure $\sigma$ on $S^2$, which is the link
of an interaction involving massive particles and tachyons. This means that $\sigma$
is positive and causal, and moreover:
\begin{itemize}
\item it has two hyperbolic components, $D_-$ and $D_+$, on which $\sigma$ restricts 
to a complete hyperbolic metric with cone singularities,
\item any future-oriented inextendible time-like line in the de Sitter region of $\sigma$
connects the closure of $D_-$ to the closure of $D_+$.
\end{itemize}

\begin{defi}
$\sigma$ has {\bf positive mass} if any simple closed space-like geodesic in the de
Sitter part of $(S^2,\sigma)$ has length less than $2\pi$.
\end{defi}

This notion of positivity of mass for an interaction generalizes the natural notion
of positivity for time-like singularities.

\subsubsection{A convex polyhedron from simpler interactions}

\begin{theorem} \label{tm:converse}
Let $\sigma$ be a positive causal $HS$-structure on $S^2$, such that
\begin{itemize}
\item it has two hyperbolic components, $D_-$ and $D_+$, on which $\sigma$ restricts 
to a complete hyperbolic metric with cone singularities,
\item any future-oriented inextendible time-like line in the de Sitter region of $\sigma$
connects the closure of $D_-$ to the closure of $D_+$.
\end{itemize}
Then $\sigma$ is induced on a convex polyhedron in $\HS^3$ if and only if it has
positive mass. If so, this polyhedron is unique, and it is of bi-hyperbolic type.
\end{theorem}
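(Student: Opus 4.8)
The plan is to read this as an Alexandrov--Pogorelov type realization-and-rigidity theorem and to prove it by the continuity (deformation) method, with the positive mass hypothesis entering precisely as the condition that keeps the realizing polyhedron in the interior of the relevant moduli space.

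\emph{Necessity.} First I would show that if $\sigma$ is induced on a convex polyhedron $P\subset\HS^3$ with no vertex on $Q$, then $P$ is of bi-hyperbolic type and $\sigma$ has positive mass. The type is forced: by the classification recalled above, a polyhedron of hyperbolic type meets at most one of $H^3_+,H^3_-$ and one of compact type meets neither, so only a bi-hyperbolic polyhedron can induce a structure with two hyperbolic components $D_\pm$, and completeness of the induced metrics on $D_\pm$ is exactly the condition that no vertex lies on $Q$. For positive mass, let $\gamma$ be a simple closed space-like geodesic in the de Sitter part; it is carried by the de Sitter polyhedral annulus $\partial P\cap dS_3$. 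A Gauss--Bonnet computation on this annulus (in which the de Sitter curvature $+1$, the geodesic curvature of $\gamma$, and the positive contributions of the de Sitter cone points all enter) bounds the length of $\gamma$: the convexity of $P$ makes $\gamma$ bend and the positivity condition at the cone points makes the bound strict, giving length $<2\pi$. This generalizes the model computation in the Remark preceding the statement, and can also be extracted from \cite{cpt}.

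\emph{Existence and uniqueness.} For the converse I would fix the combinatorial data of $\sigma$ (the numbers of singular points in $H^3_+$, in $H^3_-$, and in $dS_3$) and introduce two spaces: the space $\mathcal C$ of positive causal $HS$-structures with that data, with positive mass, satisfying the two bulleted hypotheses; and the space $\mathcal Q$ of convex bi-hyperbolic polyhedra in $\HS^3$, up to isometry, with the matching number of vertices in each region. The induced-structure map $I\colon\mathcal Q\to\mathcal C$ is continuous, and I would establish:
\begin{itemize}
\item $I$ is injective (global rigidity: two convex bi-hyperbolic polyhedra with isometric induced $HS$-structures are congruent), which yields the uniqueness statement;
\item $I$ is a local homeomorphism, via infinitesimal rigidity (the differential of $I$ has trivial kernel) together with equality of the dimensions of $\mathcal Q$ and $\mathcal C$ and invariance of domain;
\item $I$ is proper.
\end{itemize}
Granting these, $I(\mathcal Q)$ is open and closed in $\mathcal C$; since $\mathcal C$ is nonempty (it contains structures coming from explicit polyhedra) and connected (connectedness being reducible, via deformations as in Remark~\ref{rk.cheminsing}, to connectedness of the corresponding hyperbolic moduli), $I$ is onto, hence a homeomorphism. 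This shows every $\sigma\in\mathcal C$ is realized on a unique polyhedron, necessarily of bi-hyperbolic type. The infinitesimal and global rigidity is the technical core, and I would obtain it from the study of induced $HS$-metrics on convex polyhedra in \cite{cpt,shu}, adapting the Cauchy--Alexandrov bending argument to faces that may be space-like or time-like.

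The step I expect to be the main obstacle is \emph{properness}, and this is exactly where positive mass is indispensable. A sequence in $\mathcal Q$ could degenerate either by a vertex escaping to $Q$ (so a hyperbolic cap loses completeness) or by the de Sitter annulus collapsing, the collapse being detected by the length of a simple closed space-like geodesic tending to $2\pi$. The uniform bound length $<2\pi$ along a convergent sequence in $\mathcal C$ rules out precisely these degenerations and forces subconvergence to a genuine bi-hyperbolic polyhedron. This quantitative content is what makes the ``if'' direction work and dovetails with the necessity of positive mass proved above, so that positive mass is both necessary and sufficient.
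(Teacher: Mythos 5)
The paper's own ``proof'' of Theorem~\ref{tm:converse} contains no geometric argument at all: it is the single observation that the statement is a direct translation of \cite[Theorem 1.3]{cpt} (case D.2) into the language of $HS$-structures, the dictionary between links of interactions and induced structures on convex polyhedra having been set up earlier in the section. Your continuity-method program is therefore a genuinely different presentation --- it is, in outline, the Alexandrov--Pogorelov strategy by which results such as \cite[Theorem 1.3]{cpt} are actually established --- but as written it is a plan rather than a proof, and the plan has genuine gaps. Each of its pillars is itself a substantial theorem: injectivity of $I$ (global rigidity), infinitesimal rigidity together with the computation that $\dim\mathcal Q=\dim\mathcal C$, properness of $I$, and non-emptiness and connectedness of $\mathcal C$. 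None of these is proved in your text: rigidity is to be ``obtained from \cite{cpt,shu}'', connectedness is reduced to an unproven deformation claim via Remark~\ref{rk.cheminsing}, and properness rests on the unargued assertion that the only possible degenerations are a vertex escaping to $Q$ or a collapse detected by a geodesic length tending to $2\pi$. Since \cite[Theorem 1.3]{cpt} is precisely the realization-and-uniqueness statement at issue, deferring the technical core to that reference makes the argument circular as an independent proof; and if citing \cite{cpt} is permitted, the theorem follows in one line, which is exactly what the paper does.

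One step of your sketch would moreover fail as stated: the necessity of positive mass via ``a Gauss--Bonnet computation on this annulus''. Even in the Riemannian model case, Gauss--Bonnet applied to the region bounded by a simple closed geodesic controls the enclosed area plus the cone curvatures, not the length of the geodesic; and in Lorentzian signature the Gauss--Bonnet formula itself changes (the annulus here is bounded by circles of photons, where boundary and corner contributions are delicate, and the geodesic curvature term you invoke vanishes since $\gamma$ is a geodesic). The classical route to the bound $L(\gamma)<2\pi$ is polar duality --- the dual curve of $\gamma$ on the dual polyhedron has positive length by convexity, and the two lengths are complementary --- or simply the fact that this bound is part of the necessary conditions in \cite[Theorem 1.3]{cpt}. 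Your minor claim that completeness of the metrics on $D_\pm$ is ``exactly'' the absence of vertices on $Q$ is also off (ideal vertices yield complete cusps; they are excluded here by the hypothesis that the hyperbolic metrics have only cone singularities), though this does not affect the structure of the argument.
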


\begin{proof}
This is a direct translation of \cite[Theorem 1.3]{cpt} (see in particular case
D.2).
\end{proof}

The previous theorem is strongly related to classical statements on the induced
metrics on convex polyhedra in the hyperbolic space, see \cite{alex}.

\subsubsection{More general interactions/polyhedra}

As mentioned above we believe that Theorem \ref{tm:converse} might be extended
to wider situations. This could be based on the statements on the induced
geometric structures on the boundaries of convex polyhedra in $\HS^3$, 
as studied in \cite{shu,cpt}.

\section{Classification of positive causal HS-surfaces}
\label{sec.classificationHS}

In all this {section} $\Sigma$ denotes a closed (compact without boundary) connected positive causal HS-surface. It decomposes
in three regions:

\begin{itemize}
\item \textit{Photons:} a photon is a point corresponding in every
HS-chart to points in $\partial\HH^2_\pm$. Observe that a photon might be singular,
\textit{i.e.} corresponds to a light-like singularity (a massless particle or an extreme BTZ-like singularity).
The set of photons, denoted
$\mathcal{P}(\Sigma)$, or simply $\mathcal{P}$ in the non-ambiguous situations,
is the disjoint union of a finite number of isolated points (extreme BTZ-like singularities
or cuspidal singularities)  and of a compact embedded
one dimensional manifold, \textit{i.e.} a finite union of circles.

\item \textit{Hyperbolic regions:} They are the connected components
of  the open subset $\HH^2(\Sigma)$ of $\Sigma$
corresponding to the time-like regions $\HH^2_\pm$ of $\HS^2$.
They are naturally hyperbolic surfaces with cone singularities. There
are two types of hyperbolic regions: the future and the past ones.
The boundary of every hyperbolic region is a finite union of circles of photons
and of cuspidal (parabolic) singularities.

\item \textit{De Sitter regions:} They are the connected components
of  the open subset $\dS^2(\Sigma)$ of $\Sigma$
corresponding to the time-like regions $\dS^2$ of $\HS^2$.
Alternatively, they are the connected components of $\Sigma \setminus \mathcal{P}$
that are not hyperbolic regions. Every de Sitter region is a singular dS surface,
whose closure is compact and with boundary made of circles of photons and of
a finite number of extreme parabolic singularities.

\end{itemize}

\subsection{Photons}
Let $C$ be a circle of photons. It admits two natural $\R\PP^1$-structures, which may not coincide if
$C$ contains light-like singularities.

Consider a closed annulus $A$ in $\Sigma$ containing $C$ so that all HS-singularities in
$A$ lie in $C$.
Consider first the hyperbolic side, \textit{i.e.} the component $A_{H}$ of $A \setminus C$ comprising time-like elements.
Reducing $A$ if necessary we can assume that $A_{H}$ is contained in one hyperbolic region. Then every path
starting from a point in $C$ has infinite length in $A_{H}$, and inversely every complete geodesic ray in $A_{H}$
accumulates on an unique point in $C$. In other words, $C$ is the conformal boundary at $\infty$ of $A_{H}$. Since
the conformal boundary of $\HH^2$ is naturally $\R\PP^1$ and that hyperbolic isometries are restrictions of real projective transformations,
$C$ inherits, as conformal boundary of $A_{H}$, a $\R\PP^1$-structure that we call \textit{$\R\PP^1$-structure on $C$ from the hyperbolic side.}

Consider now the component $A_{S}$ in the de Sitter region adjacent to $C$. It is
is foliated by the light-like lines. Actually, there are two such foliations (for more details, see
\ref{sub.dSclass} below).
An adequate selection of this annulus ensures that the leaf space of each of these foliations
is homeomorphic to the circle - actually, there is a natural identification
between this leaf space and $C$: the map associating to a leaf its extremity.
These foliations are transversely projective: hence they induce a $\R\PP^1$-structure
on $C$.

This structure is the same for both foliations, we call it \textit{$\R\PP^1$-structure on $C$ from the de Sitter side.}
In order to sustain this
claim, we refer \cite[{\S} 6]{mess}: first observe that $C$ can be slightly pushed inside $A_{S}$
onto a space-like simple closed curve (take a loop around $C$ following alternatively past oriented light-like
segments in leaves of one of the foliations, and future oriented segments in the other foliation; and smooth it).
Then apply \cite[Proposition 17]{mess}.

If $C$ contains no light-like singularity, the $\R\PP^1$-structures from the hyperbolic and de Sitter sides coincide.
But it is not necessarily true if $C$ contains light-like singularities. Actually, the holonomy from one side is obtained
by composing the holonomy from the other side by parabolic elements, one for each light-like singularity in $C$.
Observe that in general even the degrees may not coincide.

\subsection{Hyperbolic regions}
Every component of the hyperbolic region has a compact closure in $\Sigma$. It follows easily that
every hyperbolic region is a complete hyperbolic surface with cone singularities (corresponding to massive particles)
and cusps (corresponding to extreme BTZ-like singularities) and that is of finite type, \textit{i.e.} homeomorphic
to a compact surface without boundary with a finite set of points removed.

\begin{prop}
\label{pro.hypdegree0}
Let $C$ be a circle of photons in $\Sigma$, and $H$ the hyperbolic region adjacent to $C$.
Let $\bar{H}$ be the open domain in $\Sigma$ comprising $H$ and all cuspidal singularities
contained in the closure of $H$. Assume that $\bar{H}$ is not homeomorphic to the disk.
Then, as a $\R\PP^1$-circle defined by the hyperbolic side, the circle $C$ is hyperbolic of degree 0.
\end{prop}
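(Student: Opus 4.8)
The plan is to read off the type and degree of the $\R\PP^1$-circle $C$ (from the hyperbolic side) directly from the holonomy $h:=\rho(\gamma_0)\in\op{PSL}(2,\R)$ of the loop $\gamma_0$ around $C$, using the geometry of the end of the hyperbolic region. Recall from the discussion of hyperbolic regions that $\bar H$ is a \emph{complete} hyperbolic surface of finite type whose cone points (massive particles) all have angle $<2\pi$ by positivity; in particular $\bar H$ carries a CAT$(-1)$ metric, so it admits a convex core and the standard decomposition of its ends into funnels and cusps. The circle $C$ is, by the very definition of the hyperbolic-side structure, the conformal boundary at infinity of a collar annulus $A_H\subset H$, and its $\R\PP^1$-structure is the boundary value of the developing map $D\colon\tilde A_H\to\HH^2$, with holonomy $h$ acting on $\partial\HH^2=\R\PP^1$. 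Thus everything reduces to determining the conjugacy type of $h$ and the way the developed image sits in $\uRP$.

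First I would establish the dichotomy: \emph{$h$ is hyperbolic unless $\bar H$ is a disk.} I treat the three non-hyperbolic possibilities. If $h$ is trivial, then $\gamma_0$ bounds a region with flat holonomy, which in a complete hyperbolic surface must be an embedded disk containing no cone point and no cusp, so $\bar H=\HH^2$ is a disk. If $h$ is elliptic, its fixed point $o\in\HH^2$ lies at finite distance; developing inward from $C$, the collar limits onto $o$, and completeness forces the inner side of $\gamma_0$ to be exactly a cone neighborhood of a single cone point, so $\bar H$ is a disk with one cone point, again homeomorphic to a disk. Finally, a parabolic $h$ would make $C$ a parabolic circle at infinity; but a parabolic element bounding a circle of photons occurs only for the elementary cyclic group $\langle h\rangle$, i.e. $\bar H\cong\HH^2/\langle h\rangle$ with its single cusp end filled by a cuspidal singularity, which is topologically a disk. (In a non-elementary, hence non-disk, surface every parabolic is instead a cusp, so a \emph{cuspidal singularity} --- an isolated point of $\mathcal{P}$ --- rather than a circle.) Since by hypothesis $\bar H$ is not homeomorphic to a disk, all three cases are excluded and $h$ must be hyperbolic.

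It then remains to compute the degree. As $h$ is hyperbolic, $C$ is a genuine funnel end, so the component of the complement of the convex core of $\bar H$ containing $C$ is a standard hyperbolic funnel, bounded by the simple closed core geodesic whose length equals the translation length of $h$. On such a funnel the developing map $D$ is an embedding onto the component of $\HH^2\setminus\mathrm{axis}(h)$ lying on the far side of the core geodesic; consequently the boundary developing map $d\colon\tilde C\to\uRP$ is a homeomorphism onto the open interval between two \emph{consecutive} lifts $x_0<x_1$ of the fixed points of $h$ in $\uRP$. In the normal form $\rho(\gamma_0)=\delta^k g$ of Section~\ref{sub.RP1circle}, with $g$ fixing points of $\uRP$, this means we may take $g=h$ and $k=0$; hence $C$ is a hyperbolic $\R\PP^1$-circle of degree $0$, as claimed.

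The main obstacle is the dichotomy of the second paragraph, and precisely the point that a \emph{non-disk} surface cannot present elliptic or parabolic holonomy along a circle of photons. The content here is geometric rather than homotopical: a purely topological peripheral curve could a priori carry any holonomy, but the requirements that $\bar H$ be \emph{complete} and that $C$ be a genuine conformal boundary \emph{circle} at infinity (neither an isolated cuspidal point nor an interior cone point) force the inward side of $\gamma_0$ to cap off whenever $h$ is non-hyperbolic, leaving no room for genus or for a second end. Making this ``capping off'' rigorous --- via the convex core, or by a direct analysis of the developing map near $o$ in the elliptic case and near the parabolic fixed point in the parabolic case --- is the crux of the proof.
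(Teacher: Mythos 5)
Your overall strategy --- read the type of $C$ off the holonomy $h$ of the peripheral loop, show $h$ must be hyperbolic unless $\bar H$ is a disk, then compute the degree from the funnel picture --- is coherent, and your last step is correct: once one knows that $C$ bounds a non-singular funnel, the developing map of $\tilde C$ is a homeomorphism onto an interval between two consecutive lifts of the fixed points of $h$, which is exactly ``hyperbolic of degree $0$''. But there is a genuine gap, and it sits exactly where you yourself locate ``the crux'': the dichotomy and the funnel structure are asserted, not proved. You invoke the ``standard decomposition of the ends into funnels and cusps'' and the convex core of $\bar H$ as if these could be quoted for hyperbolic cone surfaces. They cannot be imported from the classical theory of complete hyperbolic surfaces: those arguments (every parabolic element corresponds to a cusp, the complement of the convex core is a union of funnels, etc.) rest on the holonomy group being discrete, torsion-free and acting properly on $\HH^2$, whereas the holonomy of a hyperbolic cone surface is in general neither discrete nor faithful. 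Likewise your elliptic and trivial cases (``developing inward, the collar limits onto $o$'', ``bounds a region with flat holonomy, hence an embedded disk'') are claims about the global behavior of a developing map that is not a priori injective or proper; they are plausible but unproven, and your parenthetical ``in a non-elementary surface every parabolic is instead a cusp'' is precisely the kind of statement that fails without discreteness.

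This missing step is not a technicality: it is the entire content of the proposition, and it is what the paper's proof supplies by hand. There, since $\bar H$ is not a disk, $C$ is essential in $H$; a length-minimizing loop in its free homotopy class exists (Ascoli plus a properness argument, using that $C$ is not homotopic to a small loop around a cusp) and is a simple closed geodesic missing the cone points (positivity, i.e.\ angles $<2\pi$, lets any path through a cone point be shortened); then an inductive argument (the $A(r)$, $A'(r)$, $r_1>R$ analysis) produces a closed geodesic bounding with $C$ an annulus containing \emph{no} singularity. Only at that point does one have your funnel, whence hyperbolicity of $h$ and degree $0$. So your outline becomes a proof only if you supply an argument of comparable substance --- for instance a proof of the convex core/funnel--cusp decomposition for complete hyperbolic cone surfaces with angles $<2\pi$, established via the developing map or CAT$(-1)$ geometry rather than via discreteness --- instead of citing it.
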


\begin{proof}
The proposition will be proved if we find
an annulus in $H$ bounded by $C$ and a simple closed geodesic in $H$
containing no singularity. Indeed, the holonomy along $C$ coincide then with the holonomy of
the closed geodesic. It is well-known that closed geodesics in hyperbolic surfaces are hyperbolic. Further details
are left to the reader.

Since we assume that $\bar{H}$ is not a disk, $C$ represents a non-trivial free homotopy class in $H$.
Consider absolutely continuous simple loops in $H$ freely homotopic to $C$ in $H \cup C$.
Let $L$ be the length of one of them. There are two compact subsets $K \subset K' \subset \bar{H}$ such that
every loop of length $\leq 2L$ containing a point in the complement of $K'$ stays outside $K$ and
is homotopically trivial. It follows that every loop freely homotopic to $C$
of length $\leq L$ lies in $K'$:
by Ascoli and semi-continuity of the length, one of them has minimal length $l_0$
(we also use the fact that $C$ is not freely homotopic to a small closed loop around a cusp of $H$,
details are left to the reader).
It is obviously simple, and it contains no singular point, since every path containing a singularity
can be shortened. Hence it is a closed hyperbolic geodesic.

There could be several such closed simple geodesics of minimal length, but they are two-by-two disjoint,
and the annulus bounded by two such minimal closed geodesic must contain at least one singularity since there
is no closed hyperbolic annulus bounded by geodesics. Hence, there is only a finite number of such
minimal geodesics, and for one of them, $c_0$, the annulus $A_0$ bounded by $C$ and $c_0$
contains no other minimal closed geodesic.

If $A_0$ contains no singularity,
the proposition is proved. If not, for every $r > 0$, let $A(r)$ be the
set of points in $A_0$ at distance $< r$ from $c_0$, and let $A'(r)$ be the complement of
$A(r)$ in $A_0$. For small value of $r$, $A(r)$ contains no singularity. Thus, it is isometric
to the similar annulus in the unique hyperbolic annulus containing a geodesic loop of length
$l_0$. This remarks holds as long as $A(r)$ is regular. Denote by $l(r)$ the length
of the boundary $c(r)$ of $A(r)$.

Let $R$ be the supremum of positive real numbers $r_0$ such that for every $r < r_0$
every essential loop in $A'(r)$ has length $\geq l(r)$. Since $A_0$ contains no closed
geodesic of length $\leq l_0$, this supremum is positive. On the other hand,
let $r_1$ be the distance between $c_0$ and the singularity $x_1$ in $A_0$ nearest to $c_0$.

We claim that $r_1 > R$. Indeed: near $x_1$ the surface is isometric
to a hyperbolic disk $D$ centered at $x_1$ with a wedge between two geodesic rays $l_1$, $l_2$ issued from $x_1$
of angle $2\theta$ removed. Let $\Delta$ be the geodesic ray issued from $x_1$ made of points at equal distance
from $l_1$ and from $l_2$.
Assume by contradiction $r_1 \leq R$. Then, $c(r_1)$ is a simple loop, containing $x_1$ and
minimizing the length of loops inside the closure of $A'(r_1)$.
Singularities of cone angle $2\pi-2\theta < \pi$ cannot be approached by length minimizing closed loops,
hence $\theta \leq \pi/2$. Moreover, we can assume without loss of generality that $c(r)$
near $x_1$ is the projection of a $C^1$-curve $\hat{c}$ in $D$ orthogonal to $\Delta$
at $x_1$, and such that the removed wedge between $l_1$, $l_2$, and the part of $D$ projecting into $A(r)$ are on opposite
sides  of this curve. For every $\epsilon > 0$, let $y_1^\epsilon$, $y^\epsilon_2$ be the points at distance
$\epsilon$ from $x_1$ in respectively $l_1$, $l_2$. Consider the geodesic $\Delta^\epsilon_i$
at equal distance from $y_i^\epsilon$ and $x_1$ ($i=1,2$): it is orthogonal to $l_i$, hence not
tangent to $\hat{c}$. It follows that, for $\epsilon$ small enough, $\hat{c}$ contains a point
$p_i$ closer to $y_i^\epsilon$ than to $x_1$. Hence, $c(r_1)$ can be shortened be replacing
the part between $p_1$ and $p_2$ by the union of the projections of the geodesics $[p_i, y_i^\epsilon]$.
This shorter curve is contained in $A'(r_1)$: contradiction.

Hence $R < r_1$. In particular, $R$ is finite. For $\epsilon$ small enough, the annulus $A'(R+\epsilon)$ contains
an essential loop $c_\epsilon$ of minimal length $< l(R+\epsilon)$. Since it lies in $A'(R)$, this loop has
length $\geq l(R)$. On the other hand, there is $\alpha>0$ such that any essential loop in $A'(R+\epsilon)$
contained in the $\alpha$-neighborhood of $c(R+\epsilon)$ has length $\geq l(R+\epsilon) > l(R)$. It follows
that $c_\epsilon$ is disjoint from $c(R+\epsilon)$, and thus, is actually a geodesic loop.

The annulus $A_\epsilon$ bounded by $c_\epsilon$ and $c(R+\epsilon)$ cannot be regular: indeed, if it was,
its union with $A(R+\epsilon)$ would be a regular hyperbolic annulus bounded by two closed geodesics.
Therefore, $A_\epsilon$ contains a singularity. Let $A_1$ be the annulus bounded by $C$ and $c_\epsilon$:
every essential loop inside $A_1$ has length $\geq l(R)$
(since it lies in $A'(R)$). It contains strictly less singularities than $A_0$. If we restart the process
from this annulus, we obtain by induction an annulus bounded by $C$ and a closed geodesic inside $T$
with no singularity.
\end{proof}

\subsection{De Sitter regions}
\label{sub.dSclass}

Let $T$ be a de Sitter region of $\Sigma$. We recall that $\Sigma$ is assumed to be positive, \textit{i.e.} that
all non-time-like singularities of non-vanishing degree have degree $2$ and are positive. This last feature will
be essential in our study (cf. Remark~\ref{rk:pospos}).

Future oriented isotropic directions defines two oriented line fields
on the regular part of $T$, defining two oriented foliations. Since we assume that $\Sigma$ is causal, space-like singularities
have degree $2$, and these foliations extend continuously on sigularities (but not differentially) as regular oriented foliations.
Besides, in the neighborhood of every BTZ-like singularity $x$, the leaves of each of these foliations spiral around
$x$. They thus define two singular oriented foliations $\cF_{1}$, $\cF_{2}$, where the singularities are precisely
the BTZ-like singularitie, \textit{i.e.} hyperbolic time-like ones, and have degree $+1$. By Poincar\'e-Hopf index formula
we immediatly get:

\begin{cor}
Every de Sitter region is homeomorphic to the  annulus, the disk or the sphere.
Moreover, it contains at most two BTZ-like singularities. If it contains two such singularities,
it is homeomorphic to the 2-sphere, and if it contains exactly one BTZ-like singularity, it is homeomorphic to the disk.
\end{cor}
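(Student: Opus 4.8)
The plan is to read off the topology of $\overline T$ from the Poincar\'e--Hopf index theorem applied to one of the two oriented light-like foliations, say $\cF_1$, and then to remove the remaining ambiguity using orientability together with causality.

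First I would record the inputs already assembled above. On the regular part of $T$ the foliation $\cF_1$ is a nonsingular \emph{oriented} foliation, so it is a genuine line-of-direction (a vector field up to positive rescaling); its only singularities are the BTZ-like points, each of index $+1$; and its leaves are transverse to the circles of photons bounding $\overline T$, each leaf running from a past photon boundary towards a future one, exactly as in the local model of $\dS^2$. Treating the finitely many extreme parabolic (extreme BTZ-like) boundary points as ideal vertices sitting on the adjacent photon circles, $\overline T$ is a compact oriented surface whose boundary is a disjoint union of circles. Since each boundary circle has vanishing Euler characteristic, the correction terms in the Poincar\'e--Hopf theorem for a vector field transverse to the boundary drop out, and we obtain
\[ \#\{\text{BTZ-like singularities of } T\} \;=\; \sum_{\text{sing}}\op{ind} \;=\; \chi(\overline T). \]

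Next I would run the case analysis on $n:=\chi(\overline T)=\#\{\text{BTZ}\}\ge 0$. Since every compact surface satisfies $\chi\le 2$, with equality only in the closed case, we get $n\in\{0,1,2\}$, which already gives the bound of at most two BTZ-like singularities. Using that $\overline T$ is orientable (it is a subsurface of the oriented surface $\Sigma$), the classification of compact orientable surfaces forces: for $n=2$, $\overline T$ is closed with $\chi=2$, hence the sphere; for $n=1$, $\overline T$ has $\chi=1$, hence the disk (the non-orientable $\R\PP^2$ is excluded by orientability); for $n=0$, $\overline T$ is either the annulus or the torus (the Klein bottle and the M\"obius band being excluded). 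Passing from $\overline T$ to the open region $T$ recovers the three homeomorphism types claimed: the sphere carrying its two BTZ points, the open disk, and the open annulus.

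Finally, the one genuinely non-topological step, and the main obstacle, is to discard the torus in the case $n=0$. If $\overline T$ were a torus it would be closed, so $T=\overline T$ would be a full connected component of $\Sigma$ carrying no singular point at all; thus $T$ would be a closed \emph{regular} de Sitter surface, that is, a compact Lorentzian manifold. But a compact spacetime always contains closed time-like curves: cover it by the chronological futures $I^+(x)$, extract a finite subcover, and chase the resulting cyclic chain to produce a closed time-like loop. This contradicts the hypothesis that $\Sigma$, being causal, contains no closed causal curve. Hence the torus cannot occur and $\overline T$ is the annulus, completing the argument. The only point requiring care in the index step is the behaviour of $\cF_1$ at the extreme parabolic boundary points; this is settled by the explicit parabolic local model, which shows that near such a point the foliation remains transverse to the adjacent photon circle and creates no interior zero, so that it contributes nothing to the count.
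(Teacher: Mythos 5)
Your overall route is the same as the paper's: the corollary is stated there as an immediate consequence of the Poincar\'e--Hopf formula applied to the two oriented isotropic foliations $\cF_1,\cF_2$, whose singular points are precisely the BTZ-like singularities, each of index $+1$. Your write-up is in fact more complete than the paper's one-line argument on one point: Poincar\'e--Hopf alone does not exclude a closed de Sitter component of genus one (a torus carries nonsingular oriented foliations), and your exclusion of the torus via the classical fact that a compact time-oriented Lorentzian manifold contains a closed time-like curve is exactly the step needed to invoke the causality hypothesis; the paper leaves this implicit.

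Two local claims in your index computation are inaccurate, although both can be repaired without changing the conclusion. First, the leaves of $\cF_1$ are \emph{not} transverse to the circles of photons: in the Klein model the light-like geodesics of $\dS^2$ are the projective lines tangent to $\partial\HH^2_\pm$, so the line field extends to each photon circle \emph{tangentially} and that circle becomes a leaf of the extended foliation. The identity $\sum \op{ind} = \chi(\overline{T})$ still holds (for instance, double $\overline{T}$ along its invariant boundary: $2\sum\op{ind}=\chi(D\overline{T})=2\chi(\overline{T})$ since boundary circles have zero Euler characteristic), but not for the reason you give. Second, extreme BTZ-like singularities are not ``ideal vertices sitting on the adjacent photon circles'': they are isolated points of the set of photons which close off an entire end of the de Sitter region -- see the case ``disk with an extreme BTZ-like singularity removed'' in Theorem~\ref{tm:thierry}. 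When such a point is filled in, the leaves spiral into it and it is an index $+1$ singularity of the extended foliation, not an index-$0$ point; with your convention the count fails on that very example (the filled closure is a disk, so $\chi=1$, yet the region contains no non-extreme BTZ-like singularity). The correct bookkeeping is $\#\{\mathrm{BTZ}\}+\#\{\mathrm{extreme\ BTZ}\}=\chi(\overline{T})\le 2$, and the corollary still follows from your case analysis, because deleting each extreme BTZ point from the filled closure turns a sphere into a disk and a disk into an annulus, which reproduces exactly the claimed correspondence between the number of (non-extreme) BTZ-like singularities and the homeomorphism type of the open region.
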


Let $c: \R \to L$ be a parametrization of a leaf $L$ of $\cF_i$, increasing with respect to the time orientation.
Recall that the $\alpha$-limit set (respectively $\omega$-limit set) is the set of points in $T$ which are limits
of a sequence $(c(t_n))_{(n \in \N)}$, where $(t_n)_{(n \in \N)}$ is a decreasing (respectively an increasing) sequence of real numbers.
By assumption, $T$ contains no CCC. Hence, according to Poincar\'e-Bendixson Theorem:

\begin{cor}
\label{cor.Lclosed}
For every leaf $L$ of $\cF_{1}$ or  $\cF_{2}$, oriented by its time orientation, the $\alpha$-limit set
(resp. $\omega$-limit set) of $L$ is either empty or a past (resp. future) BTZ-like singularity.
Moreover, if the $\alpha$-limit set (resp. $\omega$-limit set) is empty, the leaf accumulates in the past (resp. future)
direction to a past (resp. future) boundary component of $T$ that is a point in a circle of photons,
or a extreme BTZ-like singularity.
\end{cor}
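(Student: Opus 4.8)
The plan is to run the Poincar\'e--Bendixson machinery on the flow defined by $\cF_i$, using as the essential non-degeneracy input the absence of closed causal curves. First I would reduce to the $\omega$-limit statement alone: reversing the time-orientation of the leaf exchanges $\alpha$- and $\omega$-limit sets, exchanges past and future BTZ-like singularities (spiral sources become spiral sinks), and exchanges the past and future boundary components, so the $\alpha$-statement follows formally from the $\omega$-statement. For the $\omega$-statement the two facts I would record at the outset are: (i) the flow has no periodic orbit, since a closed leaf of $\cF_i$ would be a closed light-like, hence causal, curve, contradicting the causality of $\Sigma$; and (ii) by the discussion preceding the corollary the only interior singularities are the BTZ-like ones, which are isolated foci of index $+1$, at most two in number, a past one behaving as a spiral source and a future one as a spiral sink (by the previous corollary two such singularities occur only on the sphere, one of each type).

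Next I would fix a leaf $L$, parametrized by $c:\R\to L$ increasingly for the time-orientation, and analyse the $\omega$-limit set $\omega(L)$ computed in the compact closure $\overline{T}$; it is non-empty, compact and connected. Suppose first that $\omega(L)$ meets the open region $T$. If it contained a regular point $y$, then the whole orbit of $y$, together with its own $\alpha$- and $\omega$-limit sets, would lie in $\omega(L)$; by (i) these limits are fixed points, so $y$ would lie on an orbit running from a source to a sink, forcing the past BTZ-like singularity into $\omega(L)$. But a spiral source is repelling, so a forward orbit enters any small neighbourhood of it only finitely often and cannot accumulate there; hence $\omega(L)$ contains no source and no such connecting orbit. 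Thus $\omega(L)\cap T$ consists only of fixed points, and by connectedness it is a single one; being a forward limit it is the sink, \emph{i.e.} the future BTZ-like singularity $x$, and in fact $\omega(L)=\{x\}$.

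It remains to treat the case $\omega(L)\cap T=\emptyset$, \emph{i.e.} $\omega(L)\subset\partial\overline{T}$, which is exactly the case in which the $\omega$-limit set in $T$ is empty. Here I would invoke the local description of $\cF_i$ near the boundary: along a circle of photons $C$ the foliation is transverse, and in a collar the leaves are precisely the light-like leaves, each converging to its unique extremity on $C$ (the extremity map identifying the leaf space with $C$, as recalled before the corollary). Hence once $c(t)$ enters such a collar it stays in it and converges to a single point of $C$; together with the connectedness of $\omega(L)$ this shows that if $\omega(L)$ meets the interior of a photon circle it reduces to one point of that circle. The only remaining boundary points are the finitely many extreme BTZ-like (parabolic) singularities, which are isolated; a connected boundary limit set accumulating there reduces to one such point by the local parabolic model. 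This yields the second alternative.

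The main obstacle is the honest application of Poincar\'e--Bendixson on the non-compact region $T$: the foliation is transverse to the photon circles, so $\overline{T}$ is not flow-invariant and one cannot quote the theorem verbatim. The care needed is in organising the dichotomy ``$\omega(L)$ meets $T$'' versus ``$\omega(L)\subset\partial\overline{T}$'' so that the two regimes do not mix, in excluding heteroclinic cycles from the purely focal singularity structure (which is what replaces the usual ``no saddle connections'' hypothesis), and in extracting from the transverse collar structure that accumulation on a photon circle occurs at a single extremity rather than along an arc. The genuinely positive input making all of this work is the causality of $\Sigma$, through the non-existence of closed causal curves and hence of periodic leaves.
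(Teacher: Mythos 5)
Your strategy coincides with the paper's: the paper gives no written proof beyond observing that $T$ contains no CCC and invoking the Poincar\'e--Bendixson theorem, and your proposal fleshes out exactly that argument (no closed leaves since a closed leaf would be a CCC, the BTZ-like singularities as the only interior singularities and of focus type, collar analysis along the circles of photons). The time-reversal reduction, the boundary case, and the use of the repelling character of past BTZ-like singularities are all in the spirit of what the paper leaves implicit.

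There is, however, a concrete gap in your Case A, where $\omega(L)$ meets $T$. You claim that if $\omega(L)$ contains a regular point $y$, then the $\alpha$- and $\omega$-limit sets of the orbit of $y$ are fixed points ``by (i)''. The absence of periodic orbits does not give this: $T$ is not compact and its closure is not flow-invariant, so an orbit contained in $\omega(L)$ may have empty limit set \emph{in $T$} and accumulate instead on $\partial\overline{T}$. Nothing in your argument excludes, for instance, that $\omega(L)$ contains a regular leaf which in backward time converges to a point $q$ of a \emph{past} circle of photons and in forward time flows into the sink; then $\omega(L)$ contains no source, your contradiction via repelling sources never gets started, and you can no longer conclude that $\omega(L)=\{x\}$. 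What rules this configuration out is not Poincar\'e--Bendixson but the boundary orientation argument you deploy only in Case B: near a past circle of photons, future-oriented causal curves cross the space-like loop bounding the collar in one direction only and can never re-enter, hence the forward limit set $\omega(L)$ can meet no past boundary component (and, symmetrically, backward limit sets can meet no future component). Since $q\in\alpha(y)\subset\omega(L)$ would be exactly such a point, the configuration is impossible. So the collar/orientation lemma must be proved first and injected into the interior analysis; ``no periodic orbits plus repelling sources'' alone does not suffice on a non-compact, non-invariant region. With that lemma in place your Case A closes ($\alpha(y)$ is non-empty, lies in $\omega(L)$, and consists only of sources and past-boundary points, neither of which $\omega(L)$ can contain), and the rest of your argument goes through as written.
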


\begin{prop}
\label{pro.hyphyp}
Let $\Sigma$ be a positive, causal singular HS-surface.
Let $T$ be a de Sitter component of $\Sigma$ adjacent to a hyperbolic region $H$ along a circle of photons $C$. If
the completion $\bar{H}$ of $H$ is not homeomorphic to the disk, then
either $T$ is a disk containing exactly one BTZ-like singularity, or the boundary of $T$ in $\Sigma$ is the disjoint union
of $C$ and one extreme BTZ-like singularity.
\end{prop}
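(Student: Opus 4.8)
The plan is to read off the holonomy of $C$ from Proposition~\ref{pro.hypdegree0}, feed it into the two light-like foliations $\cF_1,\cF_2$ of the de Sitter region, and let positivity decide how $T$ closes up on the side opposite to $C$. Since $\bar H$ is not a disk, Proposition~\ref{pro.hypdegree0} gives that, from the hyperbolic side, $C$ is a hyperbolic $\R\PP^1$-circle of degree $0$: its holonomy is a single hyperbolic isometry $g$, and the developing map identifies $C$ with the quotient of an arc of $\partial\HH^2_\pm$ whose endpoints are the repelling and attracting fixed points $r,a$ of $g$. The essential point is that $g$ is hyperbolic, hence not elliptic, so the associated de Sitter dynamics has a genuine (saddle-type) fixed point in the closure of $\dS^2$; this fixed point is what will cap $T$. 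I would first record, via the corollary classifying the topology of de Sitter regions, that $T$ is a disk, an annulus or a sphere and carries at most two BTZ-like singularities; since $T$ is adjacent to $C$ it has non-empty boundary, so the sphere (two-BTZ) case is excluded and $T$ carries $0$ or $1$ BTZ-like singularity.

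Next I would pass to the de Sitter side. The holonomy of $C$ computed there equals $g$ composed with one parabolic for each light-like singularity lying on $C$, and positivity and causality of $\Sigma$ force this composite $h$ to be hyperbolic or parabolic, never elliptic. Developing $T$ near $C$ as the collar over the arc $]r,a[$ swept by the null geodesics of $\cF_1,\cF_2$, the two boundary leaves issued from $r$ and from $a$ are $h$-invariant; by the hyperbolic (saddle) dynamics and Corollary~\ref{cor.Lclosed}, every leaf has empty or BTZ-like limit sets, and positivity is exactly what forces the two invariant boundary leaves to converge to a common fixed point of $h$, so that the collar closes up there rather than opening onto a second circle of photons. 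If $h$ is hyperbolic this common fixed point lies in $\dS^2$ and is a BTZ-like singularity, so $T$ contains exactly one BTZ-like singularity and is a disk by the topological corollary: this is the first alternative. If $h$ is parabolic the fixed point degenerates to a point of $\partial\HH^2_\pm$, an extreme BTZ-like singularity, and $T$ is then an annulus carrying no BTZ-like singularity whose boundary in $\Sigma$ is $C$ together with that single extreme BTZ-like point: this is the second alternative.

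The crux --- and the step I expect to be the main obstacle --- is the de Sitter dynamical analysis of the previous paragraph: showing that the two $h$-invariant boundary leaves issued from $r$ and $a$ really converge to one and the same fixed point, so that $T$ is genuinely capped by a single (possibly extreme) BTZ-like singularity and cannot stretch across to a second photon circle bounding another hyperbolic region. This is precisely where the hypothesis that $\bar H$ is not a disk is used: it is what makes $C$ develop onto a proper arc (degree $0$, hyperbolic holonomy) rather than onto the full circle (elliptic holonomy, which would permit the standard through-annulus between two hyperbolic caps), and it is where positivity --- via Remark~\ref{rk:pospos} --- rules out the ``opening'' configuration. The remaining bookkeeping, namely tracking how the parabolics attached to light-like singularities on $C$ toggle $h$ between hyperbolic and parabolic, and hence $T$ between the BTZ-like and extreme BTZ-like alternatives, is then routine.
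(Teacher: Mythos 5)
Your reduction to the annulus case is exactly the paper's first step (disk case done, sphere excluded, time reversed so that $H$ is a past component, and the only configuration to exclude is the one where the second boundary component $C'$ of $T$ is another circle of photons). But from that point on your proof has a genuine gap: the statement you yourself flag as the crux --- that positivity forces the collar over $C$ to ``close up'' at a single fixed point of $h$ rather than stretch across to a second photon circle --- is asserted, never proved. The appeal to Remark~\ref{rk:pospos} is backwards: that remark exhibits a \emph{non-positive} surgery producing a causal annulus between two hyperbolic regions, i.e.\ it shows the hypothesis is necessary; it supplies no mechanism for the positive case. Your saddle-dynamics picture also does not engage the real difficulty: $T$ contains interior tachyons, and $C$ may carry light-like singularities, each of which deflects the isotropic foliations $\cF_1,\cF_2$, so the leaf dynamics cannot be read off a single holonomy element $h$ of $C$. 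Moreover the ``leaves issued from $r$ and $a$'' are not objects of $T$: the fixed points of the degree-$0$ hyperbolic holonomy are the \emph{endpoints} of the developing interval and do not correspond to points of $C$; making sense of them requires control of the developing map of a \emph{singular} $\dS^2$-structure, which is nontrivial. Finally, your claim that causality and positivity prevent the de Sitter-side holonomy from being elliptic is also unproven (and unnecessary in the paper's argument).

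For comparison, the paper fills precisely this hole with a three-part argument. First, a ``regularization'' surgery cuts along the future half-leaves of $\cF_1$ emanating from every tachyon in $T$ and every light-like singularity on $C$ and reglues to erase them, producing a \emph{regular} de Sitter annulus $T'$ still glued to $H\cup C$; now the two $\R\PP^1$-structures on $C$ agree and Proposition~\ref{pro.hypdegree0} applies. Second, for the regular $T'$ the developing map is shown to be a homeomorphism onto the domain $W\subset\R\PP^1\times\R\PP^1\setminus\kD$ of points with first coordinate in $I$, so $T'\cong W/\langle\gamma_0\rangle$ explicitly, and $\cF'_2$ has exactly two compact leaves, detected as the two fixed points $x_1<x_2$ of the first return map $f'$ on a leaf $L'_1$ of $\cF'_1$. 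Third --- and this is the quantitative content your ``positivity forces convergence'' replaces --- since the surgery is invertible and all the singularities are positive, the first return map $f$ of the original $T$ satisfies $f(x)\le f'(x)$ pointwise; hence $f(x_2)\le x_2$, the future leaf of $\cF_2$ through $x_2$ is trapped, and being closed and non-compact it must accumulate on $C$, which is impossible since future-oriented leaves near $C$ exit $C$ across a space-like loop and cannot return. Without this monotone comparison (or some substitute for it), your argument does not rule out the annulus-between-two-photon-circles configuration, which is the entire content of the proposition.
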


\begin{proof}
If $T$ is a disk, we are done. Hence we can assume
that $T$ is homeomorphic to the annulus. Inverting the time if necessary we also can assume
that $H$ is a past hyperbolic component. Let $C'$ be the other connected boundary component of $T$, \textit{i.e.} its
future boundary. If $C'$ is an extreme BTZ-like singularity, the proposition is proved. Hence we are
reduced to the case where $C'$ is a circle of photons.

According to Corollary~\ref{cor.Lclosed} every leaf of $\cF_{1}$ or $\cF_{2}$ is a closed
line joining the two boundary components of $T$.
For every singularity $x$ in $T$, or every light-like singularity in $C$, let $L_{x}$ be the future oriented half-leaf of $\cF_{1}$
emerging from $x$. Assume that $L_{x}$ does not contain any other singularity. Cut along
$L_{x}$: we obtain a $\dS^2$-surface $T^\ast$ admitting in its boundary two copies of $L_{x}$. Since $L_{x}$ accumulates to
a point in
$C'$ it develops in $\dS^2$ into a geodesic ray touching $\partial\HH^2$. In
particular, we can glue the two copies of $L_{x}$ in the boundary of $T^\ast$ by an isometry fixing their common
point $x$. For the appropriate choice of this glueing map, we obtain
a new $\dS^2$-spacetime where $x$ has been replaced by a regular point: we call this process,
well defined, \textit{regularization at $x$} (see figure~\ref{fig.regul}).

\begin{figure}[ht]
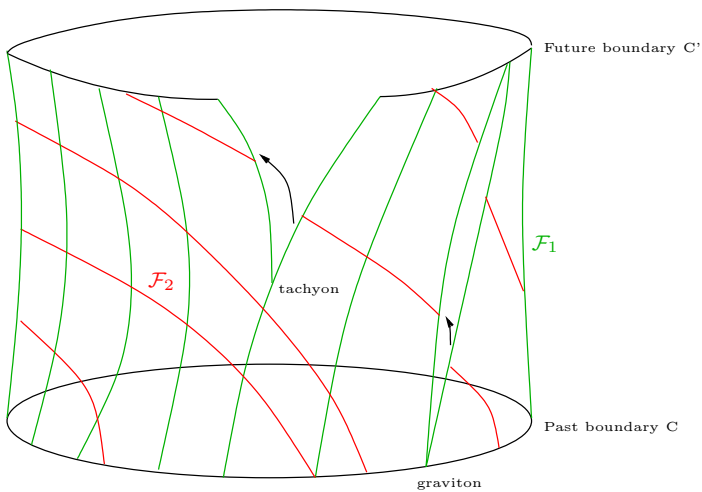

\begin{center}
\input regul.pstex_t
\end{center}
\caption{Regularization of a tachyon and a light-like singularity.}
\label{fig.regul}
\end{figure}

After a finite number of regularizations, we obtain a regular $\dS^2$-spacetime
$T'$. Moreover, all these surgeries can actually be performed on $T \cup C \cup H$: the de Sitter annulus $A'$ can be glued
to $H \cup C$, giving rise to a HS-surface containing the circle of photons $C$ disconnecting the hyperbolic region $H$ from
the regular de Sitter region $T'$ (however, the other boundary component $C'$ have been modified and do not match anymore
the other hyperbolic region adjacent to $T$). Moreover, the circle of photons $C$ now contains no light-like singularity, hence its $\R\PP^1$-structure from
the de Sitter side coincide with the $\R\PP^1$-structure from the hyperbolic side. According to Proposition~\ref{pro.hypdegree0}
this structure is hyperbolic of degree $0$: it is the quotient of an interval $I$ of $\R\PP^1$ by a hyperbolic element $\gamma_{0}$,
with no fixed point inside $I$.

Denote by $\cF'_{1}$, $\cF'_{2}$ the isotropic foliations in $T'$.
Since we performed the surgery along half-leaves of $\cF_{1}$, leaves of $\cF'_{1}$ are still closed in $T'$. Moreover,
each of them accumulates at a unique point in $C$: the space of leaves of $\cF'_{1}$ is identified with $C$. Let
$\widetilde{T}'$ the universal covering of $T'$, and $\widetilde{\cF}'_{1}$ the lifting of $\cF_{1}$.
Recall that $\dS^2$ is naturally identified with $\R\PP^1 \times \R\PP^1 \setminus \kD$, where $\kD$ is the diagonal.
The developing map $\cD: \widetilde{T}' \to \R\PP^1 \times \R\PP^1 \setminus \kD$ maps every leaf of
$\widetilde{\cF}'_{1}$ into a fiber $\{ \ast \} \times \R\PP^1$. Besides, as affine lines, they are complete affine lines,
meaning that they still develop onto the entire geodesic $\{ \ast \} \times (\R\PP^1 \setminus \{ \ast \})$.
It follows that $\cD$ is a homeomorphism between $\widetilde{T}'$ and the open domain $W$ in $\R\PP^1 \times \R\PP^1 \setminus \kD$
comprising points with first component in the interval $I$, \textit{i.e.} the region in $\dS^2$ bounded by two $\gamma_{0}$-invariant
isotropic geodesics. Hence $T'$ is isometric to the quotient of $W$ by $\gamma_{0}$, which is well understood (see Figure~\ref{fig.W};
it has been more convenient to draw the lift $W$ in the region in $\uRP \times \uRP$ between the graph of the identity
map and the translation $\delta$, region which is isomorphic to
the universal cover of $\R\PP^1 \times \R\PP^1 \setminus \kD$).

\begin{figure}[ht]
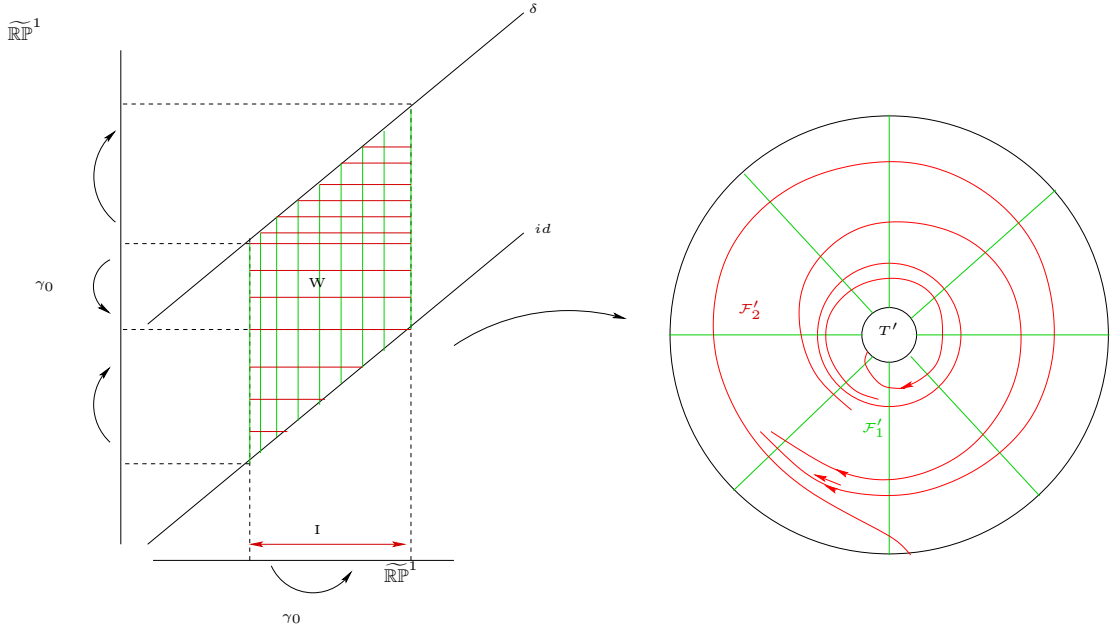

\begin{center}
\input W.pstex_t
\end{center}
\caption{The domain $W$ and its quotient $T'$.}
\label{fig.W}
\end{figure}

Hence the foliation $\cF'_{2}$ admits two compact leaves.
These leaves are CCC, but it is not yet in contradiction with the fact that $\Sigma$ is causal,
since the regularization might create such CCC.

The regularization
procedure is invertible and $T$ is obtained from $T'$ by \textit{positive} surgeries along future oriented
half-leaves of $\cF'_{1}$, \textit{i.e.} obeying the rules described in Remark~\ref{rk.cuttachyon2}.
We need to be more precise: pick a leaf $L'_{1}$ of $\cF'_{1}$. It corresponds to a vertical line in $W$ depicted
in Figure~\ref{fig.W}. We consider the first return $f'$ map from $L'_{1}$ to $L'_{1}$ along future oriented leaves of $\cF'_{2}$:
it is defined on an interval $]-\infty, x_{\infty}[$ of $L'_{1}$, where $-\infty$ corresponds to the end of $L'_{1}$ accumulating on
$C$. It admits two fixed points $x_{1} < x_{2} < x_{\infty}$, corresponding to the two compact leaves
of $\cF'_2$. The former is attracting and the latter is repelling.
Let $L_{1}$ be a leaf of $\cF_{1}$ corresponding, by the reverse surgery, to $L'_{1}$. We can assume
without loss of generality that $L_{1}$ contains no singularity. Let $f$ be the first return map from $L_{1}$ into itself
along future oriented leaves of $\cF_{2}$. There is a natural identification between $L_{1}$ and $L'_{1}$, and since all light-like singularities and tachyons in
$T \cup C$ are positive, \textit{the deviation of $f$ with respect to $f'$ is in the past direction,} \textit{i.e.} for every $x$ in
$L_{1} \approx L'_{1}$ we have $f(x) \leq f'(x)$ (it includes the case where $x$ is not in the domain of definition of $f$,
in which case, by convention, $f(x) = \infty$). In particular, $f(x_{2}) \leq x_{2}$. It follows that the future part of the oriented leaf
of $\cF_{2}$ through $x_2$  is trapped below its portion between $x_{2}$, $f(x_{2})$. Since it is closed, and not compact, it must accumulate on $C$.
But it is impossible since future oriented leaves near $C$ exit from $C$, intersect a space-like loop, and cannot go back because
of orientation considerations. The proposition is proved.
\end{proof}

\begin{figure}[ht]
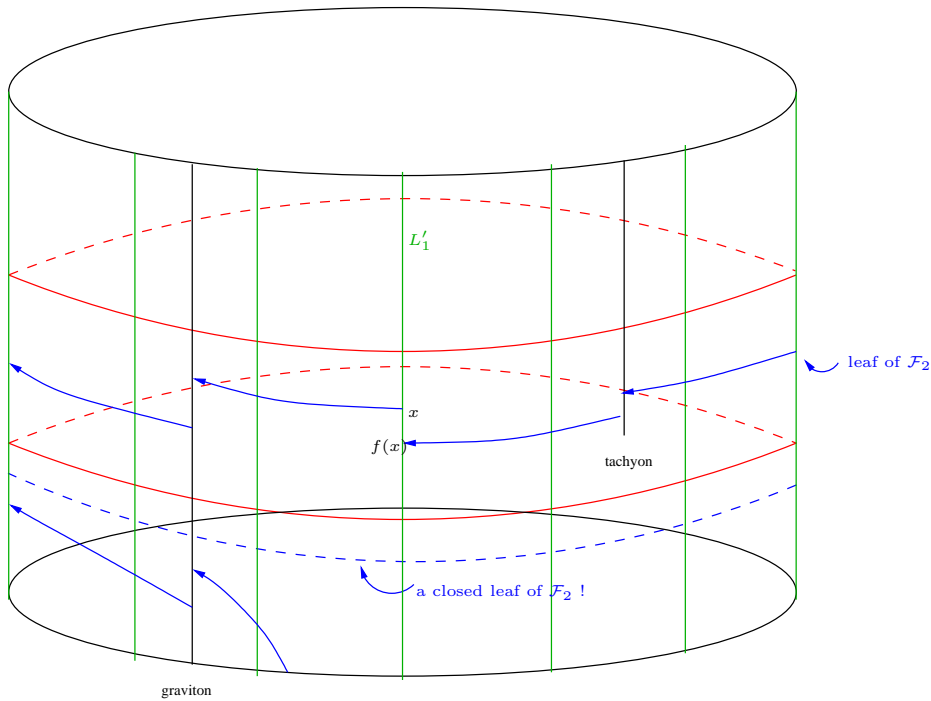

\begin{center}
\input return.pstex_t
\end{center}
\caption{First return maps. The identification maps along lines above time-like and light-like singularities
compose the almost horizontal broken arcs which are contained in leaves of $\mathcal{F}_2$. }
\label{fig.return}
\end{figure}

\begin{remark}
\label{rk:pospos}
In Proposition~\ref{pro.hyphyp} the positivity hypothesis is necessary. Indeed, consider a regular HS-surface
made of one annular past hyperbolic region connected to one annular future hyperbolic region by two de Sitter regions
isometric to the region $T' = W/\langle\gamma_{0}\rangle$ appearing in the proof of Proposition~\ref{pro.hyphyp}.
Pick up a photon $x$ in the past boundary of one of these de Sitter components $T$, and let $L$ be the leaf of $\cF_{1}$
accumulating in the past to $x$. Then $L$ accumulates in the future to a point $y$ in the future boundary component.
Cut along $L$, and glue back by a parabolic isometry fixing $x$ and $y$. The main argument in the proof above is that
if this surgery is performed in the positive way, so that $x$ and $y$ become positive tachyons, then the resulting spacetime
still admits two CCC, leaves of the foliation $\cF_{2}$. But if the surgery is performed in the negative way, with a sufficiently
big parabolic element, the closed leaves of $\cF_{2}$ in $T$ are destroyed, and every leaf of the new foliation $\cF_{2}$
in the new singular surface joins the two boundary components of the de Sitter region, which is therefore causal.
\end{remark}

\begin{theorem} \label{tm:thierry}
Let $\Sigma$ be a singular causal positive HS-surface, homeomorphic to the sphere. Then, it admits at most one past hyperbolic
component, and at most one future hyperbolic component. Moreover, we are in one of the following mutually exclusive situations:

\begin{enumerate}

\item \textit{Causally regular case: } There is a unique de Sitter component, which is an annulus connecting one past hyperbolic region
homeomorphic to the disk to a future hyperbolic region homeomorphic to the disk.

\item  \textit{Interaction of black holes or white holes: } There is no past or no future hyperbolic region, and every de Sitter region is a
either a disk containing a unique BTZ-like singularity, or a disk with an extreme BTZ-like singularity removed.

\item \textit{Big Bang and Big Crunch: } There is no de Sitter region, and only one hyperbolic region, which is a
singular hyperbolic sphere - if the time-like region is a future one, the singularity is called a Big Bang; if the time-like region is a
past one, the singularity is a Big Crunch.

\item \textit{Interaction of a white hole with a black hole: } There is no hyperbolic region. The surface $\Sigma$ contains one past BTZ-like singularity
and one future BTZ-like singularity - these singularities may be extreme or not.

\end{enumerate}
\end{theorem}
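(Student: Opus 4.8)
The plan is to recast the entire decomposition of $\Sigma$ into a bipartite tree and then read the four cases off its shape. First I would record the local separation property of photons: near a regular photon the surface is modelled on a neighbourhood of $\partial\HH^2_\pm\subset\HS^2$, which separates a hyperbolic side from a de Sitter side. Since a circle of photons is a connected embedded circle in the sphere $\Sigma$, it separates $\Sigma$ into two disks, carrying a single hyperbolic region on one side and a single de Sitter region on the other. Taking as nodes the closures of the hyperbolic and de Sitter regions and as edges the circles of photons, one obtains a finite graph $G$ which is a \emph{tree} (every embedded circle separates the sphere, so there are no cycles) and is \emph{bipartite} (each edge joins a hyperbolic node to a de Sitter node). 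The isolated photons, namely the cuspidal and extreme BTZ singularities, do not separate $\Sigma$ and are carried along as marked points or degenerate boundary ends of the adjacent regions.

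Next I would list the de Sitter nodes. By the corollary classifying de Sitter regions, each is a sphere (two interior BTZ), a disk (one interior BTZ) or an annulus (no interior BTZ); by Corollary~\ref{cor.Lclosed} every leaf of the isotropic foliations runs from a past to a future boundary component, each boundary component is either a circle of photons or an extreme BTZ point, and the two ends carry opposite time orientations. Hence the degree of a de Sitter node in $G$ is its number of photon-circle boundaries: a sphere has degree $0$; a disk degree $0$ (extreme BTZ cap) or $1$; an annulus degree $0$, $1$ or $2$. The decisive input is Proposition~\ref{pro.hyphyp}: if a de Sitter region $T$ is adjacent to a hyperbolic region $H$ with $\bar H$ not a disk, then $T$ \emph{caps off}, i.e. it is a disk with one BTZ or an annulus with one photon boundary and one extreme BTZ; in either case it has degree $1$ and is a leaf of $G$.

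With this in hand the classification is a short walk through $G$. If $G$ has no edge, $\Sigma$ is a single region: an all-hyperbolic sphere is a Big Bang or Big Crunch (case~(3)), while a single de Sitter region (sphere, capped disk, or doubly capped annulus) carries exactly one past and one future BTZ-type singularity, giving case~(4). If $G$ has edges and some hyperbolic region $H$ has $\bar H$ not a disk, then by the previous paragraph all its de Sitter neighbours are leaves, so $G$ is a star centred at $H$; thus $H$ is the \emph{only} hyperbolic region, every de Sitter region caps off, and we are in case~(2) (there is no hyperbolic region of the orientation opposite to $H$). Finally, if every hyperbolic region has $\bar H$ a disk, each is a leaf (a disk has a single photon-circle boundary), so bipartiteness forces exactly one central de Sitter node $T$ with hyperbolic disks attached: if $T$ is an annulus with two photon boundaries it joins one past and one future hyperbolic disk, the orientations being forced by Corollary~\ref{cor.Lclosed} (case~(1)); if $T$ has a single photon boundary it meets one hyperbolic disk and one cap, which is again case~(2). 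In every case there is at most one hyperbolic region of each time orientation, and the four situations are mutually exclusive, being separated by the presence of de Sitter regions, of hyperbolic regions, and of hyperbolic regions of both orientations.

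The main obstacle is not this combinatorics but the two geometric facts feeding it, both already available. The first is the bipartite separation property of photon circles, which I must check survives the presence of light-like singularities on a circle, using that such singularities still sit at the $\HH^2$/$\dS^2$ interface. The second, and the real heart, is the capping statement of Proposition~\ref{pro.hyphyp}, whose proof is where positivity and the absence of closed causal curves are genuinely used. I would also be careful with the degenerate boundary components: an extreme BTZ appears as a collapsed annulus end rather than as a photon circle, and tracking whether each de Sitter end is a photon circle or an extreme BTZ is exactly what separates case~(1) from case~(2) and pins down the two singular lines in case~(4).
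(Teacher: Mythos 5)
Your proof is correct and takes essentially the same approach as the paper: both rest on the same three inputs (the Poincar\'e--Hopf classification of de Sitter regions, Corollary \ref{cor.Lclosed} for time orientations, and Proposition \ref{pro.hyphyp} as the capping lemma), organized into the same case analysis on which regions exist. Your bipartite dual tree is simply an explicit bookkeeping of the sphere-topology and connectivity arguments that the paper leaves implicit (e.g.\ that a de Sitter annulus with two photon boundaries must join a past to a future hyperbolic disk, and that a capped hyperbolic region is the unique hyperbolic component), so there is no gap and no genuinely different route.
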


\begin{remark}
This Theorem, despite of the terminology inspired
from cosmology, has no serious pretention of relevance for physics. However these
appelations have the advantage to provide a reasonable intuition on the geometry of
the interaction. For example, in what is called a Big Bang, the spacetime is entirely contained in
the future of the singularity, and the singularity lines can be seen as massive particles or ``photons''
emitted by the initial singularity.

Actually, it is one of few examples suggesting that the prescription of the surface $\Sigma$
to be a sphere could be relaxed: whereas it seems hard to imagine that the spacetime could fail
to be a manifold at a singular point describing
a collision of particles, it is nevertheless not so hard, at least for us, to admit
that the topology of the initial singularity may be
more complicated, as it is the case in the regular case (see \cite{mess-notes}).
\end{remark}

\begin{proof}
If the future hyperbolic region and the past hyperbolic region is not empty, there must be a de Sitter annulus
connecting one past hyperbolic component to a future hyperbolic component. By Proposition \ref{pro.hyphyp}
these hyperbolic components are disks: we are in the causally regular case.

If there is no future hyperbolic region, but one past hyperbolic region, and at least one de Sitter region,
then there cannot be any annular de Sitter component
connecting two hyperbolic regions. Hence, the closure of each de Sitter component is a closed disk.
It follows that there is only one past hyperbolic component:
$\Sigma$ is an interaction of black holes. Similarly, if there is a de Sitter region, a future hyperbolic region but no past hyperbolic region,
$\Sigma$ is an interaction of white holes.

The remaining situations are the cases where $\Sigma$ has no de Sitter region, or no hyperbolic region. The former
case corresponds obviously to the description (3) of Big Bang or Big Crunch , and the latter to the description (4) of
an interaction between one black hole and one white hole.
\end{proof}

\begin{remark}
It is easy to construct singular hyperbolic spheres, \textit{i.e.} Big Bang or Big Crunch: take for example the double
of a hyperbolic triangle. The existence of interactions of a white hole with black hole is slightly less obvious.
Consider the HS-surface $\Sigma_{m}$ associated to the BTZ black hole $\mathcal{B}_{m}$. It can be described as follows:
take a point $p$ in $\dS^2$, let $d_{1}$, $d_{2}$ be the two projective circles in $\HS$ containing $p$, its opposite
$-p$, and tangent to $\partial\HH^2_{\pm}$. It decomposes $\HS^2$ in four regions. One of these components,
that we denote by $U$, contains the past hyperbolic region $\HH^2_{-}$. Then, $\Sigma_{m}$ is the quotient
of $U$ by the group generated by a hyperbolic isometry $\gamma_{0}$ fixing $p$, $-p$, $d_{1}$ and $d_{2}$.
Let $x_{1}$, $x_{2}$ be the points where $d_{1}$, $d_{2}$ are tangent to $\partial\HH^2_{-}$, and let $I_{1}$,
$I_{2}$ be the connected components of $\partial\HH^2_{-} \setminus \{ x_{1}, x_{2} \}$. We select the index
so that $I_{1}$ is the boundary of the de Sitter component $T_{1}$ of $U$ containing $p$. Now let $q$ be a point in
$T_{1}$ so that the past of $q$ in $T_{1}$ has a closure in $U$ containing a fundamental domain $J$ for the action of
$\gamma_{0}$ on $I_{1}$. Then there are two time-like geodesic rays starting from $q$ and accumulating at points
in $I_{1}$ which are extremities of a subinterval containing $J$. These rays project in $\Sigma_{m}$ onto two time-like
geodesic rays $l_{1}$ and $l_{2}$ starting from the projection $\bar{q}$ of $q$.
These rays admit a first intersection point $\bar{q}'$ in the past of $\bar{q}$. Let $l'_{1}$, $l'_{2}$
be the subintervalls in respectively $l_{1}$, $l_{2}$ with extremities $\bar{q}$, $\bar{q}'$: their union is a circle disconnecting
the singular point $\bar{p}$ from the boundary of the de Sitter component. Remove the component of $\Sigma \setminus (l'_1 \cup l'_2)$
adjacent to this boundary.
If $\bar{q}'$ is well-chosen, $l'_{1}$ and $l'_{2}$ have the same proper time. Then we can glue one to the other by a hyperbolic isometry. The resulting spacetime is as required an interaction between a BTZ black hole corresponding to $\bar{p}$ with a white hole corresponding
to $\bar{q}'$ - it contains also a tachyon of positive mass corresponding to $\bar{q}$.
\end{remark}

\section{Global hyperbolicity}
\label{sc:hyperbolicity}

In previous sections, we considered local properties of AdS manifolds with particles.
We already observed in section~\ref{sub.futpast}
that the usual notions of causality (causal curves, future, past, time functions...)
in regular Lorentzian manifolds still hold.
In this section, we consider the global character of causal properties of AdS manifolds with particles.
The main point presented here is that, as long as no interaction  appears, global hyperbolicity is
still a meaningfull notion for singular AdS spacetimes. This notion will be necessary in Section
6, as well as in the continuation of this paper \cite{colII} (see also the final part of
\cite{collision}).

In all this section $M$ denotes a singular AdS manifold admitting
as singularities only massive particules and no interaction.
The regular part of $M$ is denoted by $M^\ast$ in this section.
Since we will consider other Lorentzian metrics on
$M$, we need a denomination for the singular AdS metric~: we denote it $g_0$.

\subsection{Local coordinates near a singular line}
\label{sub.localcoord}

Causality notions only depend on the conformal class of the metric, and
AdS is conformally flat. Hence, AdS spacetimes and flat spacetimes share
the same local causal properties. Every regular AdS spacetime admits
an atlas for which local coordinates have the form $(z, t)$,
where $z$ describes the unit disk $D$ in the complex plane, $t$
the interval $]-1, 1[$ and such that the AdS metric is conformal to:
$$-dt^2 + \left|dz\right|^2~. $$

For the singular case considered here, any point $x$ lying on
a singular line $l$ (a massive particule of mass $m$), the same
expression holds, but we have to remove a wedge $ \{ 2\alpha\pi <Arg(z) < 2\pi \}$ where
$\alpha=1-m$ is positive, and to glue the two sides of this wedge.
Consider the map $z \to \zeta = z^{1/\alpha}$: it sends the disk $D$ with a wedge removed
onto the entire disk, and is compatible with the glueing of the sides of the wedge.
Hence, a convenient
local coordinate system near $x$ is $(\zeta, t)$ where $(\zeta,t)$ still lies in $D \times ]-1, 1[$.
The singular AdS metric is then, in these coordinates, conformal to:
$$ (1-m)^2 \frac{\left|d\zeta\right|^2}{\left|\zeta\right|^{2m}} - dt^2~. $$

In these coordinates, future oriented causal curves can be parametrized by
the time coordinate $t$, and satisfies:
$$ \frac{\left|\zeta'(t)\right|}{\left|\zeta\right|^{m}} \leq \frac{1}{1-m}~. $$

Observe that all these local coordinates define a differentiable atlas on the
topological manifold $M$ for which the AdS metric on the regular part is smooth.

\subsection{Achronal surfaces}
Usual definitions in regular Lorentzian manifolds still apply to the singular AdS spacetime $M$:

\begin{defi}
A subset $S$ of $M$ is \textit{achronal} (resp. \textit{acausal}) if there is no non-trivial time-like (resp. causal)
curve joining two points in $S$.
It is only \textit{locally achronal} (resp. \textit{locally acausal}) if every point in $S$ admits a neighborhood $U$ such that
the intersection $U \cap S$ is achronal (resp. acausal) inside $U$.
\end{defi}

Typical examples of locally acausal subsets are space-like surfaces, but the definition above
also includes non-differentiable "space-like" surfaces, with only Lipschitz regularity. Lipschitz
space-like surfaces provide actually the general case if one adds the {\it edgeless} assumption~:

\begin{defi}
A locally achronal subset $S$ is \textit{edgeless} if every point $x$ in $S$ admits a neighborhood
$U$ such that every causal curve in $U$ joining one point of the past of $x$ (inside $U$)
to a point in the future (in $U$) of $x$ intersects $S$.
\end{defi}

In the regular case, closed edgeless locally achronal subsets are embedded locally Lipschitz surfaces.
More precisely, in the coordinates $(z,t)$ defined in section~\ref{sub.localcoord}, they are graphs
of $1$-Lipschitz maps defined on $D$.

This property still holds in $M$, except the locally Lipschitz property which is not valid anymore at
singular points, but only a weaker weighted version holds:
closed edgeless acausal subsets containing $x$ corresponds to H\"{o}lder functions
$f: D \to ]-1, 1 [$ differentiable almost everywhere and
satisfying:
$$\Vert d_\zeta f \Vert < \frac{\left|\zeta\right|^{-m}}{1-m}~. $$
Go back to the coordinate system $(z,t)$.
The acausal subset is then the graph of a $1$-Lipschitz map $\varphi$ over
the disk minus the wedge. Moreover, the values of $\varphi$ on the boundary of
the wedge must coincide since they have to be send one to the other by the rotation
performing the glueing. Hence, for every $r < 1$:
$$\varphi(r)=\varphi(re^{i2\alpha\pi})~. $$
We can extend $\varphi$ over the wedge by defining $\varphi(re^{i\theta}) = \varphi(r)$
for $2\alpha\pi \leq \theta \leq 2\pi$. This extension over the entire $D \setminus \{ 0 \}$ is then clearly
$1$-Lipschitz. It therefore extends to $0$. We have just proved:

\begin{lemma}
The closure of any closed edgeless achronal subset of $M^\ast$
is a closed edgeless achronal subset of $M$.
\end{lemma}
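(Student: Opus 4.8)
The statement is local along the singular lines, so the plan is to reduce it to the model computation of Section~\ref{sub.localcoord}. First I would observe that since $S$ is closed in $M^\ast$ and $\cL = M\setminus M^\ast$, the closure taken in $M$ satisfies $\bar S\cap M^\ast = S$, whence $\bar S\setminus S = \bar S\cap\cL\subseteq\cL$. Thus away from $\cL$ the set $\bar S$ coincides with $S$ and is there already a closed edgeless achronal Lipschitz graph, and the whole problem reduces to understanding $\bar S$ near a point $x$ lying on a massive singular line.

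Fixing such an $x$, I would use the coordinates $(z,t)\in D\times\,]-1,1[$ in which the metric is conformal to $-dt^2+|dz|^2$, the disk being amputated of the wedge $\{2\alpha\pi<\operatorname{Arg}(z)<2\pi\}$ with its two radial edges glued by the rotation of angle $2\alpha\pi$ (here $\alpha=1-m$). By the discussion preceding the statement, $S$ is in this chart the graph of a $1$-Lipschitz function $\varphi$ on the disk-minus-wedge, and the gluing forces the boundary compatibility $\varphi(r)=\varphi(re^{i2\alpha\pi})$ for every $r<1$. I would then extend $\varphi$ across the wedge by $\varphi(re^{i\theta}):=\varphi(r)$ for $2\alpha\pi\le\theta\le2\pi$, check that the extension is $1$-Lipschitz on $D\setminus\{0\}$, and conclude that, being uniformly continuous near the puncture, it extends to $0$. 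The graph of this extended $\varphi$ over all of $D$ is a Lipschitz graph agreeing with $S$ off the singular ray, hence is exactly $\bar S$ near $x$; and a Lipschitz graph over the full disk in a conformally flat chart is automatically closed, achronal (time-like directions are steeper than slope $1$) and edgeless.

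The only genuinely non-formal point is the $1$-Lipschitz bound of the radial extension, so I would isolate its mechanism. For two points both in the wedge the estimate is immediate, since $|\varphi(r)-\varphi(s)|\le|r-s|\le|x-y|$. For $x$ in the wedge and $y$ in the domain, the straight segment $[x,y]$ must cross one of the two radial edges, say the angle-$0$ edge at a point $w=\rho e^{i0}$; then, writing $\varphi^{\mathrm{ext}}(x)=\varphi(re^{i0})$ and using that $re^{i0}$ and $w$ lie on the same radius, one gets $|\varphi^{\mathrm{ext}}(x)-\varphi(w)|\le|r-\rho|\le|x-w|$ together with $|\varphi(w)-\varphi(y)|\le|w-y|$, whose sum is $|x-y|$. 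If instead $[x,y]$ crosses the angle-$2\alpha\pi$ edge, the same argument works after invoking the compatibility relation to replace the reference value by $\varphi(re^{i2\alpha\pi})$. Thus the compatibility condition is precisely what makes the extension $1$-Lipschitz, and this is the step I expect to be the main (and essentially the only) obstacle; the achronality and edgelessness of the resulting graph are then a routine transport through the conformal chart of the regular theory recalled in Section~\ref{sub.localcoord}.
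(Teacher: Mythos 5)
Your proof is correct and follows essentially the same route as the paper's: working in the local coordinates $(z,t)$ of Section~\ref{sub.localcoord}, writing the achronal set as the graph of a $1$-Lipschitz map $\varphi$ on the disk minus the wedge, invoking the gluing compatibility $\varphi(r)=\varphi(re^{i2\alpha\pi})$, extending $\varphi$ radially over the wedge, and concluding from the Lipschitz extension across the puncture. The only difference is that you spell out the verification that the radial extension is $1$-Lipschitz (via the segment-crossing-an-edge estimate), a step the paper dismisses as ``clearly'' true.
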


\begin{defi}
A space-like surface $S$ in $M$ is a closed edgeless locally acausal subset whose
intersection with the regular part $M^\ast$ is a smooth embedded space-like surface.
\end{defi}

\subsection{Time functions}
\label{sub.singtime}

As in the regular case, we can define time functions as maps $T: M \to \RR$ which
are strictly increasing along any future oriented causal curve. For non-singular
spacetimes the existence is related to \textit{stable causality~:}

\begin{defi}
Let $g$, $g'$ be two Lorentzian metrics on the same manifold $X$.
Then, $g'$ dominates $g$ if
every causal tangent vector for $g$ is time-like for $g'$. We denote
this relation by $g \prec g'$.
\end{defi}

\begin{defi}
A Lorentzian metric $g$ is stably causal if there is a metric $g'$ such that
$g \prec g'$, and such that $(X, g')$ is chronological, i.e. admits no periodic time-like
curve.
\end{defi}

\begin{theorem}[See \cite{beem}]
\label{thm.stabletime}
A Lorentzian manifold $(M, g)$ admits a time function if and only if it is
\textit{stably causal.} Moreover, when a time function exists, then
there is a smooth time function.
\end{theorem}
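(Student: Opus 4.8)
The plan is to prove the equivalence through the cyclic chain
\[
\text{(stably causal)}\;\Rightarrow\;\text{(temporal function)}\;\Rightarrow\;\text{(time function)}\;\Rightarrow\;\text{(stably causal)},
\]
where by a \emph{temporal function} I mean a smooth $T\colon M\to\R$ whose gradient $\nabla T$ is everywhere time-like and past-directed. This single chain yields both assertions of the theorem: the equivalence is immediate, and the ``moreover'' clause follows by composition, since if any merely continuous time function exists then the third arrow makes $(M,g)$ stably causal, whence the first arrow produces a temporal function, which is in particular a \emph{smooth} time function. This is the classical argument of Hawking, with the smoothing built into the first arrow following Bernal--S\'anchez; it is exactly what \cite{beem} records, and I would present it as three steps.

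The two auxiliary arrows come first. A temporal function is a time function because, along any future-directed causal vector $v$, the reverse Cauchy--Schwarz inequality for a time-like $\nabla T$ gives $dT(v)=g(\nabla T,v)>0$ (the past-directed gradient pairs with a strictly positive sign against future causal vectors), so $T$ increases strictly along future causal curves. The arrow \emph{time function $\Rightarrow$ stably causal} is the one place I would invoke a limit-curve argument: if $(M,g)$ were not stably causal, then \emph{every} metric $g'\succ g$ would be non-chronological, so choosing cones $g_n\searrow g$ yields, for each $n$, a closed $g_n$-time-like curve; extracting a limit with the Lorentzian limit-curve theorem (using an auxiliary complete Riemannian metric to confine the curves to a compact set) produces a nonconstant closed $g$-causal curve, contradicting the strict monotonicity of the time function along causal curves.

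The substantial arrow is \emph{stably causal $\Rightarrow$ temporal function}, which I would split in two. First, construct a \emph{continuous} time function by Hawking's volume method: stable causality furnishes $\hat g\succ g$ that is chronological, and by shrinking the cones slightly one may take $\hat g$ itself stably causal. Fix a finite measure $\mu$ on $M$, positive on open sets, and set
\[
T_0(p)=\mu\bigl(I^-_{\hat g}(p)\bigr)-\mu\bigl(I^+_{\hat g}(p)\bigr).
\]
If $q\in J^+_g(p)\setminus\{p\}$ then $q\in I^+_{\hat g}(p)$ because $g\prec\hat g$, so $I^-_{\hat g}(p)\subsetneq I^-_{\hat g}(q)$ and $I^+_{\hat g}(q)\subsetneq I^+_{\hat g}(p)$ with differences of positive $\mu$-measure by chronology of $\hat g$; hence $T_0$ increases strictly along $g$-causal curves. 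Continuity of $T_0$ is the first delicate point, and it is where the stable causality of $\hat g$ enters, guaranteeing the appropriate semicontinuity of the past/future volume functions and that the level boundaries are $\mu$-null. Second, I would smooth $T_0$ into a temporal function: since $T_0$ is already strictly monotone for the strictly wider cones of $\hat g$, there is a uniform margin that lets one mollify $T_0$ chart by chart, glued by a partition of unity, so that the result stays strictly monotone along $g$-causal curves and acquires an everywhere time-like gradient.

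The main obstacle is precisely this smoothing. A naive convolution destroys the time-like character of the gradient near points where the $g$-cones are ``tight'', and the whole difficulty, which is the achievement of Bernal--S\'anchez recorded in \cite{beem}, is to exploit the quantitative slack in the strict inclusion $g\prec\hat g$ to choose the mollification scale and the partition of unity so that $\nabla\tilde T$ remains inside the dual cone at every point. A secondary but genuine technicality is the continuity of the Hawking volume function above; both are handled in the cited reference, and in the present singular setting one applies the statement using the differentiable atlas set up in Section~\ref{sub.localcoord}, on which the AdS metric is smooth away from the particles.
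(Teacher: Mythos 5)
First, a point of orientation: the paper contains no proof of Theorem~\ref{thm.stabletime} at all --- it is imported as a black box from \cite{beem} --- so your proposal can only be judged on its own merits, and it has two genuine gaps. The more serious one is your arrow \emph{time function $\Rightarrow$ stably causal}. The limit-curve argument fails at the start: if stable causality fails you do get a closed $g_n$-timelike curve $\gamma_n$ for each metric in a sequence of cones shrinking to those of $g$, but nothing forces the $\gamma_n$ to meet a fixed compact set. An auxiliary complete Riemannian metric does not ``confine'' anything --- completeness says nothing about where closed curves sit --- and the failure of stable causality may perfectly well be located ``at infinity'', with $\gamma_n$ leaving every compact set as $n\to\infty$, so the limit-curve theorem has no accumulation point to work from. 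Even when the curves do accumulate, the limit can degenerate to a constant curve unless you prove a uniform lower bound on their size, which you do not. This is not a repairable slip in a routine step: for a merely \emph{continuous} time function this implication is one of the deep parts of the theorem, whose known proofs go either through smoothing (Fathi--Siconolfi: a continuous time function yields a smooth temporal one, after which one widens the cones inside $\{dT>0\}$) or through Minguzzi's theorem that $K$-causality coincides with stable causality. The elementary argument --- $g'=g-\lambda\, dT\otimes dT$ has strictly wider cones and is chronological because $dT$ is nonzero on $g'$-causal vectors --- requires $T$ smooth, which is exactly what you cannot assume at that point of your cycle.

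The second gap is in the Hawking step. With a single auxiliary metric $\hat g$, the function $T_0(p)=\mu\bigl(I^-_{\hat g}(p)\bigr)-\mu\bigl(I^+_{\hat g}(p)\bigr)$ is indeed strictly increasing along $g$-causal curves, but it need not be continuous, and stable causality of $\hat g$ does not rescue this: continuity of the volume functions is (for distinguishing spacetimes) equivalent to reflectivity, i.e.\ to causal continuity, which is strictly stronger than stable causality. For instance, two-dimensional Minkowski space with the closed half-line $\{t=0,\ x\le 0\}$ removed is stably causal, yet its future volume function jumps at points on the past light cone of the endpoint of the deleted half-line. Hawking's actual construction averages over a one-parameter family of metrics with strictly nested cones, $t(p)=\int_0^1 \mu\bigl(I^-_{g_s}(p)\bigr)\,ds$, and it is precisely this averaging that produces continuity; the single-metric shortcut loses it. Finally, the mollification step --- which is the entire content of the ``moreover'' clause --- is deferred rather than proved, and attributing it to \cite{beem} is anachronistic: the smoothing of time functions is due to Bernal--S\'anchez and Fathi--Siconolfi and postdates that book. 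Deferring the hard analysis to the literature is defensible here, since the paper itself does exactly that; but then the two arrows you chose to argue in detail must be correct, and they are the ones that break.
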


\begin{remark}
\label{rk.stablecausal}
In section~\ref{sub.localcoord} we defined some differentiable atlas on
the manifold $M$. For this differentiable structure, the null cones of $g_0$
degenerate along singular lines to half-lines tangent to the "singular" line
(which is perfectly smooth for the selected differentiable atlas). Obviously,
we can extend the definition of domination to the more general case
$g_0 \prec g$ where $g_0$ is our singular metric and $g$ a smooth regular
metric. Therefore, we can define the stable causality of in this context:
$g_0$ is stably causal if there is a smooth Lorentzian metric $g'$ which is
chronological and such that $g_0 \prec g'$.
Theorem~\ref{thm.stabletime} is still valid in this more general context.
Indeed, there is a smooth Lorentzian metric $g$ such that $g_0 \prec g \prec g'$,
which is stably causal since $g$ is dominated by the achronal metric $g'$.
Hence there is a time function
$T$ for the metric $g$, which is still a time function for $g_0$ since $g_0 \prec g$:
causal curves for $g_0$ are causal curves for $g$.
\end{remark}

\begin{lemma}
\label{le.singstable}
The singular metric $g_0$ is stably causal if and only if its restriction
to the regular part $M^\ast$ is stably causal. Therefore, $(M, g_0)$ admits
a smooth time function if and only if $(M^\ast, g_0)$ admits a time function.
\end{lemma}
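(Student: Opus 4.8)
The plan is to prove the stable-causality equivalence first; the statement about smooth time functions then follows by combining it with Theorem~\ref{thm.stabletime} on $M^\ast$ and with its singular counterpart (Remark~\ref{rk.stablecausal}) on $M$. One implication is immediate: if $g'$ is a smooth chronological metric on $M$ with $g_0\prec g'$, then its restriction to $M^\ast$ is again a smooth Lorentzian metric dominating $g_0$, and it is chronological since a closed time-like curve of $g'|_{M^\ast}$ would be one of $g'$. Hence $g_0|_{M^\ast}$ is stably causal.

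For the converse, assume $g_0|_{M^\ast}$ is stably causal. By Theorem~\ref{thm.stabletime} there is a smooth time function $\tau_0\colon M^\ast\to\RR$ for $g_0$; composing with $\arctan$ we may assume $\tau_0$ is bounded, which does not affect the property of being strictly increasing along future causal curves. The goal is to produce a chronological metric on all of $M$ dominating $g_0$. A tempting approach is to patch a witness $h$ of stable causality on $M^\ast$ with the flat model metric $|d\zeta|^2-dt^2$ near each singular line: in the coordinates $(\zeta,t)$ of section~\ref{sub.localcoord} the $g_0$-cones are so narrow near $l=\{\zeta=0\}$ (because of the bound $|\zeta'(t)|/|\zeta|^m\le 1/(1-m)$) that this flat metric dominates $g_0$ on a small tube, and the domination relation is preserved by convex combinations. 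However the resulting patched metric is only seen to dominate $g_0$, with no control on its global chronology, which is exactly the point at issue. I will therefore instead extend the time function $\tau_0$ itself across the singular lines.

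The heart of the matter is the following local statement: near a massive singular line $l$, the bounded function $\tau_0$ extends continuously to $l$. This rests on the degeneracy of the null cones along $l$. Working in the coordinates $(\zeta,t)$ and integrating the causal bound $|\zeta'(t)|\le|\zeta|^m/(1-m)$, one checks that a future causal curve leaving $l$ needs $t$-time at least $|\zeta|^{1-m}$ to reach the radius $|\zeta|$, and symmetrically for curves returning to $l$; consequently, for every regular point $x=(\zeta,t)$ one has the causal relations $(0,\,t-|\zeta|^{1-m})\preceq x\preceq(0,\,t+|\zeta|^{1-m})$, and more generally $x$ is squeezed between two regular reference points lying on a fixed small circle $\{|\zeta|=r\}$ at times $t\mp C_r$, with $C_r\to0$ as $r\to0$. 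Since $\tau_0$ is monotone along causal curves, this pinches the oscillation of $\tau_0$ near any point of $l$: as $r\to0$ the causal diamond between $(0,t_0-\epsilon)$ and $(0,t_0+\epsilon)$ collapses onto $l$, forcing $\lim_{x\to p}\tau_0(x)$ to exist for every $p\in l$. The main obstacle is precisely to make this collapse quantitative, i.e. to bound the variation of $\tau_0$ over the shrinking diamonds uniformly; the estimate above, together with the fact that the vertical segments $\zeta=\mathrm{const}$ are time-like, reduces it to the continuity of $\tau_0$ on $M^\ast$ along short causal segments.

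The continuous extension $\tau\colon M\to\RR$ is then non-decreasing along every future $g_0$-causal curve of $M$ and strictly increasing along those contained in $M^\ast$. Strictness along curves that meet or run along a singular line follows from the same cone degeneracy: near $l$ every future causal curve has strictly increasing $t$-coordinate, since its $g_0$-cone is tangent to $l$, so the regular squeezing points already produced give $\tau(0,t_1)<\tau(0,t_2)$ whenever $t_1<t_2$. Hence $\tau$ is a genuine continuous time function on $(M,g_0)$; in particular $M$ contains no closed causal curve. Feeding $\tau$ into the smoothing part of Theorem~\ref{thm.stabletime}, applied in the differentiable structure of section~\ref{sub.localcoord}, yields a smooth time function, and widening the $g_0$-cones slightly while keeping the differential of this function time-like produces a smooth chronological metric $g'$ with $g_0\prec g'$. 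Thus $g_0$ is stably causal on $M$, which completes the equivalence and, together with Theorem~\ref{thm.stabletime} and Remark~\ref{rk.stablecausal}, the statement on smooth time functions.
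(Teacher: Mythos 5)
Your forward implication is fine, but the converse has a genuine gap, and it is not merely the step you flag as needing to be made quantitative: the key intermediate claim --- that a (smooth, bounded) time function $\tau_0$ on $M^\ast$ always extends continuously across a massive singular line --- is false. Here is an explicit counterexample in the local model of Section~\ref{sub.localcoord}. Set $v=|\zeta|^{1-m}$, so that $g_0$ is conformal to $dv^2+(1-m)^2v^2\,d\theta^2-dt^2$ on $M^\ast$ and every future causal vector satisfies $|\dot v|\le \dot t$. Let $\Psi:\RR\to[0,1]$ be smooth and non-decreasing with $\Psi\equiv 0$ on $(-\infty,-1]$, $\Psi\equiv 1$ on $[1,+\infty)$, and define
$$\tau_0(\zeta,t)\ =\ \Psi\!\left(t/v\right)+2t .$$
This function is smooth on $M^\ast$, and it is a time function there: writing $s=t/v$, for any future causal vector
$$d\tau_0(\dot v,\dot\theta,\dot t)\ \ge\ \dot t\,\bigl(\partial_t\tau_0-|\partial_v\tau_0|\bigr)\ =\ \dot t\left(2+\frac{\Psi'(s)}{v}\bigl(1-|s|\bigr)\right)\ >\ 0,$$
since $\Psi'(s)=0$ wherever $|s|\ge 1$ and $1-|s|>0$ wherever $\Psi'(s)>0$ (the angular direction contributes nothing). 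Yet along the curves $t=\lambda v$ one has $\tau_0\to\Psi(\lambda)$ as $v\to 0$, so $\tau_0$ has no limit at the singular point $(\zeta,t)=(0,0)$: it jumps from $0$ on the past side of the line to $1$ on the future side, with the whole transition squeezed into the spacelike wedge $|t|<v$. No causality is violated, because points at scale $v$ where $\tau_0\approx 1$ do lie in the causal future of the points at comparable scale where $\tau_0\approx 0$. This is exactly where your squeezing argument collapses: your reference points $y_\pm(r)$ both converge to the same singular point, so nothing forces $\tau_0(y_+(r))-\tau_0(y_-(r))\to 0$; in the example this difference tends to $1$. Consequently the strategy of extending an arbitrary given time function cannot work; one would need to manufacture a time function with uniform behaviour near the lines, which is essentially the original problem in disguise.

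Ironically, the approach you set up and discarded is the paper's actual proof, and your objection to it (``no control on its global chronology'') is precisely what the paper's one extra idea handles. One does not patch with an arbitrary model metric: the metric $g''$ on a tubular neighborhood $U$ of the singular locus is chosen (after shrinking $U$) to be \emph{dominated by} the chronological witness $g'$, i.e.\ $g''\prec g'$ there. Then for the interpolation $g_1=ag''+bg'$ one has $g_0\prec g_1$ on $M$, and every $g_1$-timelike vector at a point of $M^\ast$ is $g'$-timelike (it is $g''$-timelike or $g'$-timelike, and $g''$-causal vectors are $g'$-timelike). Hence a closed $g_1$-timelike curve can be perturbed off the one-dimensional singular locus, staying timelike, and becomes a closed $g'$-timelike curve in $M^\ast$ --- contradicting chronology of $g'$. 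So $g_1$ is a smooth chronological metric dominating $g_0$, proving stable causality of $(M,g_0)$. That domination trick near the singular lines is the missing idea; without it, neither patched metrics nor extended time functions yield the lemma.
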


\begin{proof}
The fact that $(M^\ast, g_0)$ is stably causal as soon as $(M, g_0)$ is stably
causal is obvious. Let us assume that $(M^\ast, g_0)$ is stably causal:
let $g'$ be smooth chronological Lorentzian metric on $M^\ast$ dominating
$g_0$. On the other hand, using the local models around singular lines,
it is easy to construct an chronological Lorentzian metric $g''$ on a tubular neighborhood $U$
of the singular locus of $g_0$ (the fact that $g'$ is chronological implies that
the singular lines are not periodic). Actually, by reducing the tubular neighborhood $U$
and modyfing $g''$ therein, one can assume that
$g'$ dominates $g''$ on $U$. Let $U' $ be a smaller tubular neighborhood of
the singular locus such that $\overline{U}' \subset U$, and let $a$, $b$ be
a partition of unity subordonate to $U$, $M \setminus U'$. Then
$g_1 = ag'' + bg'$ is a smooth Lorentzian metric dominating $g_0$.
Moreover, we also have $g_1 \prec g'$ on $M^\ast$. Hence any time-like curve
for $g_1$ can be slightly perturbed to a time-like curve for $g'$
avoiding the singular lines. It follows that $(M, g_0)$ is stably causal.
\end {proof}

\subsection{Cauchy surfaces}

\begin{defi}
A space-like surface $S$ is a Cauchy surface if it is acausal and intersects every inextendible causal curve in $M$.
\end{defi}

Since a Cauchy surface is acausal, its future $I^+(S)$ and its past $I^-(S)$ are disjoint.

\begin{remark}
\label{rk.ghnongh}
The regular part of a Cauchy surface in $M$ is not a Cauchy surface in
the regular part $M^\ast$, since causal curves can exit the regular region through a
time-like singularity.
\end{remark}

\begin{defi}
A singular AdS spacetime is globally hyperbolic if it admits a Cauchy surface.
\end{defi}

\begin{remark}
We defined Cauchy surfaces as smooth objects for further requirements in this paper,
but this definition can be generalized for non-smooth locally achronal closed subsets.
This more general definition leads to the same notion of globally hyperbolic spacetimes,
i.e. singular spacetimes admitting a non-smooth Cauchy surface also admits a smooth one.
\end{remark}

\begin{prop}
\label{pro.ghtime}
Let $M$ be a singular AdS spacetime without interaction and with singular
set reduced to massive particles. Assume that $M$ is globally hyperbolic.
Then $M$ admits a time function $T: M \to \RR$ such that every level
$T^{-1}(t)$ is a Cauchy surface.
\end{prop}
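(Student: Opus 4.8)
The plan is to adapt Geroch's classical construction of a Cauchy time function to the singular setting. Since $M$ is globally hyperbolic it contains a Cauchy surface $S$; in particular it contains no closed causal curve (such a curve would meet the acausal $S$ more than once), and, passing through the regular part via Lemma~\ref{le.singstable}, it is stably causal, so Theorem~\ref{thm.stabletime} already provides \emph{some} time function. The content of the proposition is the stronger assertion that the levels can be taken to be Cauchy surfaces. To produce such a function I would fix a finite Borel measure $\mu$ on $M$, positive on every open set (for instance the volume of an auxiliary complete Riemannian metric, renormalised in charts so that $\mu(M)<\infty$), and set $t_-(p)=\mu(J^-(p))$, $t_+(p)=\mu(J^+(p))$, the causal past and future being taken in the singular sense of section~\ref{sub.futpast}. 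Both are finite and positive, and pushing $p$ towards the future strictly enlarges $J^-(p)$ while strictly shrinking $J^+(p)$; hence $T:=\log(t_-/t_+)$ is strictly increasing along every future-oriented causal curve, i.e.\ a time function in the sense of section~\ref{sub.singtime}.

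The first technical point is the continuity of $t_\pm$. Away from the singular lines this is Geroch's argument verbatim, using global hyperbolicity and the closedness of $J^\pm$. Near a massive particle I would work in the local coordinates $(\zeta,t)$ of section~\ref{sub.localcoord} and use the causal-curve estimate $|\zeta'(t)|/|\zeta|^m\le(1-m)^{-1}$ to check that the causal cones, and therefore the sets $J^\pm(p)$, vary continuously as $p$ crosses or runs along the singular line; this yields continuity of $t_\pm$, hence of $T$, on all of $M$.

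Next I would show that each level $T^{-1}(c)$ is a Cauchy surface. Let $\gamma$ be an inextendible causal curve. Because $\gamma$ meets the Cauchy surface $S$, global hyperbolicity forces $t_+\to0$ at the future end of $\gamma$ and $t_-\to0$ at its past end, so $T\to+\infty$ and $T\to-\infty$ respectively; by continuity and strict monotonicity, $T$ restricts to an increasing homeomorphism of $\gamma$ onto $\RR$, and therefore $\gamma$ meets $T^{-1}(c)$ exactly once. Thus $T^{-1}(c)$ is achronal and meets every inextendible causal curve, and its closedness and edgelessness as a subset of $M$ follow from the lemma asserting that the closure in $M$ of a closed edgeless achronal subset of $M^\ast$ is again closed, edgeless and achronal.

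It remains to upgrade $T$ to a \emph{smooth} time function whose levels are space-like Cauchy surfaces in the required sense. For this I would smooth $T$ with respect to the differentiable atlas of section~\ref{sub.localcoord}, in which the singular lines are smooth curves and $g_0$ is dominated by a smooth metric $g_1$ (constructed in the proof of Lemma~\ref{le.singstable}): convolving $T$ along the flow of a $g_1$-timelike field, in the manner of the standard smoothing of Cauchy time functions, yields a smooth function still strictly increasing along $g_0$-causal curves, with Cauchy levels that one checks in the local model to be space-like in the weighted H\"older sense of section~\ref{sub.localcoord}. The main obstacle throughout is precisely the behaviour at the singular lines: one must verify that the continuity of $t_\pm$, the transversality of inextendible causal curves to the levels, and the smoothing step all survive the conical degeneration of $g_0$, and it is the quantitative control furnished by the local coordinates of section~\ref{sub.localcoord} that makes each of these verifications possible.
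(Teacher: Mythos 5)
Your plan rebuilds Geroch's volume-function argument directly on the singular metric $g_0$; the paper does something genuinely different, and the difference is exactly where your proposal has a gap. The paper never re-proves Geroch in the singular category. Instead it widens the light cones of $g_0$ to a \emph{smooth} Lorentzian metric $g'$ on all of $M$ (using the differentiable atlas of section~\ref{sub.localcoord}, in which the singular lines are smooth curves), arranged so that $g_0 \prec g'$, so that $S$ is still acausal for $g'$, and so that every inextendible $g'$-causal curve meets $S$; then $(M,g')$ is an ordinary smooth globally hyperbolic spacetime, the classical Geroch theorem applies to it as a black box, and because every $g_0$-causal curve is $g'$-causal, the resulting $g'$-time function and its $g'$-Cauchy levels are automatically a time function and Cauchy surfaces for $g_0$. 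This detour exists precisely because the steps you describe as routine verifications are not available off the shelf for singular spacetimes.

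Concretely, the gap is in the two sentences you pass over quickly. First, ``global hyperbolicity forces $t_+\to 0$ at the future end of $\gamma$'' is the heart of Geroch's proof, and its standard derivation is global: if $t_+\not\to 0$, finiteness of $\mu$ gives a point $q\in\bigcap_s J^+(\gamma(s))$, so $\gamma$ is imprisoned in $J^+(\gamma(s_0))\cap J^-(q)$, and one needs (i) compactness of such causal diamonds and (ii) the fact that no inextendible causal curve can be imprisoned in a compact set (a strong-causality statement). Neither fact is proved anywhere for singular AdS spacetimes -- the paper's definition of global hyperbolicity is only the existence of a Cauchy surface -- and both require limit-curve arguments that must handle sequences of causal curves accumulating on, crossing, or running along the cone lines. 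Second, continuity of $t_\pm$ is likewise not a local matter: lower semicontinuity is easy, but upper semicontinuity again rests on closedness of $J^\pm$ and limit-curve/compactness arguments, so your local estimate $\left|\zeta'(t)\right|/\left|\zeta\right|^{m}\le (1-m)^{-1}$ near a particle cannot supply it. (Two lesser points: your opening claim that absence of closed causal curves gives stable causality is a non sequitur, though inessential; and the final convolution-smoothing step is itself a delicate theorem even in the regular case, where the paper instead simply quotes the regular theory for $g'$.) Without the imprisonment and continuity inputs your argument does not close; supplying them amounts to rebuilding a substantial piece of causality theory in the singular setting -- or to adopting the paper's reduction to a smooth dominating metric.
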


\begin{proof}
This is a well-known theorem by Geroch in the regular case, even for
general globally hyperbolic spacetimes without compact Cauchy surfaces (\cite{gerochdependence}).
But, the singular version does not follow immediately by applying this
regular version to $M^\ast$ (see Remark~\ref{rk.ghnongh}).

Let $l$ be an inextendible causal curve in $M$. It intersects the Cauchy surface $S$,
and since $S$ is achronal, $l$ cannot be periodic. Therefore, $M$ admits no periodic
causal curve, i.e. is \textit{acausal.}


Let $U$ be a small tubular neighborhood of $S$ in $M$, such that the boundary $\partial U$
is the union of two space-like hypersurfaces $S_-$, $S_+$ with $S_- \subset I^-(S)$,
$S_+ \subset I^+(S)$,
and such that every inextendible future oriented causal curve in $U$ starts from
$S_-$, intersects $S$ and then hits $S^+$.
Any causal curve starting from $S_-$ leaves immediatly $S_-$,
crosses $S$ at some point $x'$, and then cannot cross $S$ anymore. In particular,
it cannot go back in the past of $S$ since $S$ is acausal, and thus, does not reach anymore $S_-$. Therefore,
$S_-$ is acausal. Similarly, $S_+$ is acausal. It follows that
$S_\pm$ are both Cauchy surfaces for $(M, g_0)$.

For every $x$ in $I^+(S_-)$ and every past oriented $g_0$-causal tangent vector $v$,
the past oriented geodesic tangent to $(x,v)$ intersects $S$. The same property holds for
tangent vector $(x, v')$ nearby. It follows that there exists on $I^+(S_-)$ a smooth Lorentzian metric
$g'_1$ such that $g_0 \prec g'_1$ and such that every inextendible past oriented $g'_1$-causal
curve attains $S$. Furthermore, we can select $g'_1$ such that $S$ is $g'_1$-space-like,
and such that every future oriented $g'_1$-causal
vector tangent at a point of $S$ points in the $g_0$-future of $S$. It follows that future oriented $g'_1$-causal curves
crossing $S$ cannot come back to $S$: $S$ is acausal, not only for $g_0$, but
also for $g'_1$.

We can also define $g'_2$ in the past of $S_+$ so that
$g_0 \prec g'_2$, every inextendible future oriented $g'_2$-causal
curve attains $S$, and such that $S$ is $g'_2$-acausal. We can now interpolate in the common region
$I^+(S_-) \cap I^-(S_+)$, getting a Lorentzian metric $g'$ on the entire $M$
such that $g_0 \prec g' \prec g'_1$ on $I^+(S_-)$, and
$g_0 \prec g' \prec g'_2$ on $I^-(S_+)$. Observe that even if it is not totally
obvious that the metrics $g'_i$ can be selected continuous, we have enough room to
pick such a metric $g'$ in a continuous way.

Let $l$ be a future oriented $g'$-causal curve starting from a point in $S$. Since $g' \prec g'_1$, this
curve is also $g'_1$-causal as long as it remains inside $I^+(S_-)$. But since $S$ is acausal for
$g'_1$, it implies that $l$ cannot cross $S$ anymore: hence $l$ lies entirely in $I^+(S)$.
It follows that $S$ is acausal for $g'$.

By construction of $g'_1$, every past-oriented $g'_1$-causal curve starting from a point inside $I^+(S)$
must intersect $S$. Since $g' \prec g'_1$ the same property holds for $g'$-causal curves.
Using $g'_2$ for points in $I^+(S_-)$, we get that every inextendible $g'$-causal curve intersects
$S$. Hence, $(M, g')$ is globally hyperbolic. According to Geroch's Theorem in the regular case,
there is a time function $T: M \to \RR$ whose levels are Cauchy surfaces. The proposition
follows, since $g_0$-causal curves are $g'$-causal curves, implying that $g'$-Cauchy surfaces
are $g_0$-Cauchy surfaces and that $g'$-time functions are $g_0$-time functions.
\end{proof}

\begin{cor}
\label{cor.splitting}
If $(M, g_0)$ is globally hyperbolic, there is a decomposition $M \approx S \times \RR$
where every level $S \times \{ \ast \}$ is a Cauchy surface, and very vertical line
$\{ \ast \} \times \RR$ is a singular line or a time-like line.
\end{cor}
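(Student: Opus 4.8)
The plan is to upgrade the time function produced by Proposition~\ref{pro.ghtime} to an honest product splitting by flowing along a carefully chosen timelike vector field. Starting from the smooth time function $T:M\to\RR$ of Proposition~\ref{pro.ghtime}, whose levels $S_t=T^{-1}(t)$ are Cauchy surfaces, I would first construct a future-directed $g_0$-timelike vector field $X$ on $M$ with $dT(X)\equiv 1$ and which is tangent to the singular locus. Near each massive particle I would use the coordinates $(\zeta,t)$ of Section~\ref{sub.localcoord}: there $\partial_t$ is smooth for the differentiable structure on $M$, satisfies $g_0(\partial_t,\partial_t)=-1$, and is tangent to the singular line $\{\zeta=0\}$. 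Away from the singular locus I would take any smooth future-directed $g_0$-timelike field $V$ on $M^\ast$ and patch the two using a partition of unity subordinate to a tubular neighborhood of the singular locus and to $M^\ast$. Since the future timelike cone of $g_0$ is an open convex cone, any convex combination of future-timelike vectors is again future-timelike, so the patched field $X_0$ is future-timelike and equal to $\partial_t$ along the singular lines. Because $T$ is a time function, $dT(X_0)>0$ everywhere, and $X:=X_0/dT(X_0)$ is the desired normalized field.

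Next I would analyze the flow $\phi_s$ of $X$. From $dT(X)=1$ one gets $T(\phi_s(x))=T(x)+s$, so $T$ is an affine parameter along flow lines. The key step is completeness: a maximal integral curve of the smooth field $X$ has no endpoint in $M$ at either end of its interval of definition (otherwise smoothness of $X$ would let us extend it), hence it is an inextendible causal curve; since each $S_t$ is a Cauchy surface it must meet every $S_t$ exactly once, which forces its parameter interval to be all of $\RR$. Thus $\phi_s$ is complete.

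With completeness in hand, setting $S:=S_0=T^{-1}(0)$, I would show that $\Phi\colon S\times\RR\to M$, $\Phi(x,s)=\phi_s(x)$, is a diffeomorphism. Injectivity follows since $T\circ\Phi(x,s)=s$, combined with the group law of the flow; surjectivity follows because the inextendible flow line through any $p\in M$ crosses $S$, giving $p=\Phi(\phi_{-T(p)}(p),T(p))$; and the inverse $p\mapsto(\phi_{-T(p)}(p),T(p))$ is smooth. By construction each slice $S\times\{s\}$ is carried into $S_s$, a Cauchy surface. Finally, the vertical lines $\Phi(\{x\}\times\RR)$ are exactly the flow lines of $X$: along the singular locus $X$ is a positive multiple of $\partial_t$, hence tangent to the singular line, so by uniqueness of integral curves the flow line through a singular point is that singular line; dually, the flow line through a regular point can never meet the singular locus (again by uniqueness) and is therefore a time-like curve in $M^\ast$. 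This yields the stated dichotomy.

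I expect the main obstacle to be the completeness of the flow and, more subtly, arranging $X$ to be simultaneously timelike, tangent to the singular lines, and smooth across the singular locus for the differentiable structure of Section~\ref{sub.localcoord}. The convexity of the timelike cone makes the patching harmless, and global hyperbolicity -- precisely the Cauchy property of the levels $S_t$ -- is exactly what rules out incomplete integral curves.
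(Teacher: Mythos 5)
Your proof is correct and follows essentially the same route as the paper: both take the time function $T$ from Proposition~\ref{pro.ghtime}, build a future-directed time-like vector field tangent to the singular lines by patching a field near the singular locus with one on the regular part via a partition of unity, and use global hyperbolicity to guarantee that every flow orbit crosses the Cauchy surface $S$, yielding the product structure. Your normalization $dT(X)=1$ and the explicit completeness argument for the flow are just more detailed versions of what the paper leaves implicit in the phrase ``each of these orbits intersects every Cauchy surface.''
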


\begin{proof}
Let $T: M \to \RR$ be the time function provided by Proposition~\ref{pro.ghtime}.
Let $X$ be minus the gradient  (for $g_0$) of
$T$: it is a future oriented time-like vector field on $M^\ast$. Consider also a future oriented
time-like vector field $Y$ on a tubular neighborhood $U$ of the singular locus: using a partition
of unity as in the proof of Lemma~\ref{le.singstable}, we can construct a smooth time-like vector field
$Z = aY + bX$ on $M$ tangent to the singular lines. The orbits of the flow generated by $Z$
are time-like curves. The global hyperbolicity of $(M, g_0)$ ensures that each of these orbits
intersect every Cauchy surface, in particular, the level sets of $T$. In other words, for every $x$ in $M$
the $Z$-orbit of $x$ intersects $S$ at a point $p(x)$. Then the map
$F: M \to S \times \RR$ defined by $F(x) = (p(x), T(x))$ is the desired diffeomorphism
between $M$ and $S \times \RR$.
\end{proof}

\subsection{Maximal globally hyperbolic extensions}

From now we assume that $M$ is globally hyperbolic, admitting a compact Cauchy surface $S$.
In this section, we prove the following facts, well-known in the case of regular globally hyperbolic solutions
to the Einstein equation (\cite{gerochdependence}): \textit{there exists a maximal extension, which is unique up to isometry.}

\begin{defi} An isometric embedding $i: (M, S) \to (M', S')$ is a Cauchy embedding if
$S'=i(S)$ is a Cauchy surface of $M'$.
\end{defi}

\begin{remark}
If $i: M \to M'$ is a Cauchy embedding then the image $i(S')$ of any Cauchy surface $S'$
of $M$ is also a Cauchy surface in $M'$. Indeed, for every inextendible causal curve $l$
in $M'$, every connected component of the preimage $i^{-1}(l)$ is an inextendible causal
curve in $M$, and thus intersects $S$. Since $l$ intersects $i(S)$ in exactly one point,
$i^{-1}(l)$ is connected. It follows that the intersection $l \cap i(S')$ is non-empty and reduced to a single point: $i(S')$ is a Cauchy surface.

Therefore, we can define Cauchy embeddings without reference to the selected Cauchy surface
$S$. However, the natural category is the category of \textit{marked} globally hyperbolic
spacetimes, i.e. pairs $(M, S)$.
\end{remark}

\begin{lemma}
\label{le.coincide}
Let $i_1: (M, S) \to (M', S')$, $i_2: (M, S) \to (M', S')$ two Cauchy embeddings into the same marked globally hyperbolic singular AdS spacetime $(M', S')$. Assume that
$i_1$ and $i_2$ coincide on $S$. Then, they coincide on the entire $M$.
\end{lemma}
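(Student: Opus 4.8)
The plan is to reduce everything to the regular part $M^\ast$, where the structure is a genuine $(\isom_0(\AdS_3),\AdS_3)$-structure, and to exploit the rigidity of such structures: an isometry of a connected AdS manifold is determined by its $1$-jet (value and differential) at a single point, since $\AdS_3$ is a homogeneous Lorentzian space on which an element of $\isom_0(\AdS_3)$ is determined by its value and differential at any one point. Granting this, the set of points of $M^\ast$ at which the $1$-jets of $i_1$ and $i_2$ agree is both open (two isometries with the same $1$-jet at a point coincide, together with their differentials, on a whole $(G,X)$-chart) and closed (the condition is closed under the continuous maps $i_1,i_2$ and their differentials). Since $M^\ast$ is the complement in $M$ of finitely many pairwise disjoint singular lines, hence connected, it will suffice to produce a single point of $M^\ast$ where the $1$-jets coincide; in fact I will produce a whole connected piece, namely $S\cap M^\ast$.

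The first step is therefore to check that $i_1$ and $i_2$ have equal $1$-jets all along $S$. By hypothesis $i_1|_S=i_2|_S$, so their tangent maps agree on $TS$. At a regular point $x\in S\cap M^\ast$, write $T_xM=T_xS\oplus\RR\nu$ with $\nu$ the future-pointing unit normal to the space-like surface $S$. Because $i_1$ and $i_2$ are isometries preserving time orientation, each of $di_1(\nu)$ and $di_2(\nu)$ is a future-pointing unit normal to $S'=i_1(S)=i_2(S)$ at the common image point; uniqueness of the future unit normal to a space-like hypersurface forces $di_1(\nu)=di_2(\nu)$. Hence $di_1=di_2$ at every point of the dense subset $S\cap M^\ast$, and by continuity on all of $S$. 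Thus the open-and-closed set of the previous paragraph contains $S\cap M^\ast$, which is $S$ minus the finitely many points where singular lines cross it, hence nonempty and connected.

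By connectedness of $M^\ast$ this set is all of $M^\ast$, so $i_1=i_2$ on $M^\ast$. Finally, since the singular locus is a union of lines, $M^\ast$ is dense in $M$, and $i_1,i_2$ are continuous on $M$ for the differentiable structure described in Section~\ref{sub.localcoord}; therefore $i_1=i_2$ on the closure $\overline{M^\ast}=M$, which is the claim.

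The main obstacle I expect is the interaction between the rigidity input and the singular lines: one must be certain that the open/closed dichotomy is carried out legitimately on the regular part alone, and that nothing is lost at the singular lines. The points to verify carefully are that $S\cap M^\ast$ is genuinely connected (so that propagation from it reaches all of $M^\ast$) and that the passage across the singular lines is merely continuity — using that an isometric embedding of singular AdS spacetimes maps the singular locus to the singular locus and is continuous for the smooth atlas of Section~\ref{sub.localcoord} — rather than requiring any separate rigidity statement at the singular lines themselves.
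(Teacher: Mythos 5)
Your proof is correct, but it follows a genuinely different route from the paper's. The paper argues in two steps: first, a causality argument — two points of $M'$ sufficiently close to the Cauchy surface $S'$ and in its future are equal if and only if their chronological pasts cut $S'$ in the same set, which forces $i_1=i_2$ on a whole neighborhood of $S$; second, propagation by the exponential map — any $x\in M$ (even a singular one) is reached from some $y\in S$ by a time-like geodesic $[y,x]$ with $[y,x[$ contained in $M^\ast$, so $x=\exp_y(\xi)$ and $i_1(x)=\exp(d_yi_1(\xi))=\exp(d_yi_2(\xi))=i_2(x)$. You instead run the standard rigidity/analytic-continuation argument for $(G,X)$-structures: agreement of $1$-jets along $S\cap M^\ast$ (via the future unit normal), openness and closedness of the jet-agreement set in the connected regular part $M^\ast$, then density of $M^\ast$ and continuity to cross the singular lines. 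What your approach buys: it never uses that $S'$ is a Cauchy surface or any global causality — only that $S$ is a space-like surface on which the two structure-preserving embeddings agree — and it avoids the paper's (briefly justified) claim that every point of $M$ can be joined to $S$ by a time-like geodesic staying in $M^\ast$ until its endpoint, replacing it by the much softer facts that $M^\ast$ is connected and dense. What the paper's approach buys: it treats singular points directly, via limits along geodesics, without invoking the $(G,X)$-chart rigidity machinery, and its first step needs no discussion of differentials at all. Note that both arguments rest on the embeddings being morphisms of $(\isom_0(\AdS_3),\AdS_3)$-structures, hence time-orientation preserving: you use this to identify the images of the future unit normal, the paper uses it for the equality $I^-(i_1(x))\cap S'=I^-(i_2(x))\cap S'$. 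Finally, a small economy in your write-up: connectedness of $S\cap M^\ast$ is not actually needed — as you yourself observe, a single point of jet agreement suffices once the open-closed argument is in place.
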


\begin{proof}
If $x'$, $y'$ are points in $M'$ sufficiently near to $S'$, say, in the future of $S'$,
then they are equal if and only if the intersections $I^-(x') \cap S'$ and
$I^-(y') \cap S'$ are equal. Apply this observation to $i_1(x)$, $i_2(x)$ for $x$
near $S$: we obtain that $i_1$, $i_2$ coincide in a neighborhood of $S$.

Let now $x$ be any point in $M$.
Since there is only a finite number of singular lines
in $M$, there is a time-like geodesic segment $[y, x]$, where $y$ lies in $S$,
and such that $[y, x[$ is contained in $M^\ast$ ($x$ may be singular).
Then $x$ is the image by the exponential map of some $\xi$ in $T_yM$.
Then $i_1(x)$, $i_2(x)$ are the image by the exponential map of respectively
$d_yi_1(\xi)$, $d_yi_2(\xi)$. But these tangent vectors are equal, since $i_1 = i_2$
near $S$.
\end{proof}

\begin{lemma}
\label{le.causalconvex}
Let $i: M \to M'$ be a Cauchy embedding into a singular AdS spacetime. Then, the image of $i$ is
causally convex, i.e. any causal curve in $M'$ admitting extremities
in $i(M)$ lies inside $i(M)$.
\end{lemma}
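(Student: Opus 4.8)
The plan is to foliate $i(M)$ by the images of a whole family of Cauchy surfaces of $M$, and then show that a causal curve of $M'$ with endpoints in $i(M)$ is forced, Cauchy surface by Cauchy surface, to remain inside this foliation.

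First I would set up the global structure. Since $M$ is globally hyperbolic with compact Cauchy surface $S$, Corollary~\ref{cor.splitting} provides a splitting $M\approx S\times\R$ in which every slice $S_c:=S\times\{c\}$ is a Cauchy surface of $M$. By the remark following the definition of a Cauchy embedding, the image $\Sigma_c:=i(S_c)$ of each such surface is again a Cauchy surface of $M'$. The $\Sigma_c$ are pairwise disjoint, they are causally ordered (one has $\Sigma_c\subset I^-(\Sigma_{c'})$ for $c<c'$, because the vertical lines of the splitting are time-like, possibly singular, curves and $i$ maps time-like curves to time-like curves), and their union is exactly $i(M)$. Since $M'$ is globally hyperbolic, hence chronological, each $\Sigma_c$ is acausal and edgeless, so $M'=I^-(\Sigma_c)\sqcup\Sigma_c\sqcup I^+(\Sigma_c)$, $J^\pm(\Sigma_c)=I^\pm(\Sigma_c)\sqcup\Sigma_c$, and $J^+(\Sigma_c)\cap J^-(\Sigma_c)=\Sigma_c$ (two distinct points of an acausal set cannot be joined by a causal curve of a chronological spacetime).

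Next, let $\gamma:[0,1]\to M'$ be a future-directed causal curve with $a=\gamma(0)$, $b=\gamma(1)$ in $i(M)$, say $a\in\Sigma_{c_a}$ and $b\in\Sigma_{c_b}$. Because the $\Sigma_c$ are acausal and ordered, a non-trivial future-directed $\gamma$ forces $c_a<c_b$. For an interior point $x=\gamma(s)$, $0<s<1$, one has $x\in J^+(a)\cap J^-(b)\subset J^+(\Sigma_{c_a})\cap J^-(\Sigma_{c_b})$; moreover $x\neq a,b$ rules out $x\in\Sigma_{c_a}\cup\Sigma_{c_b}$ (a point of $\Sigma_{c_a}$ causally related to $a\in\Sigma_{c_a}$ would equal $a$), so in fact $x\in I^+(\Sigma_{c_a})\cap I^-(\Sigma_{c_b})$. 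It therefore suffices to prove the local statement: any $x\in I^+(\Sigma_{c})\cap I^-(\Sigma_{c'})$ with $c<c'$ lies in $i(M)$. To prove this I would \emph{sandwich} $x$ between the intermediate surfaces. Assuming $x\notin i(M)$, we have $x\notin\Sigma_d$ for all $d$, so the trichotomy sorts each $d\in[c,c']$ into $A=\{d:x\in I^+(\Sigma_d)\}$ or $B=\{d:x\in I^-(\Sigma_d)\}$; the ordering makes $A$ a lower set and $B$ an upper set, with $c\in A$ and $c'\in B$. Setting $d^*=\sup A=\inf B$, the point $x$ lies in $I^+(\Sigma_d)$ for $d<d^*$ and in $I^-(\Sigma_d)$ for $d>d^*$; passing to the limit $d\to d^*$ from each side yields $x\in J^+(\Sigma_{d^*})\cap J^-(\Sigma_{d^*})=\Sigma_{d^*}\subset i(M)$, a contradiction. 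Running this over all interior points gives $\gamma\subset i(M)$.

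The main obstacle is precisely this limiting step: deducing $x\in J^\pm(\Sigma_{d^*})$ from $x\in I^\pm(\Sigma_d)$ for $d$ on either side of $d^*$. This is where the compactness of $S$ (hence of every $\Sigma_c$) and the global hyperbolicity of $M'$ are essential. One takes time-like curves from points $p_d\in\Sigma_d$ to $x$, uses compactness of the $\Sigma_d$ near $d^*$, together with the continuity of the foliation $d\mapsto\Sigma_d$, to extract a convergent sequence of initial points with limit in $\Sigma_{d^*}$, and then applies the limit curve theorem in the globally hyperbolic $M'$ to produce a causal curve from $\Sigma_{d^*}$ to $x$. The continuity of the foliation and the closedness of the causal relation $J^+$ are the genuine technical facts that make the sandwiching rigorous; everything else is formal bookkeeping with Cauchy surfaces.
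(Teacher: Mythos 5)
Your strategy --- foliating $i(M)$ by the Cauchy surfaces $\Sigma_c=i(S\times\{c\})$ coming from Corollary~\ref{cor.splitting} and sandwiching a hypothetical exterior point between them --- is genuinely different from the paper's argument, and most of your bookkeeping (the ordering of the $\Sigma_c$, the trichotomy, the lower-set/upper-set structure of $A$ and $B$) is sound. But the step you yourself identify as the crux is a genuine gap: to pass from $x\in I^+(\Sigma_d)$ for all $d<d^*$ to $x\in J^+(\Sigma_{d^*})$ you invoke ``the limit curve theorem in the globally hyperbolic $M'$'' and ``closedness of the causal relation $J^+$''. These are theorems about \emph{smooth} Lorentzian manifolds, whereas $M'$ is a singular AdS spacetime (the lemma does not even restrict its singularities to massive particles). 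The entire point of Section~\ref{sc:hyperbolicity} is that classical causality theory does not transfer automatically to the singular setting: time functions and the Geroch splitting are recovered there only via the domination trick $g_0\prec g'$ (Lemma~\ref{le.singstable}, Proposition~\ref{pro.ghtime}). That trick does \emph{not} yield closedness of $J^+$ or a limit-curve theorem for $g_0$, because domination gives a one-way implication: every $g_0$-causal curve is $g'$-causal, but the limit curve produced by the smooth theory for $g'$ has no reason to be $g_0$-causal; near a singular line the $g_0$-cones degenerate to half-lines in the smooth atlas of section~\ref{sub.localcoord}, and controlling limits of $g_0$-causal curves there requires a separate argument that neither you nor the paper supplies. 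So the weight of your proof rests on exactly the kind of statement that, in this context, needs proof.

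The gap is avoidable inside your own framework, and the repair is instructive because it is the same device the paper uses: \emph{openness of $i(M)$ in $M'$}. Work with $J^\pm$ instead of $I^\pm$ (trichotomy then needs only acausality of each $\Sigma_d$ and the Cauchy property). Since $x\notin i(M)$, the cut value $d^*$ itself lies in $A$ or in $B$; say there is a causal curve $\gamma$ from a point $p\in\Sigma_{d^*}$ to $x$. Because $i(M)$ is open, an initial arc of $\gamma$ stays inside $i(M)\approx S\times\R$, where the second coordinate of the splitting is a time function, strictly increasing along causal curves; hence $\gamma$ crosses some $\Sigma_d$ with $d>d^*$, so $d\in A$, contradicting that everything above $d^*$ lies in $B$ (the case $d^*\in B$ is symmetric). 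No limits, no compactness of $S$, no limit-curve machinery. For comparison, the paper's proof is shorter still and uses a single Cauchy surface: extend $l$ to an inextendible causal curve $\hat l$ in $M'$; every connected component of $\hat l\cap i(M)$ is an inextendible causal curve of $i(M)\cong M$ (openness again), hence meets $i(S)$; but $\hat l$ meets the acausal surface $i(S)$ at most once, so there is exactly one component, and it contains both endpoints of $l$, hence all of $l$. Your approach, once repaired, needs the full splitting of Corollary~\ref{cor.splitting}; the paper's needs only the definitions of Cauchy surface and Cauchy embedding.
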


\begin{proof}
Let $l$ be a causal segment in $M'$ with extremities in $i(M)$. We extend it as an inextendible causal curve $\hat{l}$.
Let $l'$ be a connected component of $\hat{l} \cap i(M)$: it is an inextendible causal curve
inside $i(M)$. Thus, its intersection with $i(S)$ is non-empty. But $\hat{l} \cap i(S)$ contains at most one point: it follows that $\hat{l} \cap i(M)$ admits only one connected component, which contains $l$.
\end {proof}

\begin{cor}
\label{cor.bordgh}
The boundary of the image of a Cauchy embedding $i: M \to M'$
is the union of two closed edgeless achronal subsets $S^+$, $S^-$ of $M'$,
and $i(M)$ is the intersection between the past of $S^+$ and the future of $S^-$.
\end{cor}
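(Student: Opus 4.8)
The plan is to realise $i(M)$ as the region lying between the graphs of two functions along the time-like flow of $M'$, and to identify $S^+$ and $S^-$ with these two graphs. Since $i$ is a Cauchy embedding, $\Sigma':=i(S)$ is a compact Cauchy surface of $M'$, so $M'$ is itself globally hyperbolic and Proposition~\ref{pro.ghtime} and Corollary~\ref{cor.splitting} apply to it: there is a time function $T':M'\to\RR$ with $\Sigma'=T'^{-1}(0)$, all of whose levels are Cauchy surfaces, together with an identification $M'\approx\Sigma'\times\RR$ whose vertical lines $\gamma_p=\{p\}\times\RR$ are inextendible time-like or singular curves. Each $\gamma_p$ is causal and meets $\Sigma'\subseteq i(M)$, and by Lemma~\ref{le.causalconvex} the open set $i(M)$ is causally convex; hence $\gamma_p\cap i(M)$ is a non-empty connected relatively open arc, which I write as $\{p\}\times(f^-(p),f^+(p))$ with $f^-(p)\in[-\infty,0)$ and $f^+(p)\in(0,+\infty]$. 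Thus $i(M)=\{(p,t):f^-(p)<t<f^+(p)\}$, and $M'=I^-(\Sigma')\sqcup\Sigma'\sqcup I^+(\Sigma')$.

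I would then set $S^+=\partial i(M)\cap I^+(\Sigma')$ and $S^-=\partial i(M)\cap I^-(\Sigma')$. Because $\Sigma'\subseteq i(M)$ is open, $\partial i(M)$ is disjoint from $\Sigma'$, so $\partial i(M)=S^+\sqcup S^-$. The key structural claim is that $S^+$ is exactly the graph $\{(p,f^+(p)):f^+(p)<+\infty\}$ (and symmetrically for $S^-$): if some boundary point $(p,t^*)$ had $t^*>f^+(p)$, then, choosing an interior point $y'=(p,f^+(p)-\eta)$, an exterior point $y^+=(p,f^+(p)+s)$ with $f^+(p)<f^+(p)+s<t^*$, and interior points $x_n\to(p,t^*)$, the concatenation of the flow segment from $y'$ to $y^+$ with a time-like curve from $y^+$ to $x_n$ (which exists for large $n$, since $y^+\in I^-((p,t^*))$ and $I^-$ is open) is a causal curve with both endpoints in $i(M)$ passing through $y^+\notin i(M)$, contradicting causal convexity. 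The same construction, applied to a hypothetical pair $x,y\in S^+$ with $y\in I^+(x)$, produces a causal curve joining two interior points of $i(M)$ through a point just above $x$ on $\gamma_p$ that lies outside $i(M)$; this proves that $S^+$, and symmetrically $S^-$, is achronal. Closedness of $S^\pm$ in $M'$ follows because a limit of points of $S^+$ lies in $\partial i(M)\cap\overline{I^+(\Sigma')}$ and cannot lie on $\Sigma'$.

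For edgelessness I would argue that on the regular part $M^\ast$ the achronal surface $S^+$ is, in the coordinates of Section~\ref{sub.localcoord}, the graph of a $1$-Lipschitz (weighted near the singular lines) function over a domain of $\Sigma'$, and that such a closed achronal graph is edgeless because any causal curve running from $I^-(x)$ to $I^+(x)$ in a small coordinate box must cross it. The \emph{main obstacle} is to make this rigorous across the massive-particle worldlines, where the metric is singular and the usual ``edgeless achronal $=$ Lipschitz graph'' theory does not apply directly: here I would first establish the statement on $S^+\cap M^\ast$ and then invoke the lemma asserting that the closure in $M$ of a closed edgeless achronal subset of $M^\ast$ is again closed edgeless achronal, thereby transferring the property across the singular lines.

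Finally, to obtain $i(M)=I^-(S^+)\cap I^+(S^-)$, the inclusion $\subseteq$ is immediate by flowing a point $(p,t)\in i(M)$ forward to $(p,f^+(p))\in S^+$ and backward to $(p,f^-(p))\in S^-$ along the time-like line $\gamma_p$, with the convention that a flow line with $f^+\equiv+\infty$ (resp. $f^-\equiv-\infty$), and in particular an empty $S^+$ (resp. $S^-$), imposes no future (resp. past) constraint. For the reverse inclusion, if $x\in I^-(S^+)\cap I^+(S^-)$, pick $s^+\in S^+$ and $s^-\in S^-$ with $s^-\in I^-(x)$ and $x\in I^-(s^+)$; pushing $s^\pm$ slightly into $i(M)$ along their flow lines to interior points $s^\pm_\eta$, which still satisfy $s^-_\eta\in I^-(x)$ and $x\in I^-(s^+_\eta)$ by openness of $I^\pm$, one gets two points of $i(M)$ with $x$ lying on a time-like curve between them, whence $x\in i(M)$ by causal convexity. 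This yields precisely the description of $i(M)$ as the intersection of the past of $S^+$ and the future of $S^-$.
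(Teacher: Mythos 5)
Your overall strategy---realizing $i(M)$, via Proposition~\ref{pro.ghtime} and Corollary~\ref{cor.splitting} applied to $M'$, as the region between two graphs over $\Sigma'=i(S)$, and then exploiting causal convexity (Lemma~\ref{le.causalconvex})---is a legitimate way of carrying out what the paper dismisses as ``straightforward'', and your $S^\pm$ coincide with the paper's (the boundary points whose past, resp.\ future, meets $i(M)$). The identification of $S^+$ with the graph of $f^+$, its achronality and its closedness are proved correctly. The edgelessness step, which you flag as the main obstacle, can in fact be closed within your own framework and without any weighted-Lipschitz theory: your observations that points strictly below the graph are interior and points strictly above are exterior give openness of the finiteness domain of $f^+$ and continuity of $f^+$ on it, so $S^+$ separates a small product neighborhood of each of its points into an upper and a lower part, and the same causal-convexity trick shows that $I^-(x)\cap U$ (resp.\ $I^+(x)\cap U$) lies strictly below (resp.\ above) the graph; any causal curve from one side to the other must then cross $S^+$.

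The genuine gap is in the final equality $i(M)=I^-(S^+)\cap I^+(S^-)$, in the forward inclusion. Your argument sends $(p,t)\in i(M)$ to $(p,f^+(p))$ along its flow line, and your ``convention'' exempts the flow lines with $f^+(p)=+\infty$. But the statement admits no such exemption when $S^+\neq\emptyset$: it asserts that \emph{every} point of $i(M)$ lies in $I^-(S^+)$. The mixed case---$S^+\neq\emptyset$ while $f^+=+\infty$ on some flow lines---is excluded by nothing you prove, and it does occur: for instance, inside a Fuchsian spacetime written conformally as $(S\times\R,\,-d\tau^2+g_{hyp})$, take $M'=\{\tau<\epsilon+d(x,a)\}$ and $i(M)=M'\setminus J^+(q)$ with $q$ far from $a$; then $\partial i(M)=\partial J^+(q)$ is a graph over only part of $\Sigma'$, and for a natural splitting the vertical lines over points near $a$ never meet it, although $S^+\neq\emptyset$. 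In such examples the stated equality can still hold, but only because points on the exempted lines reach $S^+$ along \emph{slanted} timelike curves ending at points of $S^+$ whose time-function values are unbounded (in the mixed case $f^+$ blows up along the frontier of its finiteness domain); nothing in your proof produces these curves, and producing them is the real content of this inclusion. Put differently, what your vertical-line argument actually establishes is $i(M)=M'\setminus\bigl(J^+(S^+)\cup J^-(S^-)\bigr)$, and the missing step is precisely that this set equals $I^-(S^+)\cap I^+(S^-)$ (with the empty-set convention); that is not a matter of convention but a further causal-structure argument which your proposal does not supply.
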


Each of $S^+$, $S^-$ might be empty, and is not necessarily connected.

\begin{proof}
This is a general property of causally convex open subsets: $S^+$ (resp. $S^-$) is the set of elements in the boundary of $i(M)$ whose past (resp. future) intersects $i(M)$. The proof is
straightforward and left to the reader.
\end{proof}

\begin{defi}
$(M, S)$ is maximal if every Cauchy embedding $i: M \to M'$ into a singular AdS spacetime is onto, i.e. an isometric homeomorphism.
\end{defi}

\begin{prop} \label{pr:extension1}
$(M, S)$ admits a maximal singular AdS extension, i.e. a Cauchy embedding into
a maximal globally hyperbolic singular AdS spacetime $(\widehat{M}, \hat{S})$ without interaction.
\end{prop}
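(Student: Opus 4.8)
The plan is to adapt Geroch's construction of the maximal globally hyperbolic development to the singular AdS setting, via Zorn's lemma applied to the partially ordered set of globally hyperbolic extensions, the upper bounds of chains being produced as direct limits. Throughout I would work in the category $\mathcal E$ whose objects are marked globally hyperbolic singular AdS spacetimes $(N,i)$ \emph{without interaction} (singularities only massive particles) equipped with a Cauchy embedding $i\colon (M,S)\to N$, ordered by $(N_1,i_1)\preceq(N_2,i_2)$ if and only if there is a Cauchy embedding $\phi\colon N_1\to N_2$ with $\phi\circ i_1=i_2$. The first point is to settle the set-theoretic issue: by Corollary~\ref{cor.splitting} every object of $\mathcal E$ is diffeomorphic to $S\times\RR$ with $i(S)$ a level $S\times\{*\}$, so up to isomorphism each extension is carried by the \emph{fixed} manifold $S\times\RR$ and is determined by a singular AdS metric on it. Hence $\mathcal E$ is (modulo isomorphism) a genuine set, and it is nonempty since $(M,\operatorname{id})$ lies in it.

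The structural input making direct limits work is the rigidity Lemma~\ref{le.coincide}: a comparison map $\phi$ realizing $\preceq$ is unique when it exists. Thus for a chain $\mathcal C\subseteq\mathcal E$ the comparison maps form a coherent direct system (the relations $\phi_{\beta\gamma}\circ\phi_{\alpha\beta}=\phi_{\alpha\gamma}$ hold automatically by uniqueness), so one may form $\widehat N=\varinjlim_{\mathcal C}N_\alpha$, into which each $N_\alpha$ maps by an open embedding. That $\widehat N$ inherits a singular AdS structure with only massive particles is local, read off from the $N_\alpha$. Hausdorffness follows from injectivity of the maps: two distinct points of $\widehat N$ lie in a common $N_\alpha$ (the chain is totally ordered), where they are separated by open sets that remain open in $\widehat N$.

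The heart of the argument, and the step I expect to be the main obstacle, is to show that $\widehat N$ is globally hyperbolic with $\hat S:=i(S)$ as Cauchy surface, i.e.\ that each canonical map $N_\alpha\to\widehat N$ is a Cauchy embedding. First I would show each $N_\alpha$ is causally convex in $\widehat N$: a causal segment with endpoints in $N_\alpha$ is compact, hence contained in some $N_\beta\supseteq N_\alpha$ of the chain, where causal convexity of the Cauchy embedding $N_\alpha\to N_\beta$ (Lemma~\ref{le.causalconvex}) confines it to $N_\alpha$. Acausality of $\hat S$ then follows, since a causal curve meeting $\hat S$ twice has both endpoints in every $N_\alpha\supseteq\hat S$, so by causal convexity lies in $N_\alpha$, contradicting acausality of the Cauchy surface there. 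For the Cauchy property, given an inextendible causal curve $c$ and a point $p=c(t_0)\in N_{\alpha_0}$: if $p$ is in the past of $\hat S$ inside $N_{\alpha_0}$, then following $c$ to the future it cannot reach the future boundary component $S^+$ of $N_{\alpha_0}$ (described by Corollary~\ref{cor.bordgh}) without first crossing $\hat S$, because $\hat S$ separates $N_{\alpha_0}$ from $S^+$; the symmetric argument treats $p$ in the future of $\hat S$, and the case $c\subseteq N_{\alpha_0}$ is immediate. Hence $c$ meets $\hat S$, and by acausality exactly once, so $\hat S$ is a Cauchy surface of $\widehat N$ and $\widehat N\in\mathcal E$ is an upper bound for $\mathcal C$.

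Zorn's lemma then yields a maximal element $(\widehat M,\hat S)$ of $\mathcal E$. By construction it is a globally hyperbolic singular AdS extension of $(M,S)$ without interaction, and its $\preceq$-maximality is exactly the required maximality: any Cauchy embedding of $\widehat M$ into a singular AdS spacetime without interaction is an isomorphism onto its image, for otherwise it would produce a strictly larger object of $\mathcal E$. The behaviour of Cauchy extensions that \emph{create} new interactions is not controlled by this maximality, which is precisely the source of the non-uniqueness discussed around Proposition~\ref{pr:extension2}.
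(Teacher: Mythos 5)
Your proof is correct and takes essentially the same route as the paper: Zorn's lemma applied to the partially ordered set of Cauchy embeddings up to isometry, with Lemma~\ref{le.coincide} supplying the rigidity needed both for the order structure and for building upper bounds of chains as direct limits. The paper's own proof is in fact sketchier--it explicitly leaves to the reader the verification that the limit of a chain is a globally hyperbolic singular AdS spacetime--so your additional checks (the set-theoretic point via Corollary~\ref{cor.splitting}, Hausdorffness, causal convexity via Lemma~\ref{le.causalconvex}, and the Cauchy property of $\hat S$) simply fill in what the paper calls a ``straightforward verification''.
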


\begin{proof}
Let $\mathcal M$ be set of Cauchy embeddings $i: (M, S) \to (M', S')$.
We define on $\mathcal M$ the relation $(i_1, M_1, S_1) \preceq (i_2, M_2, S_2)$ if
there is a Cauchy embedding $i: (M_1, S_1) \to (M_2, S_2)$ such that $i_2 = i \circ i_1$.
It defines a preorder on $\mathcal M$. Let $\overline{\mathcal M}$ be the space
of Cauchy embeddings up to isometry, i.e. the quotient space of the equivalence relation
identifying $(i_1, M_1, S_1)$ and $(i_2, M_2, S_2)$ if
there is an isometric homeomorphism $i: (M_1, S_1) \to (M_2, S_2)$ such that
$i_2 = i \circ i_1$. Then $\preceq$ induces on $\overline{\mathcal M}$ a preorder relation, that we still denote by $\preceq$. Lemma~\ref{le.coincide} ensures
that $\preceq$ is a partial order (if $(i_1, M_1, S_1) \preceq (i_2, M_2, S_2)$
and $(i_2, M_2, S_2) \preceq (i_1, M_1, S_1)$, then $M_1$ and $M_2$ are isometric
and represent the same element of $\overline{\mathcal M})$.
Now, any totally ordered subset
$A$ of $\overline{\mathcal M}$ admits an upper bound in $A$: the inverse limit
of (representants of) the elements of $A$. By Zorn Lemma, we obtain that $\overline{M}$ contains a maximal
element. Any representant in $\overline{\mathcal M})$ of this maximal element is a maximal extension
of $(M, S)$.
\end{proof}

\begin{remark}
The proof above is sketchy: for example, we did not justify the fact that the inverse
limit is naturally a singular AdS spacetime. This is however a straightforward verification,
the same as in the classical situation, and is left to the reader.
\end{remark}

\begin{prop} \label{pr:extension2}
The maximal extension of $(M, S)$ is unique up to isometry.
\end{prop}

\begin{proof}
Let $(\widehat{M}_1, S_1)$, $(\widehat{M}_2, S_2)$ be two maximal extensions of $(M,S)$.
Consider the set of globally hyperbolic singular AdS spacetimes $(M', S')$ for which there
is a commutative diagram as below, where arrows are Cauchy embeddings.

\begin{center}
$$ \xymatrix{
       &  & (\widehat{M}_1, S_1) \\
    (M, S) \ar[r] \ar[rru] \ar[rrd] & (M', S') \ar[ru] \ar[rd] & \\
       &  & (\widehat{M}_2, S_2)
  } $$
\end{center}

Reasoning as in the previous proposition, we get that this set admits a maximal element:
there is a marked extension $(M', S')$ of $(M,S)$, and Cauchy embeddings $\varphi_i: M' \to \widehat{M}_i$ which cannot be simultaneously extended.

Define $\widehat{M}$ as the union of $(\widehat{M}_1, S_1)$ and $(\widehat{M}_2, S_2)$,
identified along their respective embedded copies of $(M', S')$, through $\varphi:=\varphi_2 \circ \varphi_1^{-1}$, equipped with the quotient topology. The key point is to prove that
$\widehat{M}$ is Hausdorff. Assume not: there is a point $x_1$ in $\widehat{M}_1$,
a point $x_2$ in $\widehat{M}_2$, and a sequence $y_n$ in $M'$ such that $\varphi_i(y_n)$
converges to $x_i$, but such that $x_1$ and $x_2$ do not represent the same
element of $\widehat{M}$. It means that $y_n$ does not converge in $M'$, and that
$x_i$ is not in the image of $\varphi_i$. Let $U_i$ be small neighborhoods in $\widehat{M}_i$ of $x_i$.

Denote by $S^+_i$, $S^-_i$ the upper and lower boundaries of $\varphi_i(M')$ in $\widehat{M}_i$ (cf.
Corollary~\ref{cor.bordgh}). Up to time reversal, we can assume that $x_1$ lies
in $S^+_1$: it implies that all the $\varphi_1(y_n)$ lies in $I^-(S^+_1)$, and that,
if $U_1$ is small enough, $U_1 \cap I^-(x_1)$ is contained in $\varphi_1(M')$. It is an open subset, hence $\varphi$ extends to some AdS isometry $\overline{\varphi}$ between $U_1$ and $U_2$ (reducing the $U_i$ if necessary). Therefore, every $\varphi_i$ can be extended
to isometric embeddings $\overline{\varphi}_i$ of a spacetime $M''$ containing $M'$, so that
$$\overline{\varphi}_2 = \overline{\varphi} \circ \overline{\varphi}_1$$

We intend to prove that $x_i$ and $U_i$ can be chosen such that $S_i$ is a Cauchy surface in $\overline{\varphi}_i(M'') \cup U_i$. Consider past oriented causal curves, starting from $x_1$, and contained in $S^+_1$. They are partially ordered by the inclusion. According to Zorn lemma,
there is a maximal causal curve $l_1$ satisfying all these properties.
Since $S^+_1$ is disjoint from $S_1$, and since every inextendible causal curve crosses
$S$, the curve $l_1$ is not inextendible: it has a final endpoint $y_1$ belonging
to $S^+_1$ (since $S^+_1$ is closed).
Therefore, any past oriented causal curve starting from $y_1$ is disjoint from $S^+_1$
(except at the starting point $y_1$).

We have seen that $\varphi$ can be extended over in a neighborhood of $x_1$: this extension
maps the initial part of $l_1$ onto a causal curve in $\widehat{M}_2$ starting from $x_2$ and contained in $S^+_2$. By compactness of $l_1$, this extension can be performed along the entire $l_1$, and the image is a causal curve admitting a final point $y_2$ in $S^+_2$. The points
$y_1$ and $y_2$ are not separated one from the other by the topology of $\widehat{M}$.
Replacing $x_i$ by $y_i$, we can thus assume that \textit{every past oriented causal curve
starting from $x_i$ is contained in $I^-(S^+_i)$.} It follows that, once more reducing
$U_i$ if necessary, inextendible past oriented causal curves starting from points in $U_i$
and in the future of $S^+_i$ intersects $S^+_i$ before escaping from $U_i$. In other words,
inextendible past oriented causal curves in $U_i \cup I^-(S^+_i)$ are also
inextendible causal curves in $\widehat{M}_i$, and therefore, intersect $S_i$.
As required, $S_i$ is a Cauchy surface in $U_i \cup \overline{\varphi_i}(M')$.

Hence, there is a Cauchy embedding of $(M, S)$ into some globally hyperbolic spacetime
$(M'', S'')$, and Cauchy embeddings
$\overline{\varphi}_i: (M'', S'') \to \varphi_i(M') \cup U_i$, which are related by
some isometry $\overline{\varphi}: \varphi_1(M') \cup U_1 \to \varphi_2(M') \cup U_2$:
$$\overline{\varphi}_2 = \overline{\varphi} \circ \overline{\varphi}_1$$

It is a contradiction with the maximality of $(M', S')$. Hence, we have proved that
$\widehat{M}$ is Hausdorff. It is a manifold, and the singular AdS metrics on $\widehat{M}_1$, $\widehat{M}_2$ induce a singular AdS metric on $\widehat{M}$.
Observe that $S_1$ and $S_2$ projects in $\widehat{M}$ onto the same space-like surface
$\widehat{S}$. Let $l$ be any inextendible curve in $\widehat{M}$. Without loss of generality, we can assume that $l$ intersects the projection $W_1$ of $\widehat{M}_1$ in $\widehat{M}$.
Then every connected component of $l \cap W_1$ is an inextendible causal curve in
$W_1 \approx \widehat{M}_1$. It follows that $l$ intersects $\widehat{S}$. Finally,
if some causal curve links two points in $\widehat{S}$, then it must be contained
in $W_1$ since globally hyperbolic open subsets are causally convex. It would contradict
the acausality of $S_1$ inside $\widehat{M}_1$.

The conclusion is that $\widehat{M}$ is globally hyperbolic, and that $\widehat{S}$
is a Cauchy surface in $\widehat{M}$. In other words, the projection of $\widehat{M}_i$
into $\widehat{M}$ is a Cauchy embedding. Since $\widehat{M}_i$ is a maximal extension,
these projections are onto. Hence $\widehat{M}_1$ and $\widehat{M}_2$ are isometric.
\end{proof}

\begin{remark}
The uniqueness of the maximal globally hyperbolic AdS extension is no longer true if
we allow interactions. Indeed, in the next section we will see how,
given some singular AdS spacetime without interaction, to define a surgery near a point in a singular line,
introducing some collision or interaction at this point. The place where such a surgery
can be performed is arbitrary.

However, the uniqueness of the maximal globally hyperbolic extension holds in the case of interactions,
if one stipulates than no new interactions can be introduced.
The point is to consider the maximal extension in the future of a Cauchy surface
in the future of all interactions, and the maximal extension in the
past of a Cauchy surface contained in the past of all interactions.
This point, along with other aspects of the global geometry of moduli spaces of
AdS manifolds with interacting particles, is further studied in \cite{colII}.
\end{remark} 

\section{Global examples}
\label{sc:examples}

The main goal of this section is to construct examples of globally hyperbolic
AdS manifolds with interacting particles, so we go beyond the local examples 
constructed in Section 2.

\subsection{An explicit example} \label{ssc:explicit}

Let $S$ be a hyperbolic surface with one cone point $p$ of angle
$\theta$.   Denote by $\mu$ the corresponding
singular hyperbolic metric on $S$.

Let us consider the Lorentzian metric on $S\times (-\pi/2,\pi/2)$ given by
\begin{equation}\label{ads:eq}
   h= -{\rm d}t^2\ + \cos^2t \ \mu
 \end{equation}
 where $t$ is the real parameter of the interval $(-\pi/2,\pi/2)$.

 We denote by $M(S)$ the singular spacetime $(S\times(-\pi/2,\pi/2),h)$.
 \begin{lemma}
 $M(S)$ is an $AdS$ spacetime with a particle corresponding to the
   singular line $\{p\}\times(-\pi/2,\pi/2)$.  The corresponding cone
   angle is $\theta$. Level surfaces $S\times\{t\}$ are orthogonal to
   the singular locus.
 \end{lemma}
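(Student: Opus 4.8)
The plan is to treat the three assertions in turn, dealing first with the regular part and then with the singular line.

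First I would check that, away from the singular locus, $h$ is locally isometric to $\AdS_3$. Since $\mu$ is locally isometric to $\HH^2$ on $S\setminus\{p\}$, it is enough to show that the warped product metric $-dt^2+\cos^2 t\,g_{\HH^2}$ on $\HH^2\times(-\pi/2,\pi/2)$ has constant sectional curvature $-1$; being a Lorentzian $3$-manifold of constant curvature $-1$, it is then locally modelled on $\AdS_3$. Writing $f(t)=\cos t$, this is a routine warped-product curvature computation: the fiber $(\HH^2,g_{\HH^2})$ has curvature $-1$, and $f$ satisfies $f''+f=0$ together with $(f')^2+f^2=1$, which forces both the ``mixed'' sectional curvatures (planes containing $\partial_t$) and the ``fiber'' sectional curvatures to equal $-1$. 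Concretely this is just the expression of the $\AdS_3$ metric in Fermi coordinates based at a totally geodesic space-like plane, the factor $\cos t$ reflecting the solution $J(t)=\cos(t)\,J(0)$ of the Jacobi equation $\ddot J+J=0$ along the time-like normal geodesics; the same metric already occurs in the suspension construction of Lemma~\ref{le.CTC}.

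Next I would analyse the singular locus $c=\{p\}\times(-\pi/2,\pi/2)$. Its tangent vector $\partial_t$ satisfies $h(\partial_t,\partial_t)=-1$, so $c$ is time-like. Writing $\mu$ in geodesic polar coordinates around the cone point, $\mu=d\rho^2+\sinh^2\rho\,d\phi^2$ with $\phi\in\R/\theta\Z$, the metric becomes $h=-dt^2+\cos^2 t\,(d\rho^2+\sinh^2\rho\,d\phi^2)$, which exhibits $c$ as a time-like cone singularity, i.e. a massive particle in the sense of Section~\ref{sub:massiveparticle}.

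To read off the cone angle I would use the slice $\Sigma_0=S\times\{0\}$. Its induced metric is $\mu$, hyperbolic with a cone point of angle $\theta$ at $p$; moreover $\Sigma_0$ is totally geodesic, since in these coordinates its shape operator is $\tfrac{f'(t)}{f(t)}\,\mathrm{Id}$, which vanishes at $t=0$, and it meets $c$ orthogonally. By the definition of the cone angle of a massive particle in Section~\ref{sub:massiveparticle} -- the angle measured in a totally geodesic hyperbolic plane orthogonal to the time-like singular line -- the cone angle of $c$ equals the cone angle of $\mu$ at $p$, namely $\theta$ (and it is constant along $c$, being an invariant of the elliptic holonomy). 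Orthogonality of the level surfaces is then immediate from the block-diagonal form of $h$: a vector $X$ tangent to $S\times\{t_0\}$ satisfies $dt(X)=0$, whence $h(\partial_t,X)=-dt(\partial_t)\,dt(X)+\cos^2 t_0\,\mu(\partial_t,X)=0$.

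The only genuine subtlety I anticipate is the identification of the cone angle: one must ensure that the notion from Section~\ref{sub:massiveparticle}, defined through a totally geodesic hyperbolic plane orthogonal to the singular line, is reproduced faithfully, which is precisely why singling out the totally geodesic slice $\Sigma_0$ -- rather than an arbitrary $S\times\{t_0\}$, whose shape operator $\tfrac{f'(t_0)}{f(t_0)}\,\mathrm{Id}$ does not vanish -- is essential. The constant-curvature verification itself is straightforward once the sign conventions of the Lorentzian warped product are fixed.
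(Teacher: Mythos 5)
Your overall strategy --- Fermi/warped-product coordinates based on a totally geodesic space-like slice --- is essentially the paper's, and your treatment of the regular part is fine: the paper pulls back the $AdS_3$ metric under the explicit normal evolution map $F(x,t)=\cos t\, x+\sin t\, n$ with $S$ realized as a geodesic plane in $AdS_3\subset\R^{2,2}$, while you verify constant curvature $-1$ of the warped product directly; both arguments are valid and give the same conclusion on $M(S)$ minus the singular line.

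The gap is in the singular part. Writing $h=-dt^2+\cos^2 t\,(d\rho^2+\sinh^2\rho\,d\phi^2)$ with $\phi\in\R/\theta\Z$ does not by itself ``exhibit $c$ as a massive particle in the sense of Section~\ref{sub:massiveparticle}'': in the paper a massive particle is \emph{defined} by the wedge construction in $\uAdS_3$ (a wedge of angle $\theta$ between two time-like half-planes through a time-like geodesic, with faces glued by an elliptic isometry), so what must actually be proved is that a neighborhood of a point of $c$ in $M(S)$ is isometric, singular line to singular line, to a neighborhood in this model $\mathcal{P}_\theta$. That identification is precisely the content of the second half of the paper's proof: one takes the totally geodesic hyperbolic plane $P\subset\mathcal{P}_\theta$ orthogonal to the singular line (a hyperbolic cone of angle $\theta$ by construction of the model), shows that its normal evolution extends over the cone point to a diffeomorphism of $P\times(-\pi/2,\pi/2)$ onto a neighborhood of the singular line in $\mathcal{P}_\theta$, and that the pulled-back metric is again of the form (\ref{ads:eq}); since $\mu$ near $p$ and $P$ near its cone point are isometric, the local isometry follows. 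Your subsequent step --- reading the angle off the totally geodesic slice $\Sigma_0$ --- cannot repair this omission, because it presupposes that $c$ is already locally modelled on some $\mathcal{P}_{\theta'}$ and only then identifies $\theta'=\theta$; as a substitute for the model identification it is circular. (Once that identification is carried out, the angle statement is automatic, since $\Sigma_0$ corresponds to the slice $P$ of the model, so the $\Sigma_0$ observation becomes a pleasant remark rather than a needed argument.) In short, you have located all the right ingredients, but the load-bearing step --- matching the warped product over a hyperbolic cone with the wedge-glued model of Section~\ref{sub:massiveparticle} --- is asserted rather than proved, and it is exactly where the paper spends its effort.
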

\begin{proof}
First we show that $h$ is an $AdS$ metric. The computation is local, so
we can assume $S=\mathbb H^2$. Thus we can identify $S$ to a geodesic
plane in $AdS_3$. We consider $AdS_3$ as embedded in $\R^{2,2}$, as mentioned in the introduction.
Let $n$ be the normal direction to $S$ then we can
consider the normal evolution
\[
      F:S\times (-\pi/2,\pi/2)\ni(x,t)\mapsto \cos t x +\sin t n\in AdS_3\,.
\]
The map $F$ is a diffeomorphism onto an open domain of $AdS_3$ and the
pull-back of the $AdS_3$-metric takes the form (\ref{ads:eq}).

To prove that $\{p\}\times (-\pi/2,\pi/2)$ is a conical singularity of
angle $\theta$, take a geodesic plane $P$ in $\mathcal P_\theta$
orthogonal to the singular locus. Notice that $P$ has exactly one cone
point $p_0$ corresponding to the intersection of $P$ with the singular
line of $\mathcal P_\theta$ (here $\mathcal P_\theta$ is the singular model space
defined in Subsection \ref{sub:desgeom}). 
Since the statement is local, it is
sufficient to prove it for $P$. Notice that the normal evolution of
$P\setminus\{p_0\}$ is well-defined for any $t\in (-\pi/2,\pi/2)$. Moreover, such
evolution can be extended to a map on the whole
$P\times(-\pi/2,\pi/2)$ sending $\{p_0\}\times (-\pi/2,\pi/2)$ onto
the singular line. This map is a diffeomorphism of $P\times
(-\pi/2,\pi/2)$ with an open domain of $\mathcal P_\theta$. Since the
pull-back of the $AdS$-metric of $\mathcal P_\theta$ on
$(P\setminus\{p_0\})\times(-\pi/2,\pi/2)$ takes the form
(\ref{ads:eq}) the statement follows.
\end{proof}

Let $T$ be a triangle in $HS^2$, with one vertex in the future hyperbolic region and
two vertices in the past hyperbolic region. Doubling $T$, we obtain
 a causally regular HS-sphere $\Sigma$ with an elliptic future
singularity at $p$  and two elliptic past
singularities, $q_1,q_2$.

Let $r$ be the future singular ray in $e(\Sigma)$. For a given
$\epsilon>0$ let $p_\epsilon$ be the point at distance $\epsilon$ from
the interaction point. Consider the geodesic disk $D_\epsilon$ in
$e(\Sigma)$ centered at $p_\epsilon$, orthogonal to $r$ and with
radius $\epsilon$.

The past normal evolution $n_t:D_\epsilon\rightarrow e(\Sigma)$ is
well-defined for $t\leq\epsilon$.  In fact, if we restrict to the
annulus $A_\epsilon=D_\epsilon\setminus D_{\epsilon/2}$, the evolution
can be extended for $t\leq\epsilon'$ for some $\epsilon'>\epsilon$.

\begin{figure}
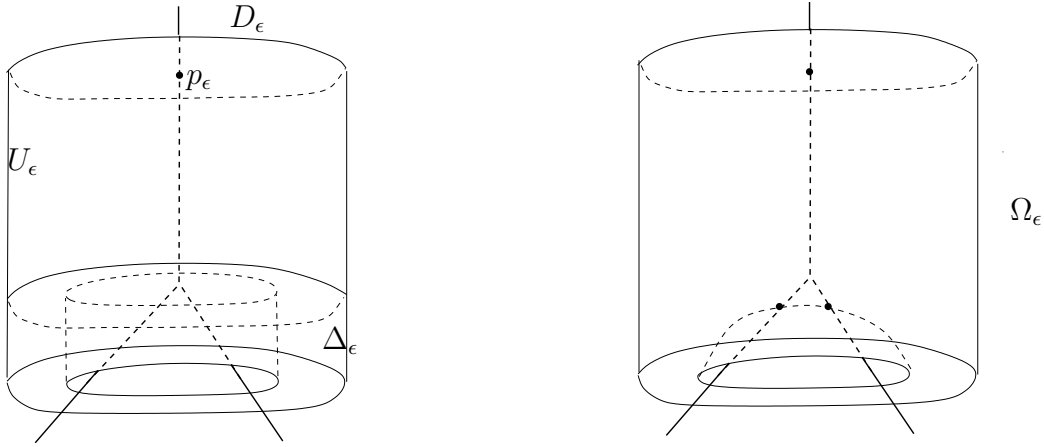

\begin{center}
\input exa.pstex_t
\end{center}
\caption{Construction of a singular tube with an interaction of two particles.}
\end{figure}

Let us set
\[
\begin{array}{l}
   U_\epsilon=\{n_t(p)|p\in D_\epsilon,
   t\in(0,\epsilon)\}\,,\\ 
   \Delta_\epsilon=\{n_t(p)~|~p\in
   D_\epsilon\setminus D_{\epsilon/2}, t\in (0,\epsilon')\}\,.
   \end{array}
\]
Notice that the interaction point is in the closure of $U_\epsilon$.
It is possible to contruct a neighborhood $\Omega_\epsilon$ of the
interaction point $p_0$ such that
\begin{itemize}
\item
$U_\epsilon\cup\Delta_\epsilon\subset\Omega_\epsilon\subset U_\epsilon\cup\Delta_\epsilon\cup B(p)$ where $B(p_0)$ is a small ball around $p_0$;
\item
$\Omega_\epsilon$ admits a foliation in achronal disks
  $(D(t))_{t\in(0,\epsilon')}$ such that
\begin{enumerate}
\item
 $D(t)=n_t(D_\epsilon)$ for $t\leq\epsilon$
\item
  $D(t)\cap\Delta_t=n_t(D_\epsilon\setminus D_{\epsilon/2})$ for
  $t\in(0,\epsilon')$
 \item
  $D(t)$ is orthogonal to the singular locus.
  \end{enumerate}
  \end{itemize}

Consider now the space $M(S)$ as in the previous lemma.
For small $\epsilon$ the disk $D_\epsilon$ embeds in $M(S)$, sending
$p_\epsilon$ to $(p,0)$.

Let us identify $D_\epsilon$ with its image in $M(S)$. The normal
evolution on $D_\epsilon$ in $M(S)$ is well-defined for $0<t<\pi/2$ and
in fact coincides with the map
\[
   n_t(x,0)=(x,t)\,.
\]
It follows that the map
 \[
    F:(D_\epsilon\setminus D_{\epsilon/2})\times
    (0,\epsilon')\rightarrow \Delta_\epsilon
 \]
 defined by $F(x,t)=n_t(x)$ is an isometry.

\begin{figure}[ht]
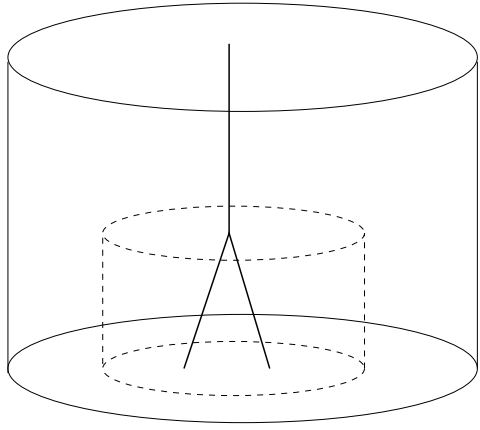

\input interaction.pstex_t
\caption{Surgery to add a collision.}
\label{fig.collision}
\end{figure}

 Thus if we glue $(S\setminus D_{\epsilon/2})\times
 (0,\epsilon')$ to $\Omega_\epsilon$ by identifying
 $D_{\epsilon}\setminus D_{\epsilon/2}$ to $\Delta_\epsilon$ via $F$
 we get a spacetime
 \[
   \hat M=(S\setminus D_{\epsilon/2})\times(0,\epsilon')\cup_F
   \Omega_\epsilon
 \]
  such that
  \begin{enumerate}
  \item
  topologically, $\hat M$ is homeomorphic to $S\times\mathbb R$,
  \item
  in $\hat M$, two particles collide producing one particle only,
  \item
  $\hat M$ admits a foliation by spacelike surfaces orthogonal to the
    singular locus.
  \end{enumerate}

  We say that $\hat M$ is obtained by a surgery on $M'=S\times(0,\epsilon')$.

\subsection{Surgery}\label{surg:sec}

In this section we get a generalization of the
construction explained in the previous section.  In particular we
show how to do a surgery on a spacetime with conical singularity
in order to obtain a spacetime with collision more complicated
than that described in the previous section.

\begin{lemma}\label{surgery-key}
Let $\Sigma$ be a causally regular HS-sphere containing only elliptic
singularities.
Suppose that the circle of photons $C_+$ bounding the future hyperbolic part
of $\Sigma$ carries an elliptic structure of angle $\theta$.
Then $e(\Sigma)\setminus (I^+(p_0)\cup I^-(p_0))$ embeds in $\mathcal P_\theta$
($p_0$ denotes the interaction point of $e(\Sigma)$).
\end{lemma}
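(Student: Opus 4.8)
The plan is to reduce the statement to a uniqueness property for the de Sitter region of $\Sigma$, and then to match that region with the link of a point on the singular line of $\mathcal P_\theta$. First I would identify the set to be embedded. By the description of the local future and past in Section~\ref{sub.futpast} (as used in the proof of Lemma~\ref{le.CTC}), the future $I^+(p_0)$ is the suspension $e(\HH^+_{p_0})$ of the future hyperbolic part and $I^-(p_0)=e(\HH^-_{p_0})$; hence
\[
e(\Sigma)\setminus (I^+(p_0)\cup I^-(p_0)) = e\bigl(\overline{\dS_{p_0}}\bigr),
\]
the suspension of the closed de Sitter region of $\Sigma$ (the de Sitter annulus together with its two bounding circles of photons $C_+$, $C_-$). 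So it suffices to understand $\dS_{p_0}$ as an $HS$-datum and to embed its suspension into $\mathcal P_\theta$.

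Next I would pin down the geometry of $\dS_{p_0}$. Since $\Sigma$ is causally regular, Theorem~\ref{tm:thierry} gives that its de Sitter region is a single annulus joining two hyperbolic disks; and since all singularities of $\Sigma$ are elliptic, hence time-like, they lie in the hyperbolic regions, so $\dS_{p_0}$ is a \emph{regular} de Sitter annulus and $C_+$ carries no light-like singularity. Therefore the $\R\PP^1$-structure on $C_+$ from the de Sitter side coincides with the one from the hyperbolic side, which is elliptic of angle $\theta$ by hypothesis. The holonomy of the annulus along its core is thus the elliptic element $\gamma_\theta\in\SO_0(1,2)\cong\op{PSL}(2,\R)$ of rotation angle $\theta$.

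The central step is a classification: a regular, causal de Sitter annulus with elliptic holonomy of angle $\theta$ is unique up to isometry, and is isometric to $\dS^2/\langle\gamma_\theta\rangle$. I would argue this by developing its universal cover into $\dS^2$ (equivalently into $\R\PP^1\times\R\PP^1\setminus\kD$, as in the proof of Proposition~\ref{pro.hyphyp}): since $C_+$ and $C_-$ develop onto the two boundary circles $\partial\HH^2_+$, $\partial\HH^2_-$ of $\dS^2$, and since $\gamma_\theta$ acts freely as a rotation with no fixed point on the space-like part, the developing map is a homeomorphism onto $\dS^2$, so the quotient is $\dS^2/\langle\gamma_\theta\rangle$. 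I expect this to be the main obstacle: one must check that causality forces the development to be both surjective and injective — rather than wrapping around or missing part of $\dS^2$ — and that no CCC arise, which is exactly where the elliptic nature of the holonomy and the absence of a light-like singularity on $C_+$ enter.

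Finally I would compare with the model. Fix a point $x_0$ on the singular line of $\mathcal P_\theta$; its link is the suspension of an elliptic $\R\PP^1$-circle of angle $\theta$, whose de Sitter region is again $\dS^2/\langle\gamma_\theta\rangle$. By the previous step it is isometric to $\dS_{p_0}$, and because the suspension construction $e(\cdot)$ depends only on the underlying $HS$-structure, this isometry of $HS$-data extends canonically to an isometry of suspensions. The suspension of the closed de Sitter region of the link of $x_0$ is naturally realized inside $\mathcal P_\theta$ as the union of the space-like geodesic rays issued from $x_0$ together with their light-like boundary. Composing, I obtain the required isometric embedding $e(\Sigma)\setminus(I^+(p_0)\cup I^-(p_0)) \hookrightarrow \mathcal P_\theta$, which completes the proof.
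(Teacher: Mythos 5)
Your overall architecture matches the paper's: identify the complement of $I^+(p_0)\cup I^-(p_0)$ with the suspension $e(D)$ of the de Sitter part $D$ of $\Sigma$, reduce the embedding to showing $D$ is isometric to the de Sitter part of the link $\Sigma_\theta$ of a point on the singular line of $\mathcal P_\theta$, and then transport the isometry through the suspension construction. That reduction is exactly the paper's first paragraph. However, there is a genuine gap at what you yourself call the central step: you assert, but do not prove, that a regular causal de Sitter annulus whose boundary circle carries an elliptic structure of angle $\theta$ is isometric to the model quotient. This claim \emph{is} the proof of the lemma; deferring it with ``one must check that causality forces the development to be both surjective and injective'' leaves out the actual mechanism. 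Indeed the driver is not causality: it is the hypothesis on $C_+$. The paper's argument runs as follows. Since $\widetilde{\dS}_2$ is simply connected and the developing map $d:\tilde D\to\widetilde{\dS}_2$ is a local diffeomorphism, it suffices to show $d$ is \emph{proper}. To get properness, complete $\widetilde{\dS}_2$ by the two photon lines $R_\pm\cong\uRP$ and use the left isotropic foliation: mapping each point to the $\omega$-limit of its leaf gives a proper submersion $\iota_L:\widetilde{\dS}_2\cup R_-\cup R_+\to R_+$, and (because $D$ contains no singularities) an analogous proper submersion $\iota'_L:\tilde D\cup\tilde C_-\cup\tilde C_+\to\tilde C_+$. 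These fit into a commutative square with $d$ and $d|_{\tilde C_+}$, and the hypothesis that $C_+$ carries an elliptic structure of angle $\theta$ says precisely that $d|_{\tilde C_+}:\tilde C_+\to R_+$ is a homeomorphism, hence proper; properness of $d$ then follows from the diagram. Your proposal never supplies anything playing the role of $\iota_L$, $\iota'_L$, so the injectivity/surjectivity of $d$ (``no wrapping, nothing missed'') remains unestablished.

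Two smaller points. First, the model object is not $\dS^2/\langle\gamma_\theta\rangle$: for general $\theta$ the rotation $\gamma_\theta\in\SO_0(1,2)$ does not act properly discontinuously on the annulus $\dS^2$, and the quotient would not have cone angle $\theta$. The de Sitter part of the link of a singular point of $\mathcal P_\theta$ is the quotient of the \emph{universal cover} $\widetilde{\dS}_2$ by an elliptic element of $\widetilde{\SO}_0(1,2)$ of angle $\theta$; this is why the whole argument must be run upstairs, with $d$ valued in $\widetilde{\dS}_2$. Second, your preliminary step (all singularities elliptic $\Rightarrow$ $D$ regular and $C_+$ free of light-like singularities, so the two $\R\PP^1$-structures on $C_+$ agree) is correct and consistent with the paper, which uses the regularity of $D$ exactly to define $\iota'_L$ on all of $\tilde D$.
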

\begin{proof}

Let $D$ be the de Sitter part of $\Sigma$,
Notice that
 \[e(D)=e(\Sigma)\setminus (I^+(p_0)\cup I^-(p_0))\,.
 \]
To prove that $e(D)$ embeds in $\mathcal P_\theta$ it is
sufficient to prove that $D$ is isometric to the de Sitter part of the
HS sphere $\Sigma_\theta$ that is the link of a singular point of
$\mathcal P_\theta$. Such de Sitter surface is the quotient of
$\tilde{dS}_2$ under an elliptic transformation of $\tilde{SO}(2,1)$
of angle $\theta$.

So the statement is equivalent to proving that  the developing map
\[
  d:\tilde D\rightarrow \tilde{dS_2}
\]
is a diffeomorphism. Since $\tilde{dS_2}$ is simply connected and $d$ is a local
diffeomorphism, it is sufficient to prove that $d$ is proper.

As in Section~\ref{sec.classificationHS}, $\tilde {dS}_2$ can be
completed by two lines of photons, say $R_+, R_-$ that are
projectively isomorphic to $\tilde{\mathbb R\mathbb P^1}$.

Consider the left isotropic foliation of $\tilde {dS}_2$. Each leaf 
 has an $\alpha$-limit in $R_-$ and an $\omega$-limit on
$R_+$. Moreover every point of $R_-$ (resp. $R_+$) is an
$\alpha$-limit (resp. $\omega$-limit) of exactly one leaf of each
foliation.  Thus we have a continuous projection 
$\iota_L:\tilde{dS_2}\cup R_-\cup R_+\rightarrow R_+$, obtained by sending a point $x$
to the $\omega$-limit of the leaf of the left foliation trough it. The map
$\iota_L$ is a proper submersion.
Since $D$ does not contain singularities, we have an analogous  proper submersion
\[
  \iota'_L:\tilde D\cup\tilde C_-\cup\tilde C_+\rightarrow\tilde C_+\,,  
\]
where $\tilde C_+$, $\tilde C_-$ are the universal covering of the circle of photons of $\Sigma$.

By the naturality of the construction, the following diagram commutes
\[
\begin{CD}
\tilde D\cup\tilde C_-\cup\tilde C@>d>>\tilde{dS_2}\cup R_-\cup R_+\\
@V\iota'_L VV    @V\iota_L VV\\
\tilde C_+ @>d>>\tilde R_+\,.
\end{CD}
\]
The map $d|_{\tilde C_+}$ is the developing map for the projective structure
of $C_+$. By the hypothesis, we have that $d|_{\tilde C_+}$ is a homeomorphism,
so it is proper.

Since the diagram is commutative and the fact that $\iota_L$ and $\iota'_L$ are both proper
one easily proves that $d$ is proper.
\end{proof}

\begin{remark}
If $\Sigma$ is a causally regular HS-sphere containing only elliptic singularities,
the map $\iota'_L:\tilde C_-\rightarrow\tilde C_+$ induces a projective
isomorphism $\bar\iota:C_-\rightarrow C_+$.
\end{remark}

\begin{defi}
Let $M$ be a singular spacetime homeomorphic to $S\times\mathbb R$
and let $p\in M$. A neighborhood $U$ of $p$ is said to be {\bf cylindrical} if
\begin{itemize}
\item $U$ is topologically a ball;
\item $\partial_\pm C:=\partial U\cap I^\pm(p)$ is a spacelike disk;
\item there are two disjoint closed spacelike slices $S_-, S_+$ homeomorphic to $S$
  such that $S_-\subset I^-(S_+)$ and $I^\pm(p)\cap S_\pm=
  \partial_\pm C$.
\end{itemize}
  \end{defi}

\begin{remark}
\-
\begin{itemize}
\item
If a spacelike slice through $p$ exists then
cylindrical neighborhoods form a fundamental family of neighborhoods.
\item
There is an open retract $M'$ of $M$ whose boundary is $S_-\cup S_+$.
\end{itemize}
\end{remark}

\begin{cor}
Let $\Sigma$ be a HS-sphere as in Lemma \ref{surgery-key}.  Given an
$AdS$ spacetime $M$ homeomorphic to $S\times\mathbb R$ containing a
particle of angle $\theta$, let us fix a point $p$ on it and suppose
that a spacelike slice through $p$ exists. There is a cylindrical
neighborhood $C$ of $p$ and a cylindrical neighborhood $C_0$ of the
interaction point $p_0$ in $e(\Sigma)$ such that $C\setminus (
I^+(p)\cup I^-(p))$ is isometric to $C_0\setminus(I^+(p_0)\cup
I^-(p_0))$.
\end{cor}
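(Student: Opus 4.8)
The plan is to reduce both sides of the claimed isometry to the model massive‑particle spacetime $\mathcal P_\theta$, using on one side that $p$ lies on a cone singularity of angle $\theta$ and on the other side Lemma~\ref{surgery-key}. Since $M$ contains a particle of angle $\theta$ and $p$ lies on it, the classification of time‑like singular lines (Section~\ref{sub:massiveparticle}) provides an isometry of a neighborhood of $p$ onto a neighborhood of a point $p_1$ on the singular line of $\mathcal P_\theta$, carrying $p$ to $p_1$. Because a spacelike slice through $p$ is assumed to exist, cylindrical neighborhoods of $p$ form a fundamental family, so I fix one such $C$ inside the region isometric to $\mathcal P_\theta$, choosing its spacelike slices $S_\pm$ orthogonal to the singular locus (the disks $D(t)$ of Section~\ref{ssc:explicit}). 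Using the description of local future and past as suspensions of the hyperbolic parts of the link (Section~\ref{sub.futpast}), the set $C\setminus(I^+(p)\cup I^-(p))$ is exactly the de Sitter suspension $e(D_\theta)$ of the de Sitter part $D_\theta$ of the link $\Sigma_\theta$ of $p_1$, truncated at some radius; under the local model it is carried isometrically onto a de Sitter tube $V\subset\mathcal P_\theta$ around $p_1$.

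Next I treat $e(\Sigma)$. By the remark following Lemma~\ref{surgery-key}, $\iota'_L$ induces a projective isomorphism $\bar\iota\colon C_-\to C_+$, so the past circle of photons $C_-$ also carries an elliptic structure of angle $\theta$; hence, exactly as in the proof of Lemma~\ref{surgery-key}, the de Sitter part $D$ of $\Sigma$ is isometric to $D_\theta=\tilde{dS}_2/\langle g_\theta\rangle$, the de Sitter part of the link of a singular point of $\mathcal P_\theta$. Passing to suspensions and invoking Lemma~\ref{surgery-key} directly, $e(D)=e(\Sigma)\setminus(I^+(p_0)\cup I^-(p_0))$ embeds isometrically in $\mathcal P_\theta$ with image the full de Sitter suspension around a singular point. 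I then take a cylindrical neighborhood $C_0$ of $p_0$, of suspension type with slices orthogonal to the singular locus (so that such a neighborhood exists and is cylindrical without assuming a slice through $p_0$ is given), and again by the future/past description $C_0\setminus(I^+(p_0)\cup I^-(p_0))$ is the truncated suspension $e(D)\cong e(D_\theta)$.

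Composing the two identifications with $\mathcal P_\theta$ yields the desired isometry between $C\setminus(I^+(p)\cup I^-(p))$ and $C_0\setminus(I^+(p_0)\cup I^-(p_0))$, once both are arranged to map onto the same tube $V$. Concretely, the suspension parameter is the geodesic distance to the singular line in $\mathcal P_\theta$, which is an intrinsic quantity; so it suffices to choose $C$ and $C_0$ of the same radius, whereupon the two truncated copies of $e(D_\theta)$ coincide inside $\mathcal P_\theta$ and the composed map is the required isometry.

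The step I expect to be the main obstacle is the compatibility of the two routes into $\mathcal P_\theta$: one must check that the radial (suspension) coordinate coming from the local massive‑particle model on the $M$–side agrees, as a function on the de Sitter tube, with the one coming from the embedding of Lemma~\ref{surgery-key} on the $e(\Sigma)$–side, so that truncating ``at equal radius'' really produces the same region $V$. This is exactly where the orthogonality of the slices $S_\pm$ to the singular locus and the achronal‑disk foliation of Section~\ref{ssc:explicit} are used: they force the two truncations to be taken along matching level sets of the same intrinsic parameter, and they guarantee that the spacelike boundary disks $\partial_\pm C$ and $\partial_\pm C_0$ cut $V$ along corresponding slices, so that the future/past decomposition is respected by the identification.
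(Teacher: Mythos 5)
Your proof is correct and takes essentially the route the paper intends (the corollary is stated without proof, but the paper's own proof of Proposition \ref{pr:example} confirms the mechanism): identify a neighborhood of $p$ with a neighborhood of a point $p_1$ on the singular line of $\mathcal P_\theta$ via the local classification of massive particles, use Lemma \ref{surgery-key} to identify $e(\Sigma)\setminus(I^+(p_0)\cup I^-(p_0))$ with the de Sitter suspension around $p_1$, and then cut out matching cylindrical neighborhoods. The compatibility point you flag -- arranging that both truncations correspond to the same region of $\mathcal P_\theta$ -- is indeed the only real issue, and your resolution (truncate along level sets of the intrinsic radial parameter, with slices orthogonal to the singular locus) is sound.
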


Take an open deformation retract $M'\subset M$ with spacelike boundary
such that $\partial_\pm C\subset\partial M'$.  Thus let us glue
$M'\setminus ( I^+(p)\cup I^-(p))$ and $C_0$ by identifying
$C\setminus (I^+(p)\cup I^-(p))$ to $C_0\cap e(D)$. In this way we get
a spacetime $\hat M$ homeomorphic to $\Sigma\times\mathbb R$ with an
interaction point modelled on $e(\Sigma)$. We say that $\hat M$ is
obtained by a surgery on $M'$.

The following proposition is a kind of converse to the previous construction.

\begin{prop} \label{pr:example}
Let $\hat M$ be a spacetime with conical singularities homeomorphic to
$S\times\mathbb R$ containing only one interaction between
particles. Suppose moreover that a neighborhood of the interaction point is isometric to
an open subset in 
$e(\Sigma)$, where $\Sigma$ is a HS-surface as in
Lemma~\ref{surgery-key}.  Then a subset of $M$ is obtained by
a surgery on a spacetime without interaction.
\end{prop}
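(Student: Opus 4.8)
The plan is to reverse the surgery construction described just before the statement. The only interaction of $\hat M$ is the point $p_0$, and by hypothesis a neighborhood of $p_0$ is isometric to an open subset of $e(\Sigma)$ for an HS-sphere $\Sigma$ as in Lemma~\ref{surgery-key}. First I would produce a cylindrical neighborhood $C_0\subset\hat M$ of $p_0$: since $\Sigma$ is causally regular, $e(\Sigma)$ carries a spacelike disk through $p_0$ orthogonal to the singular locus (exactly as in the explicit example of Section~\ref{ssc:explicit}), and such a disk gives a spacelike slice through $p_0$ in $\hat M$; cylindrical neighborhoods then form a fundamental family of neighborhoods of $p_0$.

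The heart of the argument is to excise the cone $I^+(p_0)\cup I^-(p_0)$ and to glue in the corresponding piece of the model spacetime $\mathcal P_\theta$. By Lemma~\ref{surgery-key} the de Sitter part $e(D)=e(\Sigma)\setminus(I^+(p_0)\cup I^-(p_0))$ embeds isometrically in $\mathcal P_\theta$; hence the collar $C_0\cap e(D)=C_0\setminus(I^+(p_0)\cup I^-(p_0))$ is identified with an open subset of $\mathcal P_\theta$ surrounding a point $p$ of its singular line. I would choose a cylindrical neighborhood $C$ of $p$ in $\mathcal P_\theta$ whose exterior $C\setminus(I^+(p)\cup I^-(p))$ is exactly this collar, so that $C_0\cap e(D)$ and $C\setminus(I^+(p)\cup I^-(p))$ coincide as subsets of $\mathcal P_\theta$. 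Then I set
\[
   M = \bigl(\hat M\setminus (I^+(p_0)\cup I^-(p_0))\bigr)\ \cup\ C,
\]
glued along this common collar. The isometric identification makes $M$ a singular AdS spacetime, and $M$ contains no interaction: inside $C$ the only singularity is the particle line of $\mathcal P_\theta$ of angle $\theta$, while outside $C_0$ the spacetime $\hat M$ already had only particle lines, $p_0$ being its sole interaction.

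It remains to check that $M$ is homeomorphic to $S\times\RR$ and that $\hat M$ is recovered as a surgery on $M$. Topologically, replacing the cone over the interaction link by the cone over a single elliptic point does not change the homeomorphism type of a cylindrical neighborhood, both being balls, so $M\approx S\times\RR$ with a single particle in place of the interaction. Reversing the gluing, the relevant subset of $\hat M$ is obtained from a deformation retract $M'\subset M$ with spacelike boundary by removing $I^+(p)\cup I^-(p)$ and inserting the cylindrical neighborhood $C_0$ of $p_0$, which is precisely the surgery construction above applied to the interaction-free spacetime $M$. The main obstacle is the bookkeeping of the future and past cones: one must verify that the inserted model cone and the excised cone attach compatibly to the common collar, so that $M$ is Hausdorff, is free of any new singularity, and remains globally hyperbolic. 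This is where the causal regularity of $\Sigma$ and the angle-matching hypothesis of Lemma~\ref{surgery-key} enter, guaranteeing that the inserted particle has exactly angle $\theta$ and that no closed causal curves are created along the gluing.
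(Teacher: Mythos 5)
Your overall strategy---invert the surgery by excising the cones of the interaction point and gluing in a cylindrical neighborhood $C$ of a point $p$ on the singular line of $\mathcal P_\theta$, via the identification of collars coming from Lemma~\ref{surgery-key}---is the right one, and it is the paper's strategy too. But there is a genuine gap in how you perform the excision. You define
\[
M = \bigl(\hat M\setminus (I^+(p_0)\cup I^-(p_0))\bigr)\ \cup\ C ,
\]
removing the \emph{global} chronological future and past of $p_0$ from all of $\hat M$, while the piece $C$ you glue back only fills the part of those cones lying inside the cylindrical neighborhood $C_0$. Beyond the caps $\partial_\pm C_0$, the sets $I^\pm(p_0)$ keep spreading through $\hat M$ in an uncontrolled way (they may engulf other singular lines, and eventually whole spacelike slices); after your excision nothing is glued back there. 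Consequently points of $\partial I^{\pm}(p_0)$ outside $C_0$ have no neighborhood homeomorphic to a ball in your $M$: it is a manifold with boundary along those light cones (or, if you excise closed cones, a space with the wrong topology), and your claim that $M\approx S\times\RR$ ``because both cones are balls'' is only valid for the local picture inside a cylindrical neighborhood, not for the global excision you actually performed. The Hausdorff/causality issues you flag at the end are not where the problem lies; the problem is that the hole you cut is much bigger than the patch you sew in.

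The missing idea---and the device the paper uses---is to pass \emph{first} to the open deformation retract $M'\subset \hat M$ whose boundary is the union of the two spacelike slices $S_\pm$ furnished by the definition of a cylindrical neighborhood, chosen so that $\partial_\pm C_0\subset \partial M'$. The defining property $I^{\pm}(p_0)\cap S_\pm=\partial_\pm C_0$ confines the cones: inside $M'$, a future timelike curve from $p_0$ can only leave $C_0$ through the cap $\partial_+C_0\subset S_+$, i.e.\ by leaving $M'$, so $I^{\pm}(p_0)\cap M'$ is exactly the cone part of $C_0$. Then $M^0=\bigl(M'\setminus(I^+(p_0)\cup I^-(p_0))\bigr)\cup C$ is a genuine singular AdS spacetime homeomorphic to $S\times\RR$ with a single singular line, and the surgery on $M^0$ reproduces $M'$. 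Note also that this is precisely why the proposition claims only that \emph{a subset} of $\hat M$ (namely $M'$) is obtained by surgery on an interaction-free spacetime; your proof, which tries to reconstruct all of $\hat M$, is attempting to prove a stronger statement than the one at hand, and one that this method cannot deliver in general.
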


\begin{proof}
Let $p_0$ be the interaction point. There is an HS-sphere $\Sigma$ as
in Lemma~\ref{surgery-key} such that a neighborhood of $p_0$ is
isometric to a neighborhood of the vertex of $e(\Sigma)$. In
particular there is a small cylindrical neighborhood $C_0$ around
$p_0$. According to Lemma~\ref{surgery-key}, for a suitable cylindrical
neighborhood $C$ of a singular point $p$ in $\mathcal P_\theta$ we
have
\[
    C\setminus (I^+(p)\cup I^-(p))\cong C_0\setminus(I^+(p_0)\cup I^-(p_0))
 \]
 Taking the retract $M'$ of $\hat M$ such that $\partial_\pm C_0$ is in the
 boundary of $M'$, the spacetime $M'\setminus(I^+(p_0)\cup I^-(p_0))$
 can be glued to $C$ via the above identification. We get a spacetime
 $M^0$ with only one singular line. Clearly the surgery on $M^0$ of
 $C_0$ produces $M'$.
  \end{proof}

  \subsection{Spacetimes containing BTZ-type singularities}
  \label{sub.bhfrancesco}

  In this section we describe a class of spacetimes containing
BTZ-type singularities.

We use the projective model of $AdS$ geometry, that is the $AdS_{3,+}$.
From Subsection \ref{sub:backgroundads}, $AdS_{3,+}$ is a domain in $\mathbb R\mathbb P^3$ bounded
by the quadric $Q$ of signature $(1,1)$.  Using the double family of lines $\mathcal L_l,\mathcal L_r$
we identify $Q$ to $\mathbb R\mathbb P^1\times\mathbb R\mathbb P^1$ so that the isometric
action of $\isomz=PSL(2,\mathbb R)\times PSL(2,\mathbb R)$ on $AdS_3$ extends to the product action
on the boundary.

%

We have seen in Section \ref{sub:backgroundads} that gedesics of $AdS_{3,+}$ 
are projective segments  whereas geodesics planes are the intersection of
$AdS_{3,+}$ with projective planes.The scalar product of
$\mathbb R^{2,2}$ induces a duality between points and projective planes
and between projective lines. In particular points in $AdS_3$ are dual to spacelike planes
and the dual of a  spacelike geodesic is still a spacelike geodesics. Geometrically,
every timelike geodesic starting from a point $p\in AdS_3$ orthogonally meets
the dual plane at time $\pi/2$, and points on the dual plane can be characterized
by the poperty to be connected to $p$ be a timelike geodesic of length $\pi/2$.
Analogously, the dual line of a line $l$ is the set of points that be can be connected to every point
of $l$ by a timelike geodesic of length $\pi/2$.

Now, consider two hyperbolic transformations $\gamma_1,\gamma_2\in
PSL(2,\mathbb R)$ with the same translation length. There are
exactly $2$ spacelike geodesics $l_1,l_2$ in $AdS_3$ that are 
invariant under the action of $(\gamma_1,\gamma_2)\in PSL(2,\mathbb
R)\times PSL(2,\mathbb R)=\isomz$.
Namely, if $x^+(c)$ denotes the attractive fixed point of a hyperbolic transformation $c\in PSL(2,\mathbb R)$,   
$l_2$ is the line in $AdS_3$ joining the boundary points
$(x^+(\gamma_1), x^+(\gamma_2))$ and $(x^+(\gamma_1^{-1}), x^+(\gamma_2^{-1}))$.
On the other hand $l_1$ is the geodesic dual to $l_2$, the endpoints of $l_1$ 
are $(x^+(\gamma_1),x^+(\gamma_2^{-1}))$ and $(x^+(\gamma_1^{-1}),x^+(\gamma_2))$.

  Points of $l_1$ are fixed by
  $(\gamma_1,\gamma_2)$ whereas it acts by pure translation on $l_2$. 
The union of the timelike segments with past end-point on $l_2$ and
  future end-point on $l_1$ is a domain $\Omega_0$ in $AdS_{3,+}$
  invariant under $(\gamma_1,\gamma_2)$.  The action of
  $(\gamma_1,\gamma_2)$ on $\Omega_0$ is proper and free and the
  quotient $M_0(\gamma_1,\gamma_2)=\Omega_0/(\gamma_1,\gamma_2)$ is a
  spacetime homeomorphic to $S^1\times\mathbb R^2$.


  There exists a spacetime with singularities 
  $\hat M_0(\gamma_1,\gamma_2)$ such that $M_0(\gamma_1,\gamma_2)$ is
  isometric to the regular part of $\hat M_0(\gamma_1,\gamma_2)$ and
  it contains a future BTZ-type singularity.
 Define
 \[
 \hat M_0(\gamma_1,\gamma_2)= (\Omega_0\cup l_1)/(\gamma_1,\gamma_2)
 \]

 To show that $l_1$ is a future BTZ-type singularity, let us
 consider an alternative description of $\hat M_0(\gamma_1,\gamma_2)$.
 Notice that a fundamental domain in $\Omega_0\cup l_1$ for the action
 of $(\gamma_1,\gamma_2)$ can be constructed as follows. Take on $l_2$
 a point $z_0$ and put $z_1=(\gamma_1,\gamma_2)z_0$.  Then consider
 the domain $P$ that is the union of timelike geodesic joining a point
 on the segment $[z_0,z_1]\subset l_2$ to a point on $l_1$. $P$ is
 clearly a fundamental domain for the action with two timelike
 faces. $\hat M_0(\gamma_1,\gamma_2)$ is obtained by gluing the faces
 of $P$. 

\begin{figure}
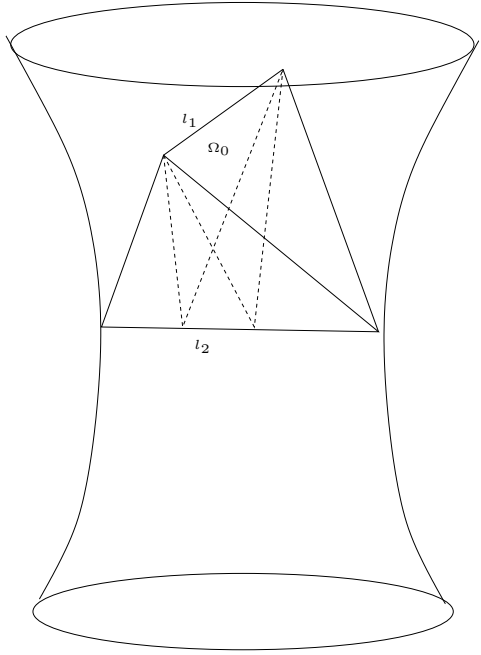

\begin{center}
\input btz.pstex_t
\caption{The region P is bounded by the dotted triangles, whereas $M_0(\gamma_1,\gamma_2)$ is obtained
by gluing the faces of P}
\end{center}
\end{figure}

We now generalize the above constructions as follows.
Let us fix a surface $S$ with some boundary component and
negative Euler characteristic. Consider on $S$ two
hyperbolic metrics $\mu_l$ and $\mu_r$ with geodesic boundary such
that each boundary component has the same length with respect to those
metrics.

Let $h_l,h_r:\pi_1(S)\rightarrow PSL(2,\mathbb R)$ be the
corresponding holonomy representations.  The pair
$(h_l,h_r):\pi_1(S)\rightarrow PSL(2,\mathbb R)\times
PSL(2,\mathbb R)$ induces an isometric action of $\pi_1(S)$ on
$AdS_3$.

In \cite{barbtz1,barbtz2, bks} it is proved that there exists a convex
domain $\Omega$ in $\AdS_{3,+}$ invariant under the action of
$\pi_1(S)$ and the quotient $M=\Omega/\pi_1(\Sigma)$ is a
strongly causal manifold homeomorphic to $S\times\mathbb R$. 
For the convenience of the reader we sketch the construction of $\Omega$
referring to \cite{barbtz1, barbtz2} for details.

The domain $\Omega$ can be defined as follows.
 First consider the \emph{limit set} $\Lambda$ defined as the closure of the set
 of pairs $(x^+(h_l(\gamma)), x^+(h_r(\gamma))$ for $\gamma\in\pi_1(S)$.
$\Lambda$ is a $\pi_1(S)$-invariant 
subset of $\partial AdS_{3,+}$ and it turns out that there exists a spacelike
plane $P$ disjoint from $\Lambda$. So we can consider the convex hull $K$ of $\Lambda$ in 
the affine chart $\mathbb R\mathbb P^3\setminus P$.

$K$ is a convex subset contained in $AdS_{3,+}$. 
For any peripheral loop $\gamma$, the spacelike geodesic $c_\gamma$ joining $(x^+(h_l(\gamma^{-1})), x^+(h_r(\gamma^{-1})))$ to
 $(x^+(h_l(\gamma)), x^+(h_r(\gamma)))$ is contained in $\partial K$ and
 $\Lambda\cup \bigcup c_\gamma$ disconnects $\partial K$ into components called the future boundary,  $\partial_+ K$, and the past boundary, $\partial_-K$.
 
 One then defines $\Omega$ as the set of points whose dual plane is disjoint from $K$.
 We have
 \begin{enumerate}
 \item the interior of $K$ is contained in $\Omega$.
 \item $\partial\Omega$ is the set of points whose dual plane is a support plane for $K$.
 \item $\partial\Omega$ has two components: the past and the future boundary.
Points dual to support planes of $\partial_-K$ are contained in the future boundary of $\Omega$,
whereas points dual to support planes of $\partial_+K$ are contained in the past boundary 
of $\Omega$.
\item
Let $\mathcal A$ be the set of triples $(x,v,t)$, where $t\in[0,\pi/2]$, $x\in\partial_-K$ and
$v\in\partial_+\Omega$ is a point dual to  some support plane of $K$ at $x$. We consider
the normal evolution map $\Phi:\mathcal A\rightarrow AdS_{3,+}$, where $\Phi(x,v,t)$ is the point 
on the geodesic segment joining $x$ to $v$ at distance $t$ from $x$. In \cite{bb_wick} the map $\Phi$
is shown to be injective.
\end{enumerate}

\begin{figure}
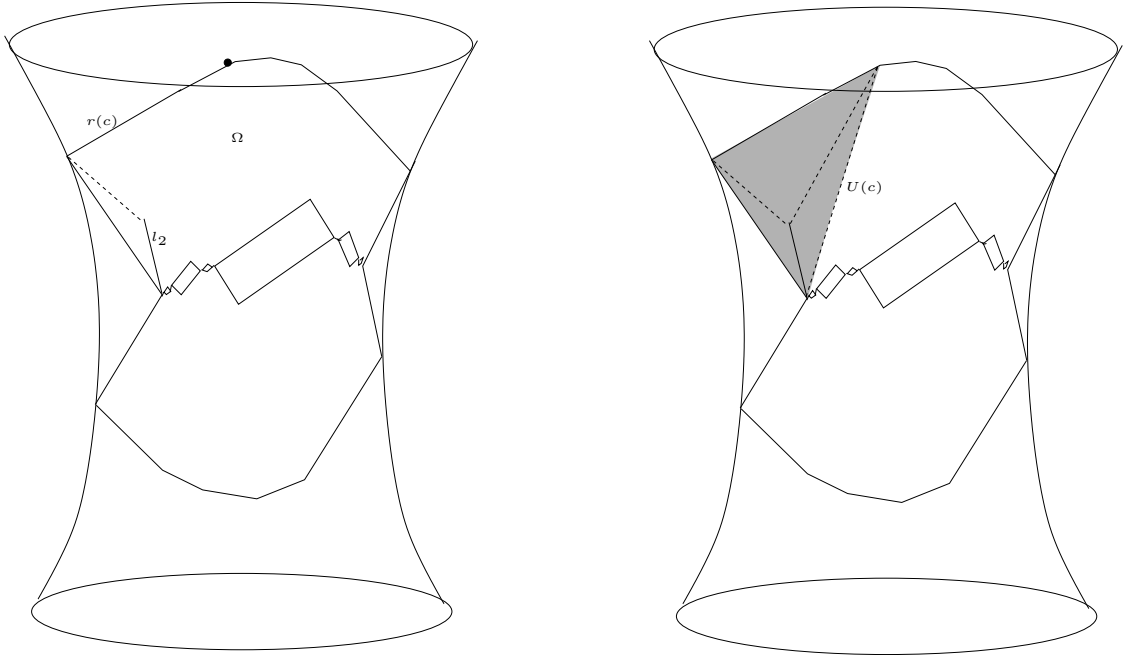

\begin{center}
\input multi.pstex_t
\end{center}
\caption{The segment $r(c)$ projects to a BTZ-type singularity for $M$.}
\end{figure}

\begin{prop}\label{pp}
There exists a manifold with singularities $\hat M$ such that
\begin{enumerate}
\item The regular part of $\hat M$ is $M$.
\item There is a future BTZ-type singularity and a past BTZ-type
  singularity for each boundary component of $M$.
\end{enumerate}
 \end{prop}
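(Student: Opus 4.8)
The plan is to reproduce, near each end of $M$, the completion performed above for the elementary model $\hat M_0(\gamma_1,\gamma_2)$, and to glue these local completions along $\Omega$.

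\emph{Reduction to the ends.} Each boundary component of $S$ is carried by a peripheral conjugacy class $[\gamma]$ of $\pi_1(S)$. By hypothesis $\mu_l$ and $\mu_r$ assign the same length to the corresponding boundary geodesic, so $h_l(\gamma)$ and $h_r(\gamma)$ are two hyperbolic elements of $PSL(2,\R)$ of equal translation length: this is precisely the data $(\gamma_1,\gamma_2)$ entering $M_0(\gamma_1,\gamma_2)$. Thus the spacelike geodesic $c_\gamma\subset\partial K$ on which $(h_l(\gamma),h_r(\gamma))$ acts by translation is the analogue of $l_2$, and I would single out the two spacelike geodesics fixed pointwise by the peripheral element---one in the future of $c_\gamma$ and one in its past, playing the role of $c_+$ and $c$ in the model black hole $\mathcal B$ of Section~\ref{sub:btz}. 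Denote them $r_+(c_\gamma)$ and $r_-(c_\gamma)$.

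\emph{The completed spacetime.} Using that the dual of a point of $\partial\Omega$ is a support plane of $K$, I would show that near the end labelled by $[\gamma]$ the future boundary $\partial_+\Omega$ accumulates exactly on $r_+(c_\gamma)$ and the past boundary $\partial_-\Omega$ on $r_-(c_\gamma)$; at that end this reduces to the corresponding statement for $M_0(h_l(\gamma),h_r(\gamma))$. I then set
\[
\hat M=\Bigl(\Omega\cup\bigcup_{[\gamma]}\pi_1(S)\cdot\bigl(r_+(c_\gamma)\cup r_-(c_\gamma)\bigr)\Bigr)\big/\pi_1(S),
\]
the union being over the finitely many peripheral classes. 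The group $\pi_1(S)$ permutes the adjoined geodesics, the stabiliser of $r_\pm(c_\gamma)$ being $\langle\gamma\rangle$, which fixes it pointwise; proper discontinuity and the Hausdorff property of the quotient follow from those of the action on $\Omega$ together with the explicit local model at each adjoined line.

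\emph{Identification of the singularities.} That the regular part of $\hat M$ is $M$ is immediate, since only the singular geodesics were added. To see that $r_+(c_\gamma)$ (resp.\ $r_-(c_\gamma)$) is a future (resp.\ past) BTZ-type singularity, I would compare a tubular neighbourhood of the line in $\hat M$ with the local model of Section~\ref{sub:btz}: the holonomy around it is the hyperbolic isometry $(h_l(\gamma),h_r(\gamma))$ fixing the line pointwise, and by the causal description in Section~\ref{sub.futpast} such a line has no future and a single past component isometric to the quotient of the past of a point of $\uAdS_3$ by this isometry, i.e.\ a static BTZ black hole (and time-symmetrically for $r_-$). The main obstacle is the accumulation statement of the second step: one must control the asymptotics of the pleated boundaries $\partial_\pm K$, hence of $\partial_\pm\Omega$, along each peripheral geodesic and prove that they become asymptotically totally geodesic there, so that adjoining $r_\pm(c_\gamma)$ fills in exactly a BTZ cone and $\hat M$ is a manifold; this is where the fine geometry of $\Omega$ near the ends, as developed in \cite{barbtz1,barbtz2}, enters.
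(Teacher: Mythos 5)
Your overall strategy is the one the paper follows -- adjoin to $\Omega$ the spacelike geodesics fixed pointwise by the peripheral holonomies, quotient by $\pi_1(S)$, and identify the local model with the cyclic example $\hat M_0(\gamma_1,\gamma_2)$ -- but two of your steps are geometrically wrong as written. First, in the projective model $\AdS_{3,+}$ where $\Omega$, $K$ and the duality live, the peripheral isometry $(h_l(\gamma),h_r(\gamma))$ fixes pointwise exactly \emph{one} spacelike geodesic, namely the dual line $l_1$ of $c_\gamma$ (the solutions of $x\,h_r(\gamma)\,x^{-1}=h_l(\gamma)$ in $PSL(2,\R)$ form a single coset of the centralizer of $h_r(\gamma)$, i.e.\ a single geodesic; $l_1$ is reached from $c_\gamma$ by future \emph{and} past timelike geodesics of length $\pi/2$, so it is ``in the future'' and ``in the past'' of $c_\gamma$ at once). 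Both BTZ-type singularities of a given end are rays in this single line. Second, and more seriously, you adjoin whole geodesics. A point of $l_1$ lies in $\overline\Omega$ if and only if its dual plane -- a spacelike plane containing $c_\gamma$ -- is a support plane of $K$; the points of $l_1$ dual to spacelike planes through $c_\gamma$ that cut the interior of $K$ (a nonempty open subsegment of $l_1$, between the duals of the past and future faces, whenever $K$ is not contained in a single plane) have entire neighborhoods disjoint from $\Omega$. Adjoining them produces points near which your $\hat M$ is one-dimensional, so it is not a manifold. Only the two disjoint rays $r(c)=l_1\cap\partial_+\Omega$ and $r_-(c)=l_1\cap\partial_-\Omega$ can be adjoined, and this is what the paper does.

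The step you flag as ``the main obstacle'' is precisely where the paper inserts an exact, not asymptotic, input: by \cite{bks,bb_hand} the past boundary $\partial_-K$ of the convex core contains a totally geodesic \emph{face} $F$ containing the peripheral geodesic $c_\gamma$ (similarly on the future side). By duality the vertex of $r(c)$ is the point $p\in l_1$ dual to $F$, and $r(c)$ consists exactly of the duals of the support planes of $K$ through $c_\gamma$ on that side; no control of ``asymptotically totally geodesic'' behavior of the pleated boundaries is required, and one gets immediately that a neighborhood of $r(c)$ in the quotient is isometric to a neighborhood of $l_1$ in $\hat M_0(\gamma_1,\gamma_2)$, hence is a future BTZ-type singularity in the sense of Section~\ref{sub.futpast}. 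Finally, Hausdorffness and proper discontinuity after adjoining the rays are not formal consequences of the corresponding properties of the action on $\Omega$: one needs the collars $U(c)$ (unions of timelike segments from $c_\gamma$ to $r(c)$) to satisfy $a\cdot U(c)=U(aca^{-1})$ and $U(c)\cap U(d)=\varnothing$ for distinct peripheral loops, and the paper derives this disjointness from the injectivity of the normal evolution map established in \cite{bb_wick}. Without the face statement and this disjointness, your construction does not close.
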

 \begin{proof}

Let $c\in\pi_1(S)$ be a loop representing a
   boundary component of $S$ and let $\gamma_1=h_l(c)$,
   $\gamma_2=h_r(c)$.

 By hypothesis, the translation lengths of $\gamma_1$ and $\gamma_2$
 are equal, so, as in the previous example, there are two invariant 
 geodesics $l_1$ and $l_2$. Moreover
 the geodesic $l_2$ is contained in $\Omega$ and is in the boundary of
 the convex core $K$ of $\Omega$. By~\cite{bks,bb_hand}, there exists
 a face $F$ of the past boundary of $K$ that contains $l_2$. The dual
 point of such a face, say $p$, lies in $l_1$. Moreover a component of
 $l_1\setminus\{p\}$ contains points dual to some support planes of
 the convex core containing $l_2$. Thus there is a ray $r=r(c)$ in
 $l_1$ with vertex at $p$ contained in $\partial_+\Omega$ (and
 similarly there is a ray $r_-=r_-(c)$ contained in
 $l_1\cap\partial_-\Omega$).

 Now let $U(c)$ be the union of timelike segments in $\Omega$ with past
 end-point in $l_2$ and future end-point in $r(c)$. Clearly
 $U(c)\subset\Omega(\gamma_1,\gamma_2)$.
The stabilizer of $U(c)$ in $\pi_1(S)$ is the group
 generated by $(\gamma_1,\gamma_2)$. Moreover we have
 \begin{itemize}
 \item
 for some $a\in\pi_1(S)$ we have $a\cdot U(c)=U(aca^{-1})$,
 \item
 if $d$ is another peripheral loop, $U(c)\cap U(d)=\emptyset$.
 \end{itemize}
(The last property is a consequence of the fact that  the normal evolution
of $\partial_-K$ is injective -- see property (4) before Proposition \ref{pp}.)

 So if we  put
 \[
 \hat M=(\Omega\cup\bigcup r(c)\cup\bigcup r_-(c))/\pi_1(S)
 \]
 then a neighborhood of $r(c)$ in $\hat M$ is isometric to a neighborhood
 of $l_1$ in $M(\gamma_1,\gamma_2)$, and is thus a BTZ-type
 singularity (and analogously $r_-(c)$ is a white hole singularity).
 \end{proof}


 \subsection{Surgery on spacetimes containing BTZ-type singularities}

 Now we illustrate how to get spacetimes $\cong S\times\mathbb R$
 containing two particles that collide producing a BTZ-type
 singularity.  Such examples are obtained by a surgery operation
 similar to that implemented in Section~\ref{surg:sec}. The main
 difference with that case is that the boundary of these spacetimes is
 not spacelike.

 Let $M$ be a spacetime $\cong S\times\mathbb R$ containing a
 BTZ-type singularity $l$ of mass $m$ and fix a point $p\in l$. Let
 us consider a HS-surface $\Sigma$ containing a BTZ-type singularity
 $p_0$ of mass $m$ and two elliptic singularities $q_1,q_2$. A small
 disk $\Delta_0$ around $p_0$ is isomorphic to a small disk $\Delta$
 in the link of the point $p\in l$. (As in the previous section, one can construct
 such a surface by doubling a traingle in $HS^2$ with one vertex in the de Sitter region and
 two vertices in the past hyperbolic region.)

 Let $B$ be a ball around $p$ and $B_\Delta$ be the intersection of
 $B$ with the union of segments starting from $p$ with velocity in
 $\Delta$.  Clearly $B_\Delta$ embeds in $e(\Sigma)$, moreover there
 exists a small disk $\Delta_0$ around the vertex of $e(\Sigma)$ such that
 $e(\Delta_0)\cap B_0$ is isometric to the image of $B_\Delta$ in $B_0$.

 Now $\Delta'=\partial B\setminus B_\Delta$ is a disk in
 $M$. So there exists a topological surface $S_0$ in $M$ such that
 \begin{itemize}
 \item $S_0$ contains $\Delta'$;
 \item $S_0\cap B=\varnothing$;
 \item $M\setminus S_0$ is the union of two copies of $S\times\mathbb R$.
 \end{itemize}
 Notice that we do not require $S_0$ to be spacelike.

 Let $M_1$ be the component of $M\setminus S_0$ that contains
 $B$.  Consider the spacetime $\hat M$ obtained by gluing
 $M_1\setminus(B\setminus B_\Delta)$ to $B_0$ identifying $B_\Delta$
 to its image in $B_0$.
Clearly $\hat M$ contains two particles that collide giving a BH
singularity and topologically $\hat M\cong S\times\mathbb R$.

\bibliography{bibliocollision}
\bibliographystyle{alpha}

\end{document}